\numberwithin{equation}{section}
\theoremstyle{plain}
\newtheorem{theorem}{Theorem}[section]
\newtheorem{lemma}[theorem]{Lemma}
\newtheorem{corollary}[theorem]{Corollary}
\newtheorem{proposition}[theorem]{Proposition}
\theoremstyle{definition}
\newtheorem{definition}[theorem]{Definition}
\newtheorem{remark}[theorem]{Remark}
\numberwithin{equation}{section}
\numberwithin{equation}{section}
\newtheorem{conjecture}[theorem]{Conjecture}
\newcommand{\cat}[1]{\mathbf{#1}}
\DeclareMathOperator{\Bl}{Bl}
\DeclareMathOperator{\Coh}{\cat{Coh}}
\DeclareMathOperator{\Hom}{Hom}
\DeclareMathOperator{\id}{Id}
\DeclareMathOperator{\Per}{\cat{Per}}
\DeclareMathOperator{\Spec}{Spec}
\DeclareMathOperator{\cEnd}{\mathcal{E}nd}
\DeclareMathOperator{\cHom}{\mathcal{H}om}
\DeclareMathOperator{\ch}{ch}
\DeclareMathOperator{\coker}{coker}
\DeclareMathOperator{\cone}{cone}
\DeclareMathOperator{\ext}{ext}
\newcommand{\cA}{\mathcal{A}}
\newcommand{\cB}{\mathcal{B}}
\newcommand{\cC}{\mathcal{C}}
\newcommand{\cD}{\mathcal{D}}
\newcommand{\cE}{\mathcal{E}}
\newcommand{\cF}{\mathcal{F}}
\newcommand{\cG}{\mathcal{G}}
\newcommand{\cH}{\mathcal{H}}
\newcommand{\cN}{\mathcal{N}}
\newcommand{\cO}{\mathcal{O}}
\newcommand{\cT}{\mathcal{T}}
\newcommand{\cV}{\mathcal{V}}
\newcommand{\CC}{\mathbb{C}}
\newcommand{\PP}{\mathbb{P}}
\newcommand{\QQ}{\mathbb{Q}}
\newcommand{\RR}{\mathbb{R}}
\newcommand{\ZZ}{\mathbb{Z}}
\begin{document}
\title{Stability conditions on blowups}
\author{Nantao Zhang}
\address{Department of Mathematical Sciences, Tsinghua University, 100084 Beijing, China}
\email{znt21@mails.tsinghua.edu.cn}

\date{}
\keywords{Bridgeland stability, birational morphism, semiorthogonal decomposition}
\subjclass{14F08, 14E30, 18G80}

\begin{abstract}
  We study the relation between perverse stability conditions and geometric stability conditions under blow up. We confirm a conjecture of Toda in some special cases and show that geometric stability conditions can be induced from perverse stability conditions from semiorthogonal decompositions associated to blowups.
\end{abstract}

\maketitle
\tableofcontents

\section{Introduction}
\label{sec:intro}

Bridgeland stability conditions on triangulated categories introduced by Bridgeland in \cite{Bridgeland2007} are a powerful tool for the study of moduli spaces of stable sheaves. Moreover, they are connected to birational geometry as suggested in \cite{Toda2013b} and more recently from a different perspective in \cite{Halpern-Leistner2024,HJR2024}. In this paper, we focus on a special kind of birational map, the blow up \(f: X = \Bl_{C}Y \to Y\), and try to explore the relationship of the stability conditions on \(X\) and on \(Y\). In \cite{Bridgeland2008,AB2012,BMT2014}, the authors gave a systematic method to construct stability condition on surfaces and threefolds (conjecturally) by tilting from abelian category of coherent sheaves, which we call geometric stability conditions. In \cite{Toda2013b}, the author constructed a new family of Bridgeland stability condition associated to extremal contraction (also conjecturally in threefold case) also by tilting but from the abelian category of perverse coherent sheaves, which we call perverse stability conditions.

On the other hand, there is a standard semiorthogonal decomposition associated to blow up of a smooth center
\[D^{b}\Coh(X) = \langle Lf^{*}D^{b}\Coh(Y), \Phi_{0}D^{b}\Coh(C, \cdots, \Phi_{c - 2}D^{b}\Coh(C))\rangle\]
where \(\Phi_{i}\) are fully faithful admissible embeddings and \(c\) is the codimension of \(C\) in \(Y\). In \cite{BLM+2023}, the authors proposed a general method to induce stability conditions satisfying some specific conditions from a whole triangulated category to its semiorthogonal components complementary to exceptional collections. In this paper, we apply their construction to the above semiorthogonal decomposition in the case of of blowing up a point, and derive a similar approach to deal with the case of blowing up a curve. Our main result is the followings,
\begin{theorem}(\cref{thm:2}, \cref{thm:4})
  The perverse stability condition on \(D^{b}\Coh(X)\) induces geometric stability condition on \(D^{b}\Coh(Y)\).
\end{theorem}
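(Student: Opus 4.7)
My plan is to start from Toda's perverse stability condition $\sigma_X$ on $D^{b}\Coh(X)$ and descend it along the fully faithful embedding $Lf^{*}\colon D^{b}\Coh(Y)\hookrightarrow D^{b}\Coh(X)$, using the BLM+ induction theorem (and a variant of it suitable for the curve-blowup case). First I would unpack $\sigma_X$: write the perverse heart $\Per(X/Y)$ as a tilt of $\Coh(X)$ along a torsion pair determined by the contraction $f$, identify the further tilted heart $\cA_X\subset D^{b}\Coh(X)$ and the central charge $Z_X$ that together form $\sigma_X$, and write the complementary SOD components $\Phi_i D^{b}\Coh(C)$ explicitly as twisted pushforwards from the exceptional divisor $E\subset X$.

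The technical core of the argument is verifying the BLM+ hypothesis that each complementary SOD component sits in a narrow phase window for $\sigma_X$. In the point-blowup case, $\Phi_i D^{b}\Coh(C)$ is the triangulated subcategory generated by the single exceptional object $\cO_E(-i)$, so I would show directly that each $\cO_E(-i)$ is $\sigma_X$-semistable of a known phase by computing its perverse cohomology together with its image in the second tilt $\cA_X$. In the curve-blowup case, where $\Phi_i D^{b}\Coh(C)$ is no longer generated by a single exceptional object, I would instead prove a uniform statement: the whole subcategory $\Phi_i \Coh(C)$ sits inside $\cA_X[k]$ for a fixed integer $k=k(i)$. This should follow from the explicit description of the perverse coherent sheaves $i_{*}(\cO_E(-i)\otimes \pi^{*}\cF)$ for $\cF\in\Coh(C)$ as bona fide objects of $\Per(X/Y)$, together with tracking how the second tilt interacts with this abelian subcategory.

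Once the phase-window hypothesis is in place, the BLM+ induction (or its curve-case variant) yields a stability condition $\sigma_Y$ on $Lf^{*}D^{b}\Coh(Y)\cong D^{b}\Coh(Y)$. To conclude that $\sigma_Y$ is \emph{geometric} I would proceed in two parts. First, intersect the induced heart with $Lf^{*}D^{b}\Coh(Y)$ and push it forward along $Rf_{*}$ to identify it with a standard double tilt of $\Coh(Y)$. Second, compute the induced central charge $Z_Y = Z_X\circ Lf^{*}$ via the projection formula, using $Rf_{*}Lf^{*}=\id$ and the standard formulas for $Rf_{*}$ applied to twisted exceptional line bundles, and check that it takes the expected form in terms of twisted Chern characters on $Y$.

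The main obstacle is the phase-window estimate in the curve-blowup case. The standard BLM+ framework assumes the complementary SOD pieces are generated by exceptional objects, which fails when the blowup center has positive dimension; one would need to develop a variant of the induction theorem and verify its hypotheses for $\Phi_i D^{b}\Coh(C)$ against $\sigma_X$. A secondary but nontrivial issue is propagating the support property from $\sigma_X$ to $\sigma_Y$: this should reduce to an orthogonality argument combined with a Cauchy--Schwarz-type inequality for the Mukai-type quadratic form, but keeping track of the quadratic form through $Lf^{*}$ and the perverse tilt will need some care.
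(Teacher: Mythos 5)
Your overall architecture matches the paper's: apply the BLM+ induction to the blowup semiorthogonal decomposition in the point case, develop a variant for the curve case, then identify the induced heart with the double tilt of $\Coh(Y)$ (via $Rf_*$ of semistable objects and matching of torsion pairs) and the induced central charge via the projection formula. The point-blowup outline is essentially complete modulo computation.

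However, there is a genuine gap exactly where you flag ``the main obstacle'': you do not supply the curve-case variant of the induction theorem, and the replacement you sketch --- a ``phase-window'' statement $\Phi_i\Coh(C)\subset\cA_X[k]$ --- is not by itself sufficient. The paper's resolution is not a phase-window argument but an adjoint-functor argument: the decisive hypothesis is that the right adjoint $j^!=Rf_*$ of the inclusion $j_*=Lf^*$ satisfies $j^!\cA\subset\cA\cap\cD_1$ together with $j^!j_*=\id$. This condition (which for $\Per(X/Y)$ is essentially built into the definition, since $Rf_*E\in\Coh(Y)$ for perverse sheaves, and then has to be propagated through both tilts) is what replaces the exceptional-collection machinery: applying $j^!$ to a filtration of $j_*E$ with factors in $\cA[k_i]$ produces the required filtration on $\cD_1$ (hence the induced bounded t-structure), and applying $j^!$ to a destabilizing quotient of $j_*E$ transfers semistability and the support property. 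Your proposed uniform statement about $\Phi_i\Coh(C)$ corresponds only to the auxiliary conditions of the paper's induction proposition (that $\cA\cap\cD_2$ is a heart, that its Serre twist lies in $\cA$ up to a fixed shift, and that $Z$ is controlled there); without the condition $j^!\cA\subset\cA\cap\cD_1$ you have no mechanism to produce the t-structure on $Lf^*D^b\Coh(Y)$ or the HN filtrations there. A secondary omission, common to both cases, is the verification that $Lf^*\Coh(Y)\subset\Per(X/Y)$ (in the surface case this requires an explicit check, e.g.\ that $Lf^*$ of a sheaf has cohomology only in degrees $0,-1$ together with the vanishing $R^1f_*\cH^0=f_*\cH^{-1}=0$ and the $\Hom$-vanishing against $\cO_C(-1)$); you assume this identification implicitly when you ``push forward along $Rf_*$.''
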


Along the way, we show the following result.
\begin{theorem}(\cref{thm:3})
  The generalized Bogomolov inequality for perverse tilted stability of \(X\) in the threefold case can be induced from the generalized Bogomolov inequality for geometric tilted stability of \(Y\).
\end{theorem}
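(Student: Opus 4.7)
The plan is to prove the generalized Bogomolov inequality for a perverse tilt-semistable object $E \in D^{b}\Coh(X)$ by reducing it to the assumed generalized Bogomolov inequality for $Rf_{*}E$ on $Y$. The starting point is that the perverse heart ${}^{p}\Coh(X) \subset D^{b}\Coh(X)$ is built so that $Rf_{*}$ is $t$-exact onto $\Coh(Y)$, and this exactness should propagate through the double-tilt procedure defining tilt stability.

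First, I would fix compatible parameters: take $\omega_{X}$ and $B_{X}$ as small perturbations, by the exceptional divisor, of the pullbacks of $\omega_{Y}$ and $B_{Y}$, following Toda's prescription in \cite{Toda2013b}. Using the projection formula together with the Grothendieck--Riemann--Roch formula for blowups, I would compare $\ch^{B_{X}}(E)$ and $\ch^{B_{Y}}(Rf_{*}E)$ component by component; the difference is supported on the exceptional locus and is expressed via the Chern classes of the relative tangent bundle and of $C \subset Y$. This yields an explicit identity relating the Bogomolov discriminants $Q_{\omega_{X}, B_{X}}(E)$ and $Q_{\omega_{Y}, B_{Y}}(Rf_{*}E)$.

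Next I would verify that $Rf_{*}$ sends the perverse tilted heart $\cB^{p}_{\omega_{X}, B_{X}}$ on $X$ into the geometric tilted heart $\cB_{\omega_{Y}, B_{Y}}$ on $Y$, and that perverse tilt-semistability of $E$ forces tilt-semistability of $Rf_{*}E$ with a matching slope. The main input is that the orthogonal summands $\Phi_{i}D^{b}\Coh(C)$ of the semiorthogonal decomposition recalled in the introduction lie in $\ker Rf_{*}$, so any Harder--Narasimhan filtration of $E$ descends through $Rf_{*}$ and its quotients control the tilt slope on $Y$. Applying the assumed BMT inequality on $Y$ to $Rf_{*}E$ and substituting the Chern character comparison from the previous step then delivers the desired inequality on $X$.

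The main obstacle is this second step, namely the compatibility of the two double-tilt procedures under $Rf_{*}$ and the descent of semistability. Because tilted hearts depend sensitively on slope and the exceptional perturbation of $\omega_{X}$ can shift objects across the torsion pair defining the second tilt, one must argue that every HN factor of $Rf_{*}E$ on $Y$ either lifts to a perverse tilt-semistable factor of $E$ or is absorbed into an explicit exceptional correction. A secondary point is checking that the correction term relating the two Bogomolov discriminants has the correct sign to yield a genuine inequality on $X$; this should reduce to a positivity of intersection numbers on the exceptional locus, governed by the $\PP^{c-1}$-bundle structure of the exceptional divisor over $C$.
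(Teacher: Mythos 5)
Your high-level reduction --- push $E$ forward to $Y$, apply the assumed inequality to $Rf_{*}E$, and control an exceptional correction term --- is the right one, and it matches the paper's strategy of combining Toda's criterion (\cref{prop:4}) with a comparison lemma (\cref{lem:9}). But the two steps you flag as ``obstacles'' are precisely the content of the proof, and the mechanisms you propose for them do not work. First, the sign of the correction: a Grothendieck--Riemann--Roch comparison of $\ch(E)$ with $\ch(Rf_{*}E)$ produces a correction involving $\ch_{1}(E)$ and $\ch_{2}(E)$ paired with classes on the exceptional divisor, and for a general object of the tilted heart this has no definite sign; no positivity of intersection numbers on the $\PP^{c-1}$-bundle will rescue it. The paper instead uses the canonical triangle $Lf^{*}Rf_{*}E \to E \to G$ coming from the mutated semiorthogonal decomposition $\langle S(\cD_{2}), Lf^{*}D^{b}\Coh(Y)\rangle$, and exploits that $E$ is \emph{stable} of slope $0$: since $(f^{*}\omega)^{3-i}\ch_{i}$ vanishes on $S(\cD_{2})$ for $i \leq 2$, stability forces $H^{0}_{\cB_{f^{*}\omega}}(G)=0$ and $E \cong Lf^{*}Rf_{*}E / H^{-1}_{\cB_{f^{*}\omega}}(G)$, so the correction is exactly $-\ch_{3}(H^{-1}_{\cB_{f^{*}\omega}}(G))$ with $H^{-1}_{\cB_{f^{*}\omega}}(G)$ lying in the explicit finite-length category $S(\cA_{2})[k]$, whose objects have $\ch_{3}\geq 0$. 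This is a categorical argument about which objects can occur as the correction, not a numerical one about the blowup geometry. Second, the descent of semistability is likewise not proved by lifting HN factors; it is \cref{lem:6}, which shows directly that $j^{!}E = Rf_{*}E$ is semistable whenever $E$ is, again using that the central charge kills $S(\cA_{2})$.

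Two smaller inaccuracies: the perverse central charge $Z_{f^{*}\omega}$ uses the nef class $f^{*}\omega$ itself (only $B$ is allowed to be a multiple of $D$, and \cref{thm:3} is stated at $B=0$); replacing $f^{*}\omega$ by an ample perturbation $f^{*}\omega - \epsilon D$ changes the problem into one the paper explicitly cannot yet handle (see the final remark of \cref{sec:threefold-case-1}). And the components $\Phi_{i}D^{b}\Coh(C)$ are \emph{not} contained in $\ker Rf_{*}$ --- indeed $Rf_{*}\Phi_{0}F = i_{C*}F$; the subcategory killed by $Rf_{*}$ is the right orthogonal $(Lf^{*}D^{b}\Coh(Y))^{\perp}$, which is why the proof must first mutate $\cD_{2}$ to the left via the Serre functor before decomposing $E$.
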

The generalized Bogomolov inequality for geometric tilted stability has been confirmed in many cases, for example, Fano varieties of Picard rank 1 in \cite{Li2018} (with \(\PP^{3}\) worked out earlier in \cite{Macri2014}), abelian threefolds \cite{BMS2016}, and quintic threefolds \cite{Li2019}. Therefore, our results confirm Toda's conjecture in these cases. It is worth noting, that the generalized Bogomolov inequality in this paper refers to the original one introduced in \cite{BMT2014}, which has counterexamples, for example \cite{Schmidt2017}. So our result can not apply directly to variants of generalized Bogomolov inequalities for example in \cite{BMS+2017}.

\subsubsection*{Plan of the paper}
In \cref{sec:revi-tilt-bridg}, we recall some background on Bridgeland stability conditions and the tilting process to construct stability condition. In \cref{sec:induc-t-struct}, we recall semiorthogonal decompositions and explain how to induce stability conditions on semiorthogonal components following \cite{BLM+2023}. In \cref{sec:surface-case}, we discuss the relation between perverse stability condition and geometric stability condition in the surface case. And in~\cref{sec:threefold-case-1}, we extend our results to threefold case.

\subsubsection*{Conventions}
In this paper, we work over \(\CC\) or an algebraic closed field \(k\) of characteristic zero. For morphism \(f: X \to Y\), we use \(f_{*}, f^{*}\) to denote the underived pushforward and pullback and use \(Lf^{*}, Rf_{*}\) to denote the derived version. Also, we distinguish \(\langle \bullet \rangle\) the smallest triangulated closure in triangulated category and \(\langle \bullet \rangle_{\ext}\) generation only by extension in triangulated category.

\subsubsection*{Acknowledgements}

The author is very grateful to his supervisor Will Donovan to continuous encouragement and helpful discussions during the work and to Yukinobu Toda for hospitality during my visit in Kavli IPMU and many useful discussions on the topic. The author would also like to thank Linpu Gao for valuable help in discussion on bounded t-structures. The author is also indebted to Dongjian Wu and Tianle Mao for various related conversations. This research is supported by Yau Mathematical Science Center.

\section{Review on tilting and Bridgeland stability}
\label{sec:revi-tilt-bridg}

We begin by a quick review on tilting and some weak form of Bridgeland stability conditions. We follow the notation of \cite{BLM+2023}. Let \(\cD\) be a triangulated category and let \(K(\cD)\) denote the Grothendieck group of \(\cD\). Fix a finite rank lattice \(\Lambda\) and a surjective homomorphism \(v: K(\cD) \twoheadrightarrow \Lambda\).

\subsection*{T-structures}

\begin{definition}(\cite[Section 1.3]{BBD+2018}) A \textit{t-structure} on a triangulated category \(\cD\) consists of a pair of two full subcategories \((\cD^{\leq 0}, \cD^{\geq 0})\) such that
  \begin{enumerate}
    \item For \(X\) in \(\cD^{\leq 0}\) and \(Y \in \cD^{\geq 1}\), we have \(\Hom(X, Y) = 0\).
    \item We have \(\cD^{\leq 0} \subset \cD^{\leq 1}\) and \(\cD^{\geq 0} \supset \cD^{\geq 1}\).
    \item For every \(X\) in \(\cD\), there exists a distinguished triangle
    \[A \to X \to B \xrightarrow{ [1]}\]
    such that \(A \in \cD^{\leq 0}\) and \(B \in \cD^{\geq 1}\).
  \end{enumerate}
  Here \(\cD^{\leq n}:= \cD^{\leq 0}[-n], \cD^{\geq n}:= \cD^{\geq 0}[-n]\). The t-structure is called bounded if for every \(E \in \cD\), we have \(E \in \cD^{\geq m_{1}}\) and \(E \in \cD^{\leq m_{2}}\) for some \(m_{1}, m_{2} \in \ZZ\).

  If \((\cD^{\leq 0}, \cD^{\geq 0})\) is a t-structure, then \(\cA := \cD^{\leq 0} \cap \cD^{\geq 0}\) is an abelian category called \textit{heart} \cite[Théorème 1.3.6]{BBD+2018}.

  The inclusion map \(\cD^{\leq n} \to \cD\) admits a right adjoint \(\tau_{\leq n}\) and \(\cD^{\geq n} \to \cD\) admits a left adjoint \(\tau_{\geq n}\). And we denote by \(H_{\cA}^{i} := \tau_{\geq i} \tau_{\leq i} : \cD \to \cA\) the cohomological functor with respect to t-structure \cite[Proposition 1.3.3(i), Théorème 1.3.6]{BBD+2018}.
\end{definition}

Now we recall some useful facts about t-structures.

\begin{lemma}(\cite[Lemma 3.2]{Bridgeland2007})
  \label{lem:1}
  Let \(\cA \subset \cD\) be a full additive subcategory of a triangulated category \(\cD\). Then \(\cA\) is the heart of a bounded t-structure on \(\cD\) if and only following two conditions hold
  \begin{enumerate}
    \item for \(E, F \in \cA\) and \(k < 0\), we have \(\Hom_{\cD}(E, F[k]) = 0\), and
    \item for every \(E \in \cD\), there exists a sequence of morphisms
    \[0 = E_{0} \xrightarrow{\phi_{1}} E_{1} \to \cdots \xrightarrow{\phi_{m}} E_{m} = E\]
    such that \(\cone(\phi_{i}) \in \cA[k_{i}]\) for \(k_{1} > k_{2} > \cdots > k_{m}\).
  \end{enumerate}
\end{lemma}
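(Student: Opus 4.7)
The plan is to prove the two directions of the biconditional separately. For the necessity direction, suppose $\cA$ is the heart of a bounded t-structure $(\cD^{\leq 0}, \cD^{\geq 0})$. Condition (1) is immediate: for $E, F \in \cA$ and $k < 0$, the object $F[k]$ lies in $\cD^{\geq -k} \subset \cD^{\geq 1}$ while $E \in \cD^{\leq 0}$, so $\Hom(E, F[k]) = 0$ by the orthogonality axiom. For condition (2), we use boundedness to place $E \in \cD^{\leq b} \cap \cD^{\geq a}$ and then iteratively apply the truncation functors $\tau_{\leq i}$, reading off the triangles $\tau_{\leq i-1}E \to \tau_{\leq i}E \to H^i_\cA(E)[-i] \xrightarrow{[1]}$ for $i = a, a+1, \ldots, b$ to assemble a Postnikov tower whose successive cones lie in $\cA[-i]$, giving strictly decreasing indices $k_j = -i$.

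For the sufficiency direction, assuming (1) and (2), we construct a candidate t-structure by setting
\[\cD^{\leq 0} := \{E \in \cD : E \text{ admits a filtration as in (2) with all } k_i \geq 0\},\]
\[\cD^{\geq 0} := \{E \in \cD : E \text{ admits a filtration as in (2) with all } k_i \leq 0\}.\]
The containments $\cD^{\leq 0} \subset \cD^{\leq 1}$ and $\cD^{\geq 0} \supset \cD^{\geq 1}$ follow at once by relaxing the indexing bound. Given any $E$ with filtration indexed by $k_1 > \cdots > k_m$, we split at the index $j$ separating non-negative from negative $k_i$ (with $j = m$ or $j = 0$ in the degenerate cases). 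The resulting triangle $E_j \to E \to E/E_j$ has $E_j \in \cD^{\leq 0}$ by construction, while $E/E_j$ inherits a filtration with successive cones in $\cA[k_{j+1}], \ldots, \cA[k_m]$, all with negative indices, so $E/E_j \in \cD^{\geq 1}$, providing the truncation triangle required by axiom (3).

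The main obstacle is the orthogonality axiom $\Hom(\cD^{\leq 0}, \cD^{\geq 1}) = 0$. We argue by induction on the lengths of the filtrations of $E \in \cD^{\leq 0}$ and $F \in \cD^{\geq 1}$: chasing the long exact sequences coming from the distinguished triangles of the two filtrations reduces everything to the vanishing of $\Hom(A[k], B[k']) = \Hom_\cD(A, B[k'-k])$ for $A, B \in \cA$, $k \geq 0$, $k' \leq -1$, so $k' - k < 0$ — precisely hypothesis (1). Once orthogonality is established, the standard argument upgrades it to uniqueness of the decomposition triangle (and of the filtration in (2)), the heart $\cD^{\leq 0} \cap \cD^{\geq 0}$ is identified with $\cA$ (a filtration forced to have every $k_i = 0$ collapses to a single step whose cone lies in $\cA$), and boundedness of the resulting t-structure follows at once from the finiteness of $m$ in (2).
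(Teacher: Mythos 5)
Your argument is correct and is essentially the standard proof of this lemma: the paper does not prove it but cites it from Bridgeland, where the same strategy appears --- truncation triangles give the Postnikov tower in one direction, and in the other direction one defines \(\cD^{\leq 0}\) and \(\cD^{\geq 0}\) by the sign of the allowed shifts, checks the three axioms (with orthogonality reduced by d\'evissage to hypothesis (1)), and identifies the heart via uniqueness of the filtration. The only points you compress --- the octahedral-axiom step producing the induced filtration on \(E/E_{j}\), and the uniqueness argument showing \(\cD^{\leq 0}\cap\cD^{\geq 0}\subseteq\cA\) --- are standard and correctly flagged, so there is no gap.
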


In above sequence, we have \(H^{-k_{i}}_{\cA}(E) = \cone(\phi_{i})\).

\begin{lemma}
  \label{lem:2}
  Let \(\cA_{1}, \cA_{2} \subset \cD\) two hearts of bounded t-structures on \(\cD\). If \(\cA_{1} \subset \cA_{2}\), then we have \(\cA_{1} = \cA_{2}\).
\end{lemma}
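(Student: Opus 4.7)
The plan is to show that the inclusion of hearts forces the two bounded t-structures themselves to coincide, from which $\cA_1 = \cA_2$ follows by intersecting the aisles. Writing $(\cD^{\leq 0}_i, \cD^{\geq 0}_i)$ for the t-structure with heart $\cA_i$, it suffices to establish $\cD^{\leq 0}_1 = \cD^{\leq 0}_2$ and $\cD^{\geq 0}_1 = \cD^{\geq 0}_2$.

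The forward inclusions follow immediately from the hypothesis: by \cref{lem:1}, any $X \in \cD^{\leq 0}_1$ has a finite filtration whose associated graded pieces lie in $\cA_1[k] \subset \cA_2[k] \subset \cD^{\leq -k}_2$ for $k \geq 0$, hence in $\cD^{\leq 0}_2$, and $\cD^{\leq 0}_2$ is closed under extensions. The inclusion $\cD^{\geq 0}_1 \subset \cD^{\geq 0}_2$ is obtained by the same argument with $k \leq 0$.

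For the reverse inclusion, take $E \in \cD^{\leq 0}_2$ and consider its $\cA_1$-truncation triangle
\[
\tau^1_{\leq 0} E \to E \to \tau^1_{\geq 1} E \xrightarrow{[1]}.
\]
By the inclusions just shown, $\tau^1_{\geq 1} E \in \cD^{\geq 1}_1 \subset \cD^{\geq 1}_2$, so the morphism $E \to \tau^1_{\geq 1} E$ must vanish by the orthogonality axiom of the t-structure underlying $\cA_2$. A short diagram chase with the long exact $\Hom$-sequence shows that in such a triangle the third morphism is split mono, yielding a splitting $\tau^1_{\leq 0} E \cong E \oplus \tau^1_{\geq 1} E[-1]$. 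Hence $\tau^1_{\geq 1} E[-1]$ is a direct summand of $\tau^1_{\leq 0} E \in \cD^{\leq 0}_2$, i.e.\ $\tau^1_{\geq 1} E \in \cD^{\leq -1}_2$. Combined with $\tau^1_{\geq 1} E \in \cD^{\geq 1}_2$ this forces $\tau^1_{\geq 1} E = 0$, so $E \in \cD^{\leq 0}_1$. Running the symmetric argument on the triangle $\tau^1_{\leq -1} E \to E \to \tau^1_{\geq 0} E \xrightarrow{[1]}$ yields $\cD^{\geq 0}_2 \subset \cD^{\geq 0}_1$.

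I do not anticipate a substantive obstacle: the whole argument is bookkeeping with the orthogonality axioms of the two t-structures, and the only minor technicality is the splitting observation for a distinguished triangle one of whose morphisms vanishes, which is a standard consequence of the long exact $\Hom$-sequence.
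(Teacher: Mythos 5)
Your proof is correct and follows essentially the same route as the paper's: both first deduce the inclusions of aisles \(\cD_{1}^{\leq 0} \subset \cD_{2}^{\leq 0}\) and \(\cD_{1}^{\geq 0} \subset \cD_{2}^{\geq 0}\) from boundedness and extension-closure, and then reverse one of them using orthogonality. Your truncation-triangle argument for the reverse inclusion is just an unpacked version of the paper's one-line observation that the co-aisle is the orthogonal of the aisle, so the inclusion of aisles forces the reverse inclusion of co-aisles.
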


\begin{proof}
  Since \(\cA_{1} \subset \cA_{2}\), we have that \(\cA_{1}[k] \subset \cA_{2}[k]\) and \(\cA_{1}^{ [1, k]} \subset \cA_{2}^{ [1, k]}\). Since \(\cA_{1}\) and \(\cA_{2}\) are bounded, we have \(\cA_{1}^{ \geq 1} \subset \cA_{2}^{\geq 1}\). Similarly, we have \(\cA_{1}^{\leq 0} \subset \cA_{2}^{\leq 0}\). But we know that \(\cA_{1}^{\geq 1} = {}^{\perp}(\cA_{1}^{\leq 0})\) \cite[Remark 1.2.1.3]{Lurie2017}. So we must have \(\cA_{1}^{\geq 1} = \cA_{2}^{\geq 1}\), \(\cA_{1}^{\geq 0} = \cA_{2}^{\geq 0}\) and therefore \(\cA_{1} = \cA_{2}\).
\end{proof}

To avoid confusion, we explain some notations about heart and triangulated categories. Given \(\cA\) the heart of bounded t-structure on \(\cD\) and a morphism \(\phi: E \to F \in \cA\), we have \(\ker \phi := H^{-1}_{\cA}(\cone(\phi))\) and \(\coker \phi := H^{0}_{\cA}(\cone(\phi))\). Furthermore, we write \(E \subset F\) if there is a morphism \(\phi: E \to F\) such that \(\ker \phi = 0\) and write \(F/E = \coker \phi\). For \(E,F \subset G\), we write \(E \cap G := \ker (E \oplus F \to G)\).

\subsection*{Bridgeland stability conditions}

A (weak) Bridgeland stability condition is a pair of  a bounded t-structure and a (weak) stability function.
\begin{definition}
  Let \(\cA\) be an abelian category. Then we call a group homomorphism \(Z: K(\cA) \to \CC\) a \textit{weak stability function} on \(\cA\) if for every \(E \in \cA\), we have \(\Im Z(E) \geq 0\), and if \(\Im Z(E) = 0\), we have \(\Re Z(E) \leq 0\). If moreover, for every \(0 \neq E \in \cA\) such that \(\Im Z(E) = 0\), we have \(\Re Z(E) < 0\), we say that \(Z\) is a \textit{stability function} on \(\cA\).

  Given a weak stability function \(Z\), we have a \textit{slope} \(\mu\) for every \(E \in \cA\)
  \[\mu_{Z}(E) :=
    \begin{cases}
      - \frac{\Re Z(E)}{\Im Z(E)} & \text{if } \Im Z(E) > 0 \\
      + \infty & \text{otherwise} \\
    \end{cases}
  \]
  An object \(0 \neq E \in \cA\) is \(\mu_{Z}\)-\textit{semistable} if for every proper subobject \(F\), we have \(\mu_{\sigma}(F) \leq \mu_{\sigma}(E)\). And if moreover the inequality is strict, we call \(E\) \(\mu_{Z}\)-\textit{stable}.
\end{definition}

\begin{definition}
  A \textit{weak stability condition} on \(\cD\) is a pair \(\sigma = (\cA, Z)\) where \(\cA\) is the heart of a bounded t-structure on \(\cD\) and \(Z: \Lambda \to \CC\) is a group homomorphism, such that
  \begin{enumerate}
    \item The composition \(K(\cA) = K(\cD) \xrightarrow{v} \Lambda \xrightarrow{Z} \CC\) is a weak stability function on \(\cA\). And \(E \in \cA\) is called \(\sigma\)-(semi)stable if it is \(\mu_{Z}\)-(semi)stable. We write write \(\mu_{\sigma} := \mu_{Z}\).
    \item (HN filtration) We require any object \(E\) of \(\cA\) to have a \textit{Harder-Narasimhan filtration} (HN filtration in short), that is an exact sequence in \(\cA\)
    \[0 = E_{0} \to E_{1} \to \cdots \to E_{m} = E\]
    such that \(F_{i} := E_{i}/E_{i - 1}\) is \(\sigma\)-semistable and \(\mu_{\sigma}(F_{1}) > \mu_{\sigma}(F_{2}) > \cdots > \mu_{\sigma}(F_{m})\). And we define \(\phi_{\sigma}^{+}(E) = \mu_{\sigma}(F_{1})\) and \(\phi_{\sigma}^{-}(E) = \mu_{\sigma}(F_{m})\).
    \item (Support condition) There exists a constant \(C > 0\) such that for every \(\sigma\)-semistable \(0 \neq E \in \cA\), we have
    \[\frac{\abs{Z(E)}}{\norm{v([E])}} \geq C\]
    where \(\norm{\cdot}\) is some fixed norm on \(\Lambda\).
  \end{enumerate}
\end{definition}

The following equivalent description of the support condition is useful in geometric settings.
\begin{lemma}(\cite[Lemma 11.4]{BMS2016})
  \label{lem:4}
  The support condition is equivalent to the existence of a quadratic form \(Q\) on \(\Lambda_{\RR}\) such that \(Q|_{\ker Z}\) is negative definite and for any \(\sigma\)-semistable object \(E \in D^{b}(X)\), we have
  \[Q(v(E)) \geq 0.\]
\end{lemma}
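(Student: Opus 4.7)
The plan is to establish the equivalence by proving each direction separately, using an explicit construction in one direction and a compactness argument on the unit sphere of $\Lambda_{\RR}$ in the other. I would begin by fixing, without loss of generality, an inner product on $\Lambda_{\RR}$ inducing the chosen norm $\norm{\cdot}$; any two norms on this finite-dimensional space are equivalent, so the support inequality is insensitive to the choice.

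For the implication from the support condition to the existence of $Q$, my candidate is
\[Q(v) := \abs{Z(v)}^{2} - C^{2}\norm{v}^{2},\]
where $C$ is the constant given by the support condition. This is a real quadratic form, since $\abs{Z(v)}^{2} = (\Re Z(v))^{2} + (\Im Z(v))^{2}$. On $\ker Z$ it reduces to $-C^{2}\norm{v}^{2}$, which is negative definite, and for any $\sigma$-semistable object $E$ the inequality $\abs{Z(E)} \geq C \norm{v([E])}$ rearranges to $Q(v([E])) \geq 0$.

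For the converse, I would pass to the unit sphere $S = \{v \in \Lambda_{\RR} : \norm{v} = 1\}$. Since $Q$ is negative definite on $\ker Z$, continuity of $Q$ provides an open neighbourhood $U \subset S$ of $A := S \cap \ker Z$ on which $Q < 0$. The complement $S \setminus U$ is compact and disjoint from $\ker Z$, so the continuous function $v \mapsto \abs{Z(v)}$ attains a positive minimum $C$ on it. For any $\sigma$-semistable $E$, the scaled class $v([E])/\norm{v([E])}$ satisfies $Q \geq 0$ by hypothesis, so it must lie in $S \setminus U$, which yields $\abs{Z([E])}/\norm{v([E])} \geq C$, exactly the support condition.

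The main obstacle I anticipate is the uniform separation of the loci $\{Q \geq 0\}$ and $\ker Z$ on $S$: the semistable classes are a priori only a discrete subset of $\Lambda$, and one needs compactness of $S$ together with continuity of both $Q$ and $Z$ to extract a single constant $C$ that works for all of them at once. The argument is purely linear-algebraic and topological once the quadratic form has been chosen, and in particular uses nothing about the categorical structure of $\cA$ beyond the defining inequality $Q(v([E])) \geq 0$ for semistables.
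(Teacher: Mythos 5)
Your argument is correct and is exactly the standard proof of this equivalence; the paper itself gives no proof, simply citing \cite[Lemma 11.4]{BMS2016}, and the argument there is the same one you give (the explicit form $Q(v)=\abs{Z(v)}^{2}-C^{2}\norm{v}^{2}$ after replacing $\norm{\cdot}$ by an equivalent Euclidean norm in one direction, and the compactness/uniform-separation argument on the unit sphere in the other). No gaps.
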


\subsection*{Tilting}
Tilting is a useful method to obtain new hearts of bounded t-structure from existing ones.

\begin{definition}(\cite{HRS1996})
  Let \(\cA\) be the heart of a bounded t-structure on a triangulated category \(\cD\). A \textit{torsion pair} is a pair of subcategories \((\cT, \cF)\) in \(\cA\) such that
  \begin{enumerate}
    \item For any \(T \in \cT\), and \(F \in \cF\), we have \(\Hom(T, F) = 0\).
    \item For any \(E \in \cA\), there is an exact sequence \(0 \to T \to E \to F \to 0\) in \(\cA\) with \(T \in \cT\) and \(F \in \cF\).
  \end{enumerate}

  We define the \textit{tilted heart} \(\cA^{\dagger} = \langle \cF[1], \cT \rangle_{\ext} \subset \cD\).
\end{definition}

The following proposition justifies the name.
\begin{proposition}(\cite{HRS1996})
  The category \(\cA^{\dagger}\) is the heart of a bounded t-structure on \(\cD\).
\end{proposition}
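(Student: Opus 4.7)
The plan is to verify the two conditions of \cref{lem:1} for $\cA^{\dagger}$, namely vanishing of negative $\Hom$s and existence of a $\cA^{\dagger}$-filtration for every object of $\cD$.

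For the first condition, take $E, F \in \cA^{\dagger}$ and $k < 0$. By definition of extension closure, there exist triangles $T_{E} \to E \to F_{E}[1]$ and $T_{F} \to F \to F_{F}[1]$ with $T_{E}, T_{F} \in \cT$ and $F_{E}, F_{F} \in \cF$. A standard long exact sequence argument reduces $\Hom(E, F[k]) = 0$ to the four vanishings $\Hom(T_{E}, T_{F}[k])$, $\Hom(T_{E}, F_{F}[k+1])$, $\Hom(F_{E}[1], T_{F}[k])$, $\Hom(F_{E}[1], F_{F}[k+1])$. The first, third and fourth are zero because $\cT, \cF \subset \cA$ and $\cA$ is the heart of a bounded t-structure and the relevant shifts are strictly negative. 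The only borderline case is $k = -1$ in the second term, which gives $\Hom(T_{E}, F_{F}) = 0$ by the defining property of the torsion pair.

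For the second condition, start from a filtration of $E \in \cD$ as in \cref{lem:1} with respect to $\cA$, so that $\cone(\phi_{i}) = A_{i}[k_{i}]$ with $A_{i} \in \cA$ and $k_{1} > \cdots > k_{m}$. For each $A_{i}$, apply the torsion pair to get a short exact sequence $0 \to T_{i} \to A_{i} \to F_{i} \to 0$ in $\cA$, with $T_{i} \in \cT$ and $F_{i} \in \cF$. After shifting by $k_{i}$, this produces two pieces $T_{i}[k_{i}] \in \cA^{\dagger}[k_{i}]$ and $F_{i}[k_{i}] \in \cA^{\dagger}[k_{i} - 1]$, using $\cT \subset \cA^{\dagger}$ and $\cF[1] \subset \cA^{\dagger}$. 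Refining the original filtration by inserting these two-step refinements yields a new filtration of $E$ whose graded pieces lie in $\cA^{\dagger}[\ell]$ for a weakly decreasing sequence of integers $\ell$, with only consecutive equalities possibly occurring when $k_{i} - 1 = k_{i+1}$. Since $\cA^{\dagger}$ is closed under extensions, such equal-shift consecutive pieces can be combined into a single piece in $\cA^{\dagger}[\ell]$, producing a filtration with strictly decreasing shifts as required.

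Boundedness of the resulting t-structure is immediate from boundedness of the original one: if $E$ is supported in degrees $[a, b]$ with respect to $\cA$, then the above construction shows $E$ is supported in degrees $[a, b+1]$ with respect to $\cA^{\dagger}$. The main subtlety lies in the filtration step, specifically in packaging the torsion-pair decompositions of the $\cA$-cohomologies into a single filtration with strictly decreasing shifts; the extension-closedness of $\cA^{\dagger}$ is what makes this work.
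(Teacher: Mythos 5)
Your proof is correct; the paper does not prove this proposition (it is quoted from \cite{HRS1996}), and your route via the criterion of \cref{lem:1} is the standard reformulation of the Happel--Reiten--Smal{\o} argument. One point to tighten: the assertion that every \(E \in \cA^{\dagger} = \langle \cF[1], \cT\rangle_{\ext}\) sits ``by definition'' in a triangle \(T_{E} \to E \to F_{E}[1]\) is not literally immediate, since the extension closure a priori consists of iterated extensions of objects of \(\cF[1]\) and \(\cT\) in arbitrary order; for the \(\Hom\)-vanishing it is cleaner to induct on the length of such an extension filtration, which reduces the claim to exactly the four vanishings on the generators that you list (the two-term decomposition does hold, but as a consequence of the heart structure, not of the definition). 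The filtration step is handled correctly: the octahedral refinement inserting \(T_{i}[k_{i}]\) before \(F_{i}[k_{i}]\), the resulting weakly decreasing sequence of shifts with possible ties only when \(k_{i+1} = k_{i} - 1\), and the merging of equal-shift consecutive factors using extension-closedness of \(\cA^{\dagger}\) are precisely what is needed, as is the boundedness estimate \([a, b+1]\).
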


Now let \(\sigma = (\cA, Z)\) be a weak stability condition and \(\mu \in \RR\). We define the pair \((\cT_{\sigma}^{\mu}, \cF_{\sigma}^{\mu})\) of subcategories of \(\cA\) with
\begin{align*}
  \cT_{\sigma}^{\mu} &= \{E: \phi^{-}_{\sigma}(F) > \mu\} \\
  &= \langle E: E \text{ is } \sigma\text{-semistable with } \mu_{\sigma}(E) > \mu \rangle
\end{align*}
\begin{align*}
  \cF_{\sigma}^{\mu} &= \{E: \phi^{+}_{\sigma}(F) \leq \mu\} \\
                     &= \langle E: E \text{ is } \sigma\text{-semistable with } \mu_{\sigma}(E) \leq \mu \rangle
\end{align*}
By the existence of HN filtration, \((\cT, \cF)\) is a torsion pair. We now recall the construction of geometric stability conditions on surfaces and threefolds. Let \(X\) be an \(n\)-dimensional projective smooth variety \(H\) an ample divisor on \(X\). We define the lattice \(\Lambda_{H}^{j} \cong \ZZ^{j + 1}\) to be the image of the \(v^{j}_{H}\):
\[v^{j}_{H}: K(X) \to \QQ^{j + 1}, E \to (H^{n} \ch_{0}(E), \cdots, H^{n -j} \ch_{j}(E)) \in \QQ^{j + 1}\]
for \(j = 0, \dots, n\).

Then \(\sigma_{0} := (\cA = \Coh(X), Z^{1}_{H}(E) = iH^{n} \ch_{0}(E) - H^{n - 1} \ch_{1}(E))\) is a weak stability condition with respect to \(\Lambda_{H}^{1}\), as we may choose \(Q = 0\) in \cref{lem:4}.

We write \(\Coh^{\beta}_{H}(X) = \langle \cF_{\sigma_{0}}^{\beta}[1], \cT_{\sigma_{0}}^{\beta}\rangle_{\ext}\) where \(\beta \in \RR\). We denote
\[\ch^{\beta}(E) = e^{- \beta H} \ch(E) = (\ch_{0}^{\beta}(E), \dots, \ch_{n}^{\beta}(E)) \in H^{*}(X, \RR)\]

\begin{lemma}(\cite[Proof of Lemma 3.2.4]{BMT2014})
  \(\Coh^{\beta}_{H}(X)\) is Noetherian.
\end{lemma}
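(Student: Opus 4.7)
The plan is to force any ascending chain of subobjects $E_1 \subsetneq E_2 \subsetneq \cdots \subset E$ in $\Coh^{\beta}_H(X)$ to stabilize by combining Noether-ness of $\Coh(X)$ with a discreteness argument for a natural non-negative additive invariant on the tilted heart.

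First, each short exact sequence $0 \to E_i \to E_{i+1} \to Q_i \to 0$ in $\Coh^{\beta}_H(X)$ induces the long exact sequence of cohomology sheaves in $\Coh(X)$. Since every object of $\Coh^{\beta}_H(X)$ has cohomology only in degrees $-1$ and $0$, the connecting map $H^{-1}(E_i) \to H^{-1}(E_{i+1})$ is injective, so $\{H^{-1}(E_i)\}$ is an ascending chain of subsheaves of $H^{-1}(E) \in \Coh(X)$. Noether-ness of $\Coh(X)$ forces stabilization, and after truncation I may assume $H^{-1}(E_i) = F \in \cF^{\beta}$ is constant.

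Second, I consider the invariant $W(A) := H^{n-1}\ch_1(A) - \beta H^n \ch_0(A)$, which is additive in $A$. Splitting $A \in \Coh^{\beta}_H(X)$ into cohomology and applying the slope bounds $\mu_H^{-} > \beta$ on the torsion-free part of $\cT^{\beta}$ and $\mu_H^{+} \leq \beta$ on $\cF^{\beta}$, one checks $W(A) \geq 0$, with equality forcing $H^{-1}(A)$ to be $\mu_H$-semistable of slope exactly $\beta$ and $H^0(A)$ to be a torsion sheaf supported in codimension $\geq 2$. Since $\Coh^{\beta}_H(X)$ is locally constant in $\beta$ outside the discrete set of realized $\mu_H$-slopes, I may assume $\beta \in \QQ$, in which case $W$ takes values in a discrete subgroup of $\RR$. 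Then $W(E_i) \in [0, W(E)]$ is non-decreasing and discrete, hence eventually constant.

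After both reductions, each quotient $Q_i = E_{i+1}/E_i$ satisfies $W(Q_i) = 0$, and from the cohomology sequence with $H^{-1}(E_i) = H^{-1}(E_{i+1})$ has $H^{-1}(Q_i) \hookrightarrow H^0(E_i)$. By the equality case, $Q_i$ lies in the Serre subcategory $\cC^{\beta} \subset \Coh^{\beta}_H(X)$ consisting of objects whose $H^{-1}$ is $\mu_H$-semistable of slope $\beta$ and whose $H^0$ is torsion of codimension $\geq 2$. The iterated cokernels $E_i/E_{i_0} \in \cC^{\beta}$ have $H^{-1} = 0$, so they are genuine codim-$\geq 2$ torsion sheaves embedding as subobjects of $E/E_{i_0}$ in $\Coh^{\beta}_H(X)$, and Noether-ness of the abelian category of codim-$\geq 2$ torsion sheaves on $X$ (immediate from Noether-ness of $\Coh(X)$) terminates the chain.

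The main obstacle will be the last step: since the cohomology sheaves $H^{0}(E_i)$ themselves do not form a chain of subsheaves of any fixed ambient object, one has to check carefully that a chain of codim-$\geq 2$ torsion subobjects of the two-term complex $E/E_{i_0}$ in the tilted heart really does correspond to, and is controlled by, a chain of subsheaves of a coherent sheaf on $X$; this compatibility is where the extension structure of $\cC^{\beta}$ must be exploited.
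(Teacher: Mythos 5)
The paper gives no proof of this lemma; it defers to \cite[Lemma 3.2.4]{BMT2014}, and your outline follows the same general strategy as that reference (stabilize $\cH^{-1}$, exploit positivity, additivity and discreteness of $W=\Im Z^{2}$, reduce to a chain of torsion objects). As written, however, it contains one false reduction, one false intermediate claim, and leaves open precisely the step where the real work lies.

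First, the reduction to $\beta\in\QQ$ is not valid. The heart $\Coh^{\beta}_{H}(X)$ is \emph{not} locally constant outside a discrete set of $\beta$: the slopes of $\mu_H$-semistable sheaves form a dense subset of $\RR$ (already rank-$r$ sheaves obtained by Serre-type constructions realize a dense set of values $dH^{n}/r$), so the torsion pair, hence the heart, changes on a dense set. For irrational $\beta$ your invariant $W$ takes values in a dense subgroup of the form $\frac{1}{N}(\ZZ+\beta\ZZ)$, a bounded monotone sequence in which need not stabilize, so the discreteness argument collapses exactly where it is needed; the lemma is stated for all real $\beta$, and the irrational case requires the separate argument of \cite{BMT2014}. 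Second, the assertion that the iterated cokernels $E_i/E_{i_0}$ have $\cH^{-1}=0$ is false even after your two stabilizations: for $\beta=0$ and $D\in|H|$ smooth, the triangle $\cO(H)\to\cO_D(H)\to\cO[1]$ is a short exact sequence in $\Coh^{0}_{H}(X)$ in which sub- and ambient object have $\cH^{-1}=0$ and equal $W$, yet the quotient $\cO[1]$ has $\cH^{-1}=\cO\neq 0$. This can be repaired by one further stabilization which you omit: once $W(Q_i)=0$, the sheaf $\cH^{0}(Q_i)$ is torsion, so $\ch_0(Q_i)=-\ch_0(\cH^{-1}(Q_i))\leq 0$; hence $\ch_0(E_i)$ is a non-increasing integer bounded below by $-\ch_0(\cH^{-1}(E))$ and is eventually constant, which forces $\cH^{-1}(Q_i)=0$ from then on. Finally, the step you yourself flag as ``the main obstacle'' is the core of the cited proof and is not carried out: the chain $R_i=E_i/E_{i_0}$ of codimension-$\geq 2$ torsion sheaves is not a chain of subsheaves of a fixed sheaf, since $R_i\to\cH^{0}(E/E_{i_0})$ has kernel $C_i=\coker\bigl(\cH^{-1}(E/E_{i_0})\to\cH^{-1}(E/E_i)\bigr)$. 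The standard resolution is to note that $\cH^{-1}(E/E_i)$ is torsion-free, contains $A:=\cH^{-1}(E/E_{i_0})$ with quotient supported in codimension $\geq 2$, and is therefore contained in the fixed reflexive hull $A^{**}$; then $C_i\subset A^{**}/A$ and $R_i/C_i\subset\cH^{0}(E/E_{i_0})$ are ascending chains inside fixed coherent sheaves, and Noetherianity of $\Coh(X)$ terminates both. Without these three repairs the proposed proof is incomplete.
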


We can construct a weak stability condition on this tilted heart.

\begin{proposition}(\cite{BMT2014})
  Given \(\alpha > 0\) and \(\beta \in \RR\), the pair \(\sigma_{\alpha, \beta}^{2} = (\Coh_{H}^{\beta}(X), Z^{2}_{\alpha, \beta})\) defines a weak stability condition on \(D^{b}(X)\) with respect to \(\Lambda_{H}^{2}\). Here
  \[Z^{2}_{\alpha, \beta}(E) := i H^{n - 1} \ch_{1}^{\beta}(E) + \frac{1}{6} \alpha^{2} H^{n} \ch_{0}^{\beta} - H^{n - 2} \ch_{2}^{\beta}(E)\]
  Furthermore, if \(\dim X = 2\), it is a stability condition.
\end{proposition}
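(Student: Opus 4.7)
The plan is to verify in turn the three defining properties of a weak stability condition for $\sigma^{2}_{\alpha,\beta}$ with respect to $\Lambda^{2}_{H}$: the weak stability function axiom on $\Coh^{\beta}_{H}(X)$, the existence of Harder-Narasimhan filtrations, and the support condition. The strengthening to an honest stability function when $\dim X = 2$ will come for free from the first step.

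For the stability function axiom, write each $E \in \Coh^{\beta}_{H}(X)$ via the defining torsion pair as an extension $T \to E \to F[1]$ with $T \in \cT^{\beta}_{\sigma_{0}}$ and $F \in \cF^{\beta}_{\sigma_{0}}$. Since $\Im Z^{2}_{\alpha,\beta}(E) = H^{n-1}\ch_{1}^{\beta}(E)$ is additive, the defining slope inequalities of the torsion pair immediately give $\Im Z^{2}_{\alpha,\beta}(E) \geq 0$. In the vanishing case, analysing the $\mu_{\sigma_{0}}$-HN filtrations of $T$ and $F$ shows that $E$ is necessarily assembled by extensions from (i) torsion sheaves of codimension at least $2$ and (ii) shifts $F'[1]$ where $F'$ is $\mu_{\sigma_{0}}$-semistable of slope exactly $\beta$. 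In case (i), $\Re Z^{2}_{\alpha,\beta}$ reduces to $-H^{n-2}\ch_{2}$, which is nonpositive (strictly so on $0$-dimensional sheaves, hence on a surface). In case (ii), the classical Bogomolov-Gieseker inequality together with $H^{n-1}\ch_{1}^{\beta}(F') = 0$ forces $H^{n-2}\ch_{2}^{\beta}(F') \leq 0$, and the remaining $\alpha^{2}$-term in $\Re Z^{2}$ is strictly negative on $F'[1]$ for $F' \neq 0$. This yields the weak stability function property and, in dimension $2$, the strict inequality that upgrades it to a genuine stability function.

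For the HN filtration property, I would combine the Noetherianity of $\Coh^{\beta}_{H}(X)$ recorded in the preceding lemma with the discreteness of the image of $v^{2}_{H}$ in $\QQ^{3}$, and run the standard descending-chain argument of \cite{Bridgeland2007}. The main obstacle is the support condition: here I would take the Bogomolov discriminant $\bar{\Delta}^{\beta}_{H}(E) = (H^{n-1}\ch_{1}^{\beta}(E))^{2} - 2(H^{n}\ch_{0}(E))(H^{n-2}\ch_{2}^{\beta}(E))$ as quadratic form on $\Lambda^{2}_{H,\RR}$, which is negative definite on $\ker Z^{2}_{\alpha,\beta}$ by direct computation. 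Non-negativity on $\sigma^{2}_{\alpha,\beta}$-semistable objects is then reduced via a d\'evissage along the $\mu_{\sigma_{0}}$-HN filtration of the underlying heart object to the classical Bogomolov-Gieseker inequality on the slope-semistable constituents. This step is the most delicate, since a $\sigma^{2}_{\alpha,\beta}$-semistable object need not be $\mu_{\sigma_{0}}$-semistable, so one must carefully bound the slopes of its HN factors using the tilt-semistability hypothesis before combining the inequalities.
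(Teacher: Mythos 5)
Your outline is correct and follows essentially the same route the paper indicates for this cited result from \cite{BMT2014}: Noetherianity of $\Coh^{\beta}_{H}(X)$ for the Harder--Narasimhan property, and the classical Bogomolov inequality combined with the Hodge index theorem to produce the discriminant $\Delta_{H}$ as the quadratic form $Q$ of \cref{lem:4} for the support condition. The torsion-pair d\'evissage you use for the weak (resp.\ genuine, in dimension $2$) stability function axiom is also the standard argument, so no further comparison is needed.
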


The Noetherian property helps to prove the existence of HN filtrations. And another main ingredient of the above proposition is the Bogomolov inequality (see \cite{Langer2015}), that is, for \(Z_{H}\)-semistable sheaf \(E\), we have
\[H^{n - 2}(\ch_{1}^{2}(E) - 2 \ch_{0}(E) \ch_{2}(E)) \geq 0.\]
By the Hodge index theorem, we have
\[\Delta_{H} := (H^{n - 1} \ch_{1}(E))^{2} - 2 H^{n} \ch_{0}(E) \cdot H^{n - 2} \ch_{2}(E) \geq 0\]
which will be \(Q\) in~\cref{lem:4} to establish the support property.

If \(n = \dim X = 3\), we can further tilt \(\Coh^{\beta}_{H}(X)\) with slope \(Z^{2}_{\alpha, \beta}\) to obtain a new pair \(\sigma^{3}_{\alpha, \beta} = (\cA_{H}^{\alpha,\beta}, Z^{3}_{\alpha,\beta})\), where
\[\cA_{H}^{\alpha, \beta} := \langle \cF_{\sigma_{\alpha, \beta}}^{0}[1], \cT_{\sigma_{\alpha, \beta}}^{0} \rangle_{\ext}\]
\[Z^{3}_{\alpha, \beta}(E) = (-\ch_{3}^{\beta}(E) + \frac{\alpha^{2}}{2} H^{2}\ch_{1}^{\beta}(E)) + i(\alpha H \ch_{2}^{\beta}(E) - \frac{\alpha^{3}}{6} H^{3}\ch_{0}^{\beta}(E))\]

We have following two equivalent conjectures:
\begin{conjecture}(\cite[Conjecture 3.2.6]{BMT2014})
  \label{conj:1}
  The pair \((\cA_{H}^{\alpha, \beta}, Z^{3}_{\alpha, \beta})\) is a stability condition on \(D^{b}(X)\).
\end{conjecture}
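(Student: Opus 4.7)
The plan is to verify, in the standard order, the three axioms defining a weak stability condition for the pair $\sigma^3_{\alpha,\beta} = (\cA_H^{\alpha,\beta}, Z^3_{\alpha,\beta})$: that $Z^3_{\alpha,\beta}$ is a weak stability function on the double tilt $\cA_H^{\alpha,\beta}$, that Harder-Narasimhan filtrations exist, and that the support property holds. The whole argument is modelled on the construction of $\sigma^2_{\alpha,\beta}$ from $\sigma^1_H$ recalled above; the statement that the pair is in fact a genuine stability condition (and not merely a weak one) on a threefold is an additional bonus that should fall out of the same inputs once the three axioms are checked.

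For the stability-function axiom I would first exploit the identity $\Im Z^3_{\alpha,\beta} = -\alpha\,\Re Z^2_{\alpha,\beta}$. For a $\sigma^2_{\alpha,\beta}$-semistable constituent $F$ of some $E \in \cA_H^{\alpha,\beta}$, either $\mu_{\sigma^2}(F) > 0$, so $F \in \cT_{\sigma^2}^0$, $\Re Z^2(F) < 0$ and hence $\Im Z^3(F) > 0$; or $\mu_{\sigma^2}(F) \leq 0$, so $F[1] \in \cF_{\sigma^2}^0[1]$ and $\Im Z^3(F[1]) \geq 0$. The boundary case $\Im Z^3(E) = 0$ then forces $E \cong F[1]$ with $F$ an extension of $\sigma^2_{\alpha,\beta}$-semistables of slope exactly $0$, and the required strict inequality $\Re Z^3(E) < 0$ translates into
\[\ch^\beta_3(F) < \tfrac{\alpha^2}{2}\, H^2\, \ch^\beta_1(F)\]
for every such $F$. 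For HN filtrations I would establish Noetherianity of $\cA_H^{\alpha,\beta}$; unlike the first tilt this is not automatic in the threefold case and is itself intertwined with the Bogomolov-Gieseker inequality below, but once it is in hand the HN machinery of \cite{Bridgeland2007} applies directly.

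The main obstacle is the support property. By \cref{lem:4} one wants a quadratic form $Q$ on $\Lambda_{H,\RR}^3$ that is negative definite on $\ker Z^3_{\alpha,\beta}$ and non-negative on every $\sigma^3_{\alpha,\beta}$-semistable object. Following \cite{BMT2014}, the natural candidate combines the classical Bogomolov form $\Delta_H$ with a correction term involving $\ch_3$, and the positivity of $Q$ on semistables reduces to precisely the Chern-character inequality already encountered in the boundary case above, now asked of every $\sigma^2_{\alpha,\beta}$-tilt-semistable object. This generalised Bogomolov-Gieseker inequality is where the genuine content lies; everything else is formal manipulation. Its current status, established in the special classes of threefolds listed in the introduction but with counterexamples in general by \cite{Schmidt2017}, means that any serious proof attempt will either restrict to a concrete geometry or else seek a modified inequality, as in \cite{BMS+2017}, that is still sufficient to build $Q$.
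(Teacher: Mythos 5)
This statement is a conjecture (Conjecture 3.2.6 of \cite{BMT2014}); the paper offers no proof of it and only records, via the cited \cite[Corollary 5.2.4]{BMT2014}, its equivalence with the generalized Bogomolov--Gieseker inequality of \cref{conj:2}. Your outline follows exactly that standard reduction --- checking the weak stability function axiom via $\Im Z^3_{\alpha,\beta} = -\alpha\,\Re Z^2_{\alpha,\beta}$, Noetherianity for HN filtrations, and the support property via a quadratic form --- and correctly isolates the inequality $\ch_3^\beta(F) < \tfrac{\alpha^2}{2}H^2\ch_1^\beta(F)$ for tilt-semistable objects of vanishing tilt-slope as the sole genuine content. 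You are also right that no unconditional proof is possible: the inequality, and hence the conjecture in this original form, fails in general by \cite{Schmidt2017}, which the paper itself acknowledges. So your proposal is an accurate roadmap matching the cited literature rather than a complete proof, which is the most that can honestly be said here.
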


\begin{conjecture}(\cite[Conjecture 3.2.7]{BMT2014})
  \label{conj:2}
  For any \(Z_{\alpha, \beta}^{2}\)-semistable object \(E \in \cB_{H}^{\beta}\) satisfying
  \[\Im Z_{\alpha, \beta}^{3} = 0\]
  we have
  \[\ch_{3}^{\beta}(E) < \frac{\alpha^{2}}{2} H^{2} \ch_{1}^{\beta}(E).\]
\end{conjecture}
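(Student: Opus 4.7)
The plan is to establish the inequality by an induction on the discriminant $\Delta_H$ combined with wall-crossing for the weak stability conditions $\sigma^2_{\alpha,\beta}$. First observe that the condition $\Im Z^3_{\alpha,\beta}(E)=0$ simplifies to $H\ch_2^\beta(E) = \tfrac{\alpha^2}{6}H^3\ch_0^\beta(E)$, while the desired inequality $\ch_3^\beta(E) < \tfrac{\alpha^2}{2}H^2\ch_1^\beta(E)$ is equivalent to $\Re Z^3_{\alpha,\beta}(E) > 0$. Thus the statement asserts that, on the locus where $Z^3$ is forced onto the real axis, it lies strictly on the negative side---precisely the non-degeneracy needed for $Z^3_{\alpha,\beta}$ to define a stability function on the tilted heart $\cA_H^{\alpha,\beta}$, which is why the two displayed conjectures are equivalent.

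Next I would handle the base case: $Z^2$-semistable objects $E$ saturating Bogomolov, i.e.\ $\Delta_H(E)=0$. By the equality case together with the structure of $\mu_H$-semistable sheaves, such $E$ are of a very restricted numerical form (essentially twists of a slope-stable object, up to shift), and one can verify $\Re Z^3 > 0$ by direct computation using Hirzebruch--Riemann--Roch together with the constraint $H\ch_2^\beta = \tfrac{\alpha^2}{6}H^3\ch_0^\beta$. For a general $E$, I would study its walls for $\sigma^2_{\alpha',\beta'}$-stability in the upper half $(\alpha',\beta')$-plane: at a numerical wall, $E$ becomes strictly semistable with Jordan--H\"older factors $F_i$ of strictly smaller $\Delta_H$. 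Induction on $\Delta_H$ then yields the inequality for each $F_i$, which one combines additively, keeping careful track of how the condition $\Im Z^3 = 0$ distributes over the factors (typically by upgrading the target to a quadratic form in $v$ which is additive along walls).

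The main obstacle is the case where $E$ remains $\sigma^2$-stable in every chamber, so no wall-crossing reduction is available. This case cannot be closed purely numerically: Schmidt's counterexample on $\Bl_p\PP^3$ shows the conjecture can fail, and the known proofs on Fano threefolds of Picard rank $1$, abelian threefolds, and quintic threefolds each rely on variety-specific input (Kodaira-type cohomology vanishing, deformation invariance, or Calabi--Yau positivity). Consistent with the paper's stated theorems, the strategy I would favor in the present setting is to use the blowup semiorthogonal decomposition together with the perverse heart to transfer the inequality from $Y$ to $X = \Bl_C Y$, thereby reducing the conjecture on $X$ to its known validity on the base $Y$ rather than attempting a direct attack on $X$.
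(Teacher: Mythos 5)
There is no proof of this statement in the paper to compare against: \cref{conj:2} is recorded verbatim as a conjecture imported from \cite{BMT2014}, is used only as a \emph{hypothesis} (on the base $Y$), and the introduction explicitly recalls that it is false in general (Schmidt's counterexample on $\Bl_{p}\PP^{3}$). Your third paragraph diagnoses this correctly, and also correctly identifies what the paper actually does: assuming the inequality of \cref{conj:2} for $(\Coh^{\beta=0}_{\omega}(Y),Z^{2})$ on $Y$, it transfers it to the perverse analogue on $X$ for $\nu_{f^{*}\omega}$-semistable objects of $\cB_{f^{*}\omega}$ (\cref{lem:9}), by comparing a stable $E$ with $\nu_{f^{*}\omega}(E)=0$ to $Lf^{*}Rf_{*}E$ through the triangle coming from the blowup semiorthogonal decomposition, using that the correction terms lie in $S(\cA_{2})[k]$, have vanishing $(f^{*}\omega)$-degrees, and contribute to $\ch_{3}$ with a favorable sign; combined with Toda's criterion (\cref{prop:4}) this gives \cref{thm:3}. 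That is a reduction of a different inequality on $X$ to \cref{conj:2} on $Y$, not a proof of \cref{conj:2}, so the honest verdict on the task as posed is that the statement cannot be proved and your proposal is not a proof of it.

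Beyond the unclosable case you flag yourself, the wall-crossing induction of your first two paragraphs has gaps that would block it even as a conditional outline. The hypothesis $\Im Z^{3}_{\alpha,\beta}(E)=0$ is a codimension-one condition at a fixed $(\alpha,\beta)$ and does not distribute over the Jordan--H\"older factors produced at a wall for $\sigma^{2}_{\alpha',\beta'}$ at a \emph{different} point $(\alpha',\beta')$, so the naive inductive statement is not inherited by the $F_{i}$; repairing this forces one to replace \cref{conj:2} by an open, quadratic-form strengthening (as in \cite{BMS+2017}), and the paper explicitly warns that its results do not apply to those variants. Moreover $\Delta_{H}$ of the factors need not strictly decrease (it does not when their classes are proportional to $v(E)$), so the induction need not terminate. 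If you want your write-up to align with the paper, the target to prove is \cref{lem:9} (and then \cref{thm:3}), taking \cref{conj:2} on $Y$ as an assumption, rather than \cref{conj:2} itself.
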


\begin{proposition}(\cite[Corollary 5.2.4]{BMT2014})
  \cref{conj:1} and \cref{conj:2} are equivalent.
\end{proposition}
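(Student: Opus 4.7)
The plan is to prove both directions by interpreting \cref{conj:2} as precisely the strict stability function condition for $Z^3_{\alpha,\beta}$ on the tilted heart $\cA_H^{\alpha,\beta}$. The pivotal observation, obtained by comparing the two weak central charges in the case $n=3$, is the numerical identity
\[
  \Im Z^3_{\alpha,\beta}(E) = -\alpha \, \Re Z^2_{\alpha,\beta}(E),
\]
which shows that the locus $\{\Im Z^3 = 0\}$ in $\Coh^{\beta}_{H}(X)$ coincides with the locus $\{\Re Z^2 = 0\}$, i.e.\ with the $\sigma^2_{\alpha,\beta}$-semistable objects of slope $0$ together with those satisfying $Z^2 = 0$.

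For the direction $\cref{conj:1} \Rightarrow \cref{conj:2}$, I would take a $\sigma^2_{\alpha,\beta}$-semistable $E \in \Coh^{\beta}_{H}(X)$ with $\Im Z^3(E) = 0$. Then $E$ has $\sigma^2$-slope $0$, hence lies in $\cF^{0}_{\sigma^{2}}$, so $E[1] \in \cA_H^{\alpha,\beta}$. Assuming the pair is a genuine (not merely weak) stability condition, the strict stability function axiom applied to $E[1]$ forces $\Re Z^3(E[1]) < 0$, which unpacks directly to the inequality $\ch_3^\beta(E) < \tfrac{\alpha^2}{2} H^2 \ch_1^\beta(E)$ of \cref{conj:2}.

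For the converse $\cref{conj:2} \Rightarrow \cref{conj:1}$, the main task is to verify the three axioms of a stability condition on $\cA_H^{\alpha,\beta}$. The stability function axiom is where \cref{conj:2} enters: given nonzero $E \in \cA_H^{\alpha,\beta}$ with $\Im Z^3(E) = 0$, I would use the defining tilt triangle $F[1] \to E \to T$ with $F \in \cF^{0}_{\sigma^{2}}$ and $T \in \cT^{0}_{\sigma^{2}}$, then HN-decompose $F$ and $T$ in $\Coh^{\beta}_{H}(X)$ with respect to $\sigma^2_{\alpha,\beta}$. The identity above combined with the sign constraints on each side of the tilt forces every HN factor to be $\sigma^2$-semistable with either $\mu_{\sigma^2} = 0$ (handled by \cref{conj:2}, where the shift $[1]$ flips the sign of $\Re Z^3$) or $Z^2 = 0$ (which, via the classical Bogomolov inequality, forces $\ch_0 = H^2\ch_1 = H\ch_2 = 0$, so the factor is supported in dimension zero and $\Re Z^3 = -\ch_3^\beta$ is strictly negative on such nonzero effective classes). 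Existence of Harder--Narasimhan filtrations then follows from the Noetherianess of $\Coh^{\beta}_{H}(X)$, passed to $\cA_H^{\alpha,\beta}$, together with Bridgeland's standard argument.

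The hard part will be the support condition. Here I would seek a quadratic form $Q$ on $\Lambda_H^3 \otimes \RR$ extending the classical Bogomolov discriminant $\Delta_H$ by corrections involving $\ch_3^\beta$, satisfying $Q|_{\ker Z^3}$ negative definite (a direct linear-algebra check) and $Q(v(E)) \ge 0$ for every $\sigma^3_{\alpha,\beta}$-semistable object. The second property is the genuine obstacle: $\Delta_H$ alone controls $\ch_0, \ch_1, \ch_2$ but not $\ch_3^\beta$, and the missing control comes precisely from \cref{conj:2} applied at the boundary locus $\{\Im Z^3 = 0\}$. Propagating this boundary control to arbitrary $\sigma^3$-semistables requires a wall-crossing argument — deforming $(\alpha,\beta)$ to drive a given semistable object toward a wall and invoking \cref{conj:2} there — which is the most delicate ingredient of the equivalence.
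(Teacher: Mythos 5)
The paper offers no proof of this proposition; it is quoted verbatim from \cite[Corollary 5.2.4]{BMT2014}, so your sketch can only be measured against the argument there. For the core equivalence your proposal is correct and is essentially the argument of loc.\ cit.\ The identity $\Im Z^{3}_{\alpha,\beta}=-\alpha\,\Re Z^{2}_{\alpha,\beta}$ is the right pivot; the direction \cref{conj:1}$\Rightarrow$\cref{conj:2} is exactly the evaluation of the stability-function axiom on $E[1]$ for $E\in\cF^{0}_{\sigma^{2}_{\alpha,\beta}}$; and in the converse, your splitting of the kernel of $\Im Z^{3}$ into slope-zero HN factors (handled by \cref{conj:2}) and $Z^{2}=0$ factors (forced by the classical Bogomolov inequality to be zero-dimensional, where $\Re Z^{3}=-\ch_{3}<0$) is precisely how the positivity of the central charge is verified. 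Two small points of care: the HN step needs not just Noetherianity of $\Coh^{\beta}_{H}(X)$ but also discreteness of the image of $\Im Z^{3}$ on the heart; and the hypothesis of \cref{conj:2} must be read as excluding the $Z^{2}=0$ objects (for a skyscraper sheaf one has $\Im Z^{3}=0$ and $\ch_{1}^{\beta}=0$ but $\ch_{3}>0$), which is consistent with your direction-one argument only treating the slope-zero case.

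The one genuine gap is the support condition, and you are right to flag it: the wall-crossing sketch in your last paragraph is a research programme, not a proof. However, \cite[Corollary 5.2.4]{BMT2014} is formulated for Bridgeland's original (locally finite) notion of stability condition, which does not include the support property, so the cited equivalence does not ask you to produce the quadratic form at all; under that reading your proof is complete once the HN step is tightened. If instead one insists on the support property as in the present paper's definition, it is not known to follow from \cref{conj:2} at a single $(\alpha,\beta)$: the existing derivations (e.g.\ \cite{BMS2016}) require either a quadratic-form strengthening of the inequality or its validity on an open family of parameters, since the BMT inequality alone gives no control of $\ch_{3}^{\beta}$ away from the wall $\Im Z^{3}=0$. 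So either restrict the claim to the support-free notion, matching the source, or state explicitly that the support half needs strictly more input than \cref{conj:2}.
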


Let \(\omega = \alpha H\), we write \(\sigma^{3}_{\omega, \beta} = (\cA_{\omega}^{\beta}, Z_{\omega, \beta}^{3}) := \sigma^{3}_{\alpha, \beta}\), and if \(\beta = 0\), we simply denote it by \(\sigma_{\omega}^{3} = (\cA_{\omega}, Z_{\omega}^{3})\). The notation is well defined as it is independent of the choice of \(\alpha, H\).

\section{Review on inducing stability conditions}
\label{sec:induc-t-struct}

\subsection*{Semiorthogonal decompositions}

We quick review basic definitions and facts about semi-orthogonal decompositions. Let \(\cD\) be a triangulated category.

\begin{definition}
  A \textit{semiorthogonal decomposition} of \(\cD\) is a sequence of full admissible triangulated subcategories \(\cD_{1}, \cdots, \cD_{m} \subset \cD\) such that \(\Hom_{\cD}(E,F) = 0\) for \(E \in \cD_{i}, F \in \cD_{j}, i > j\) and \(\cD\) is the minimal triangulated category in \(\cD\) containing every \(\cD_{i}\).
\end{definition}

\begin{definition}
  An object \(E \in \cD\) is \textit{exceptional} if \(\Hom_{\cD}(E, E[p]) = 0\) for all \(p \neq 0\) and \(\Hom_{\cD}(E, E[p]) = \CC\). A collection \(\{E_{1}, \cdots, E_{m}\}\) is called an \textit{exceptional collection} if \(E_{i}\) are exceptional object and \(\Hom_{\cD}(E_{i}, E_{j}[p]) = 0\) for all \(p\) and \(i > j\).
\end{definition}

An exceptional collection \(\{E_{1}, \cdots, E_{m}\}\) in \(\cD\) gives a semiorthogonal decomposition
\[\cD = \langle \cD', E_{1}, \cdots, E_{m} \rangle\]
where we abuse the notation to denote the full subcategory in \(\cD\) generated by \(E_{i}\) also by notation \(E_{i}\), and
\[\cD' = \langle E_{1}, \cdots, E_{m} \rangle^{\perp} = \{G \in \cD : \Hom(E_{i}, G[p]) = 0, \text{ for all } p \text{ and } i\}\]
Similarly, it gives semiorthogonal decomposition
\[\cD = \langle E_{1}, \cdots, E_{m}, \cD'' \rangle\]
where \(\cD'' = {}^{\perp} \langle E_{1}, \cdots, E_{m} \rangle = \{G \in \cD: \Hom(G, E_{i}[p]) = 0, \text{ for all } p \text{ and } i\}\).

\begin{definition}
  A Serre functor on \(\cD\) is an equivalence \(S: \cD \to \cD\) such that for any two objects \(A, B \in \cA\), there exists an isomorphism
  \[\eta_{A, B}: \Hom(A, B) \xrightarrow{\sim} \Hom(B, S(A))^{*}\]
  functorial in \(A\) and \(B\).
\end{definition}

The following facts are a direct consequence of the definitions of semiorthogonal decomposition and Serre functor.
\begin{enumerate}
  \item If \(\cD = \langle \cD_{1}, \cD_{2} \rangle\), then \(K(\cD) = K(\cD_{1}) \oplus K(\cD_{2})\).
  \item If \(\cD = \langle \cD_{1}, \cD_{2} \rangle\) is a semiorthogonal decomposition, then
  \[\cD = \langle S(\cD_{2}), \cD_{1} \rangle = \langle \cD_{2}, S^{-1}(\cD_{1}) \rangle\]
  are also semiorthogonal decompositions.
\end{enumerate}

In this paper, we consider a special case of semiorthogonal decompositions i.e.\ those that follow from blow-up of a point. The following theorem is well-known.

\begin{theorem}(\cite{BO1995,Huybrechts2006})
  \label{thm:1}
  Let \(Y\) be a smooth subvariety of a smooth projective variety \(X\). Let \(f: \tilde{X} = \Bl_{Y}(X) \to X\) be the blow-up, denote by \(i: D \hookrightarrow \tilde{X}\) the inclusion of the exceptional divisor and \(\pi: E \to Y\) the projection map. Then
  \begin{enumerate}
    \item The functors \(Lf^{*}\) and \(\Phi_{k}\) are fully faithful, where
    \[\Phi_{k}(E) := i_{*} \circ (\cO_{D}(kD) \otimes E) \circ \pi^{*}: D^{b}(Y) \to D^{b}(\tilde{X})\]
    and
    \item We have a semiorthogonal decomposition
    \[D^{b}\Coh(\tilde{X}) = \langle Lf^{*}D^{b}\Coh(Y), \Phi_{0}(D^{b}\Coh(Y)), \cdots, \Phi_{c - 2}(D^{b}\Coh(Y)) \rangle\]
    where \(c\) is the codimension of \(Y\) in \(X\).
  \end{enumerate}

\end{theorem}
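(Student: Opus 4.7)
My plan is to break the statement into three tasks: fully faithfulness of $Lf^*$ and each $\Phi_k$, semiorthogonality between the components, and generation of $D^b\Coh(\tilde X)$ by the listed subcategories. The first two are reasonably routine once the geometry of the exceptional divisor is set up; the generation step is the main obstacle.

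Fully faithfulness of $Lf^*$ follows from the classical vanishing $Rf_*\cO_{\tilde X}\cong\cO_X$ (which holds because the blow-up of a smooth center in a smooth variety has fibers $\PP^{c-1}$ on which higher cohomology of $\cO$ vanishes) together with the projection formula: $Rf_*Lf^*F\cong F\otimes^L Rf_*\cO_{\tilde X}\cong F$, hence $\Hom(Lf^*E,Lf^*F)\cong\Hom(E,F)$. For the $\Phi_k$'s the geometric input is that $\pi:D\to Y$ is the projective bundle $\PP(N_{Y/X})$ with $\cO_D(D)\cong\cO_\pi(-1)$, so that $\Phi_k(F)\cong i_*(\pi^*F\otimes\cO_\pi(-k))$. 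I would compute the right adjoint using $Li^*\dashv i_*$ (with Grothendieck duality giving $i^!\cong Li^*(-)\otimes\cO_D(D)[-1]$) and $\pi^*\dashv R\pi_*$, and then reduce fully faithfulness and all the semiorthogonality vanishings $\Hom(\Phi_kE,Lf^*F)=0$ and $\Hom(\Phi_kE,\Phi_{k'}F)=0$ for $k>k'$ to: the relative vanishings $R\pi_*\cO_\pi(m)=0$ for $-(c-1)\le m\le -1$, the identity $R\pi_*\cO_\pi\cong\cO_Y$, the Koszul computation $Li^*i_*G\cong G\oplus(G\otimes\cO_D(-D))[1]$, and the base change identity $Rf_*\circ i_*\cong i_{Y*}\circ R\pi_*$, where $i_Y:Y\hookrightarrow X$.

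The generation step is the delicate part. My plan is to combine Orlov's projective bundle formula on $\pi:D\to Y$,
\[D^b\Coh(D)=\langle \pi^*D^b\Coh(Y),\,\pi^*D^b\Coh(Y)\otimes\cO_\pi(1),\,\ldots,\,\pi^*D^b\Coh(Y)\otimes\cO_\pi(c-1)\rangle,\]
with the fact that $\langle Lf^*D^b\Coh(X),\,i_*D^b\Coh(D)\rangle$ generates $D^b\Coh(\tilde X)$; the latter follows from Verdier's localisation triangle $i_*Ri^!E\to E\to Rj_*Lj^*E$ for the open immersion $j:\tilde X\setminus D\xrightarrow{\sim}X\setminus Y\hookrightarrow\tilde X$, noting that $Rj_*Lj^*E$ is captured by $Lf^*$-pullbacks from $X$ modulo $i_*$-supported corrections. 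The main obstacle is then the bookkeeping needed to rewrite the combined collection in the stated form: after tensoring by appropriate line bundles, $c-1$ of the projective-bundle pieces become the $\Phi_k$ blocks via the identification $\cO_\pi(-k)=\cO_D(kD)$, while the remaining piece must be absorbed into $Lf^*D^b\Coh(X)$ through a mutation using the relative Koszul/Euler resolution on $D$ that relates $\cO_\pi(c-1)$ to restrictions of line bundles pulled back from $\tilde X$.
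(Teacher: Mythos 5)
The paper does not prove this statement at all: it is quoted verbatim (as Orlov's blow-up formula) from the cited references, and your outline is essentially the standard argument found there, so in spirit you are reconstructing the proof of Bondal--Orlov/Huybrechts rather than diverging from the paper. Within that outline, though, two points need repair. First, a sign/ordering issue that your ``reduction to relative vanishings'' glosses over: with the definition as printed, \(\Phi_k = i_*(\pi^*(-)\otimes\cO_D(kD)) = i_*(\pi^*(-)\otimes\cO_\pi(-k))\), the vanishing \(\Hom(\Phi_kE,Lf^*F)=0\) you assert actually fails for \(k\ge 1\): by adjunction and \(i^!\cong Li^*(-)\otimes\cO_D(D)[-1]\) it equals \(\Hom_Y\bigl(E,\;Li_Y^*F\otimes R\pi_*\cO_\pi(k-1)\bigr)[-1]\), which for \(k=1\) is \(\Ext^{\bullet-1}_Y(E,Li_Y^*F)\neq 0\) in general. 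The intended (and standard) statement either twists by \(\cO_D(-kD)=\cO_\pi(k)\) with \(Lf^*D^b\Coh(X)\) placed first --- this is the form the paper actually uses later, e.g.\ \(\langle Lf^*D^b\Coh(Y),\cO_D,\cO_D(1)\rangle\) --- or keeps \(\cO_D(kD)\) but reorders the blocks as \(\langle \Phi_{c-1},\dots,\Phi_1, Lf^*D^b\Coh(X)\rangle\). You should fix this convention before the Hom computations, since as stated the claimed decomposition is not semiorthogonal.

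Second, the generation step as written would not go through: the Verdier triangle \(i_*Ri^!E\to E\to Rj_*Lj^*E\) leaves \(D^b\Coh(\tilde X)\) (an open pushforward is rarely coherent), and ``captured by \(Lf^*\)-pullbacks modulo \(i_*\)-supported corrections'' is not yet an argument. The standard replacement is elementary: the cone of the natural map \(Lf^*Rf_*E\to E\) is an isomorphism away from \(D\) (where \(f\) is an isomorphism), hence is an object of \(D^b\Coh(\tilde X)\) set-theoretically supported on \(D\), and any such object lies in the triangulated subcategory generated by \(i_*D^b\Coh(D)\) by d\'evissage along powers of \(\cO_{\tilde X}(-D)\); then Orlov's projective bundle decomposition of \(D^b\Coh(D)\) applies, and the one remaining twist is absorbed as you indicate (or, as in Huybrechts, one shows directly that the orthogonal of the listed blocks is zero using \(Rf_*E=0\)). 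Finally, a minor point: \(Li^*i_*G\cong G\oplus (G\otimes\cO_D(-D))[1]\) is not true for arbitrary \(G\in D^b\Coh(D)\); only the distinguished triangle \(G\otimes\cO_D(-D)[1]\to Li^*i_*G\to G\) is available, but that is all your Hom computations require.
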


In particular, if \(Y\) is a point, then we have a semiorthogonal decomposition
\[D^{b}\Coh(\tilde{X}) = \langle Lf^{*}D^{b}\Coh(X), \cO_{E}, \cO_{E}(-E), \cdots, \cO_{E}(-n+2)E) \rangle\]
where \(n = \dim X\). Notice that \(\cO_{E}(-kE)\) are exceptional objects.

\subsection*{Inducing stability conditions}

Fixing a semiorthogonal decomposition \(\cD = \langle \cD_{1}, \cD_{2} \rangle\), we want to establish stability condition on \(\cD_{1}\) from stability condition on \(\cD\). The following results are obtained in \cite{BLM+2023}.

\begin{definition}
  A \textit{spanning class} of a triangulated category \(\cD\) is a set of objects \(\cG\) such that if \(F \in \cD\) satisfies \(\Hom(G, F[p]) = 0\) for all \(G \in \cG\) and all \(p \in \ZZ\), then \(F \cong 0\).
\end{definition}

The following lemma gives a sufficient condition for inducing t-structures on semiorthogonal components.

\begin{lemma}(\cite[Lemma 4.3]{BLM+2023})
  \label{lem:5}
  Let \(\cA \subset \cD\) be the heart of a bounded t-structure. Assume that the spanning class \(\cG\) of \(\cD_{2}\) satisfies \(\cG \subset \cA\), and \(\Hom(G, F[p]) = 0\) for all \(G \in \cG, F \in \cA\) and all \(p > 1\). Then \(\cA_{1} := \cD_{1} \cap \cA\) is the heart of a bounded t-structure on \(\cD_{1}\).
\end{lemma}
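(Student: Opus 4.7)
The plan is to verify the two conditions of Bridgeland's criterion \cref{lem:1} for $\cA_1 = \cA \cap \cD_1$ viewed as a full subcategory of $\cD_1$. Condition (i), vanishing of $\Hom_{\cD_1}(E, F[k])$ for $E, F \in \cA_1$ and $k < 0$, is immediate: it agrees with $\Hom_\cD(E, F[k])$ because $\cD_1$ is a full triangulated subcategory of $\cD$, and this vanishes since $\cA$ is already the heart of a bounded t-structure on $\cD$. So all the substantive content lies in condition (ii), producing for each $E \in \cD_1$ a finite filtration whose cones lie in $\cA_1$ up to shift.

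For this I would start from the filtration of $E$ that exists in $\cD$, whose cones are (up to shift) the cohomology objects $H^i_\cA(E) \in \cA$, and reduce to showing that each $H^i_\cA(E)$ actually sits in $\cD_1$. The right detection tool is to promote the spanning class $\cG$ of $\cD_2$ to a criterion on $\cD$: an object $F \in \cD$ lies in $\cD_1 = \cD_2^\perp$ iff $\Hom_\cD(G, F[p]) = 0$ for all $G \in \cG$ and all $p \in \ZZ$. Indeed, writing the semiorthogonal triangle $F_2 \to F \to F_1$ of $F$ with $F_i \in \cD_i$, the vanishing $\Hom(\cD_2, \cD_1[p]) = 0$ gives $\Hom(G, F[p]) \cong \Hom(G, F_2[p])$, and the spanning-class property in $\cD_2$ then forces $F_2 = 0$.

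The main computation compares $\Hom(G, E[\bullet])$ with $\Hom(G, H^i_\cA(E)[\bullet])$ via the cohomology spectral sequence
\[E_2^{p,q} = \Hom_\cD(G, H^q_\cA(E)[p]) \;\Longrightarrow\; \Hom_\cD(G, E[p+q]).\]
The hypotheses squeeze $E_2$ into the two rows $p \in \{0,1\}$: the $p < 0$ vanishing comes from the t-structure axiom (both $G$ and $H^q_\cA(E)$ lie in $\cA$), while the $p > 1$ vanishing is exactly the hypothesis of the lemma. Every differential $d_r$ with $r \geq 2$ has $p$-jump at least $2$, hence leaves this strip, so $E_2 = E_\infty$. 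Since $E \in \cD_1$ the abutment vanishes in every total degree, forcing all $E_\infty^{p,q} = 0$ and therefore all $E_2^{p,q} = 0$. In particular $\Hom_\cD(G, H^i_\cA(E)[p]) = 0$ for every $G \in \cG$ and every $p$, so $H^i_\cA(E) \in \cD_1$ by the criterion above. Boundedness of the induced t-structure is automatic since the lifted filtration is already finite. The main subtlety I anticipate is the spectral-sequence bookkeeping: the two-row width matches exactly the minimum $p$-shift of the $d_r$'s, which is why the lemma asks for the sharp bound $\Hom(G, \cA[p]) = 0$ for $p > 1$ rather than $p > 0$.
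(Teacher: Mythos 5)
Your proposal is correct, and since the paper only quotes this lemma from \cite{BLM+2023} without reproving it, the comparison is with the cited source: your argument (detect membership in \(\cD_{1}=\cD_{2}^{\perp}\) via the spanning class, then kill \(\Hom(G,H^{q}_{\cA}(E)[p])\) using the two-row degeneration forced by the heart vanishing for \(p<0\) and the hypothesis for \(p>1\)) is essentially the standard proof there, with the spectral sequence packaging the same long-exact-sequence induction on the truncation triangles. The only step left implicit — that once all \(H^{q}_{\cA}(E)\) lie in \(\cA\cap\cD_{1}\) the intermediate objects of the filtration lie in \(\cD_{1}\), so \cref{lem:1} applies inside \(\cD_{1}\) — is immediate since \(\cD_{1}\) is closed under cones.
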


\begin{corollary}(\cite[Corollary 4.4]{BLM+2023})
  \label{cor:1}
  Let \(\cA \subset \cD\) be the heart of a bounded t-structure and \(\cG\) a spanning class for \(\cD_{2}\) such that \(\cG \subset \cA\) and \(S(\cG) \subset \cA_{1}\). Then \(\cA_{1} := \cA \cap \cD_{1}\) is the heart of a bounded t-structure on \(\cD_{1}\).
\end{corollary}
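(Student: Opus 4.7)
The plan is to deduce this directly from \cref{lem:5}: the hypotheses of that lemma differ from those given here only by the extra requirement that $\Hom_{\cD}(G, F[p]) = 0$ for all $G \in \cG$, $F \in \cA$, and $p > 1$. So the only real content to verify is that this higher-Ext vanishing is forced by the condition $S(\cG) \subset \cA_{1}$. Once that is done, \cref{lem:5} produces $\cA_{1} = \cA \cap \cD_{1}$ as the heart of a bounded t-structure on $\cD_{1}$ and we are finished.

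To obtain the vanishing, I would invoke Serre duality on the ambient category $\cD$. For any $G \in \cG$, $F \in \cA$, and $p \in \ZZ$, there is a natural isomorphism
\[\Hom_{\cD}(G, F[p]) \cong \Hom_{\cD}(F[p], S(G))^{*} \cong \Hom_{\cD}(F, S(G)[-p])^{*}.\]
By hypothesis $S(G) \in \cA_{1} \subset \cA$, so both $F$ and $S(G)[-p]$ involve only objects of the heart $\cA$ and its shifts. Condition (1) of \cref{lem:1} applied to $\cA$ then gives $\Hom_{\cD}(F, S(G)[-p]) = 0$ whenever $-p < 0$, i.e.\ whenever $p > 0$. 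In particular the vanishing holds for all $p > 1$, as required by \cref{lem:5}. The remaining hypotheses of \cref{lem:5} are already in hand: $\cG$ is a spanning class of $\cD_{2}$ and $\cG \subset \cA$ by assumption, so \cref{lem:5} applies verbatim.

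There is no real technical obstacle in this argument; the whole corollary is essentially a bookkeeping consequence of \cref{lem:5}. The conceptual content of the trick is worth underlining: the a priori hard-to-check condition — vanishing of $\Hom(G, F[p])$ for $p > 1$ with $F$ ranging over all of $\cA$ — is converted by Serre duality into the much easier condition that $S(G)$ belongs to $\cA$, since negative Ext-groups between objects of a heart vanish automatically. The genuine effort in applying this corollary will therefore be shifted to the geometric input: one must exhibit a spanning class for $\cD_{2}$ whose image under $S$ lands in the candidate heart $\cA_{1}$. In the blow-up setting of \cref{thm:1}, this is exactly what will dictate the right twists and shifts on the exceptional objects $\cO_{E}(-kE)$ when constructing the induced t-structures later in the paper.
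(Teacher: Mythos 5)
Your argument is correct and is exactly the intended derivation: the paper gives no proof of \cref{cor:1} (it cites \cite[Corollary 4.4]{BLM+2023}), and that proof is precisely this Serre-duality reduction to \cref{lem:5}. One caveat: the hypothesis \(S(\cG) \subset \cA_{1}\) should be read as \(S(\cG) \subset \cA[1]\) (a typo; compare its use in \cref{sec:surface-case}, where \(S(\cO_{C}) = \cO_{C}(-1)[2] \in \cB_{f^{*}\omega}[1]\)), under which your duality computation \(\Hom(G, F[p]) \cong \Hom(F, S(G)[-p])^{*}\) gives vanishing exactly for \(p > 1\) rather than \(p > 0\) --- still precisely what \cref{lem:5} requires.
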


And with further conditions, we can induce Bridgeland stability condition on semiorthogonal components.

\begin{proposition}(\cite[Proposition 5.1]{BLM+2023})
  \label{prop:1}
  Let \(\sigma = (\cA, Z)\) be a weak stability condition on \(\cD = \langle \cD_{1}, \cD_{2} \rangle\), where \(\cD_{2} = \langle E_{1}, \cdots, E_{m} \rangle\) with the following properties for all \(i = 1, \cdots, m\):
  \begin{enumerate}
    \item \(E_{i} \in \cA\)
    \item \(\cA \cap \cD_{1}\) is a heart of a bounded t-structure on \(\cD_{1}\).
    \item \(Z(E_{i}) \neq 0\).
  \end{enumerate}
  Assume moreover that there are no object \(0 \neq F \in \cA_{1} := \cA \cap \cD_{1}\) with \(Z(F) = 0\). Then the pair \(\sigma_{1} = (\cA_{1}, Z_{1} := Z|_{K(\cD_{1})})\) is a stability condition on \(\cD_{1}\).
\end{proposition}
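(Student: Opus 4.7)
My plan is to verify the three defining properties of a stability condition for $\sigma_1 = (\cA_1, Z_1)$: (i) $Z_1$ is a strict stability function on $\cA_1$, (ii) Harder--Narasimhan filtrations exist, and (iii) the support property holds. The stability function axiom (i) is immediate: for $0 \neq F \in \cA_1 \subset \cA$, the weak stability function property of $Z$ on $\cA$ yields $\Im Z(F) \geq 0$, and $\Re Z(F) \leq 0$ when $\Im Z(F) = 0$. The hypothesis that $Z(F) \neq 0$ for all $0 \neq F \in \cA_1$ then rules out $\Re Z(F) = 0$ in this case, upgrading the inequality to the strict $\Re Z(F) < 0$.

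The main step is (ii), the existence of HN filtrations in $\cA_1$. For $F \in \cA_1$, my approach is to produce a maximal destabilizing subobject inductively by setting $\mu_1^+(F) := \sup\{\mu_\sigma(G) : 0 \neq G \subset F \text{ in } \cA_1\}$. Any such $G$ is also a subobject of $F$ in $\cA$, so $\mu_1^+(F) \leq \phi_\sigma^+(F) < \infty$. Combined with the discreteness of $Z \circ v$ restricted to classes appearing in such chains and the bounded t-structure on $\cD_1$, one extracts a subobject realizing the supremum; iterating on $F/G \in \cA_1$ yields the HN filtration. The main obstacle lies here: $\cA_1 = \cA \cap \cD_1$ is not a priori closed under subobjects formed inside $\cA$, so one cannot transport the HN filtration of $F$ directly from $\cA$ to $\cA_1$. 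The hypotheses $E_i \in \cA$ and $Z(E_i) \neq 0$ are essential, since they ensure that the left adjoint projection $\cD \to \cD_1$ of the semiorthogonal decomposition is compatible with $\cA$, sending subobjects in $\cA$ to objects with controlled cohomology in $\cA_1$ and preventing $E_i$-contributions from being invisible to $Z$.

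For the support property (iii), I would first show that any $\sigma_1$-semistable $F \in \cA_1$ is also $\sigma$-semistable in $\cA$. If $G \subset F$ were $\sigma$-destabilizing in $\cA$, its projection to $\cD_1$ via the semiorthogonal decomposition would, by the cohomological control obtained in step (ii), give a $\sigma_1$-destabilizing subobject in $\cA_1$, contradicting $\sigma_1$-semistability of $F$. Once this is established, the support inequality for $\sigma$ furnished by \cref{lem:4} with quadratic form $Q$ transfers verbatim to $\sigma_1$ with the same $Q$, which is automatically negative definite on $\ker Z_1 \subset \ker Z$, completing the verification.
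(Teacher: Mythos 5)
Your step (i) is fine, but both substantive steps have genuine gaps. For (ii), the entire content of the HN statement is exactly \cref{lem:10} (= [BLM+2023, Lemma 5.2]), which the paper imports for this purpose, and your sketch does not reprove it: the appeal to ``discreteness of \(Z \circ v\)'' is unjustified and false in general (\(\Lambda\) has finite rank, but \(Z(\Lambda)\) can be dense in \(\CC\)), so neither the attainment of the supremum \(\mu_1^+(F)\) nor the termination of the iteration on quotients follows; the correct argument rules out infinite chains of subobjects/quotients with monotone slopes in \(\cA_1\) by playing them against the HN filtrations and support property of the ambient weak stability condition \(\sigma\), not against discreteness. Your accompanying claim that the hypotheses \(E_i \in \cA\) and \(Z(E_i) \neq 0\) make the projection \(\cD \to \cD_1\) ``compatible with \(\cA\)'' is also unsupported: the projection functors of a semiorthogonal decomposition are not t-exact in general, and nothing in (1)--(3) yields the asserted cohomological control (indeed, the difficulty that \(\cA_1\) is not closed under subobjects taken in \(\cA\) is precisely why \cref{lem:10} avoids projections altogether).

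For (iii), the key claim that every \(\sigma_1\)-semistable \(F \in \cA_1\) is \(\sigma\)-semistable is not established and is not available under hypotheses (1)--(3). If \(G \subset F\) destabilizes in \(\cA\), its projection to \(\cD_1\) need not lie in \(\cA_1\), the induced map to \(F\) need not be a monomorphism in any heart, and, most importantly, since \(Z(E_i) \neq 0\) the projection changes the central charge by the \(\cD_2\)-contributions, so no slope inequality survives. Contrast this with \cref{lem:6} and \cref{prop:5} in the paper, where exactly the opposite hypotheses (\(j^!\cA \subset \cA_1\) together with \(Z\) vanishing, or of controlled sign, on \(S(\cD_2) \cap \cA\)) are imposed in order to transfer (semi)stability across the decomposition; here those hypotheses fail by design. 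The support property therefore cannot be obtained by restricting the quadratic form along your asserted implication; in [BLM+2023] it is deduced from the support property of \(\sigma\) by controlling the \(\sigma\)-HN factors of \(F\), not from a semistability comparison of this naive form. As written, both the HN step and the support step are gaps, while the paper's own route is the citation of [BLM+2023, Proposition 5.1] with \cref{lem:10} as the isolated HN ingredient.
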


The following lemma in the proof of \cref{prop:1} is will be used independently later.
\begin{lemma}(\cite[Lemma 5.2]{BLM+2023})
  \label{lem:10}
  Let \((\cA, Z)\) be a weak stability condition. Let \(\cA \subset \cA\) be an abelian subcategory such that the inclusion functor is exact. Assume moreover that \(Z\) restricted to \(K(\cA_{1})\) is a stability function. Then Harder-Narasimhan filtration exists in \(\cA_{1}\) for the stability function \(Z\).
\end{lemma}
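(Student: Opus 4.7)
Proof proposal.

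The plan is to emulate Bridgeland's standard construction of HN filtrations, working inside $\cA_1$ but exploiting the weak stability condition on the ambient $\cA$ for the necessary finiteness inputs. Fix $E \in \cA_1$; I will produce a $\mu_Z$-semistable maximally destabilizing subobject $F \subset E$ in $\cA_1$, then iterate on $E/F \in \cA_1$.

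The starting observations are as follows. Because the inclusion $\cA_1 \hookrightarrow \cA$ is exact and fully faithful, every $\cA_1$-subobject of $E$ is also an $\cA$-subobject, so its slope is bounded above by $\phi^+_\sigma(E) < \infty$, and hence
\[\mu^+(E) := \sup\{\mu_Z(F) : 0 \neq F \subset E \text{ in } \cA_1\}\]
is finite. Moreover $\cA_1$ is closed under taking sums of its subobjects inside an $\cA_1$-object: for $F_1, F_2 \subset E$ in $\cA_1$, the image of $F_1 \oplus F_2 \to E$ computed in the abelian category $\cA_1$ agrees, by exactness of the inclusion, with the ambient sum $F_1 + F_2 \subset E$ in $\cA$. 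Since $Z|_{K(\cA_1)}$ is a stability function (not merely weak), the slope $\mu_Z$ is everywhere defined on nonzero objects of $\cA_1$ and satisfies the usual see-saw inequality on short exact sequences.

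To extract the maximal destabilizer, pick a sequence $F_n \subset E$ in $\cA_1$ with $\mu_Z(F_n) \to \mu^+(E)$ and form the ascending chain $G_n := F_1 + \cdots + F_n$ inside $\cA_1$. Standard estimates on $Z(G_n) = Z(G_{n-1}) + Z(F_n) - Z(G_{n-1} \cap F_n)$ force $\mu_Z(G_n) \to \mu^+(E)$ as well. Once this chain is shown to stabilize at some $G_N =: F$, the object $F$ has slope exactly $\mu^+(E)$ and is automatically $\mu_Z$-semistable in $\cA_1$; passing to $E/F \in \cA_1$ and iterating produces the HN filtration, while a dual descending-chain argument terminates the iteration in finitely many steps.

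The main obstacle is precisely this termination of ascending (resp.\ descending) chains of subobjects with strictly monotone slopes. My plan here is: each strict inclusion $G_{n-1} \subsetneq G_n$ in $\cA_1$ contributes a nonzero quotient $G_n/G_{n-1} \in \cA_1$ whose $Z$-class lies in a bounded region of the closed upper half-plane (imaginary part bounded by $\Im Z(E)$, argument confined by the slope constraint); if the chain were infinite, some subsequence would accumulate near $Z = 0$, contradicting the support property transferred from $\cA$ together with the hypothesis that $Z|_{\cA_1}$ is a proper stability function (which rules out nonzero objects of $\cA_1$ with $Z=0$). Making this compactness argument fully rigorous — in particular the inheritance of the support condition on $K(\cA_1)$ from that on $\cA$, and the reduction of the infinite-chain scenario to either a nonzero $Z=0$ object in $\cA_1$ or a failure of the support property in $\cA$ — is the delicate step I would need to pin down.
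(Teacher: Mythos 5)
You are not actually competing with an argument in this paper: \cref{lem:10} is quoted verbatim from \cite{BLM+2023} (Lemma 5.2 there) and is not reproved here, so your proposal amounts to reproving that cited lemma — and the entire mathematical content of it is exactly the finiteness step that you leave open at the end. The routine parts of your plan are fine: exactness of \(\cA_{1} \hookrightarrow \cA\) does make \(\cA_{1}\)-subobjects, images and the identity \(Z(G_{n}) = Z(G_{n-1}) + Z(F_{n}) - Z(G_{n-1} \cap F_{n})\) behave as you say. But the sketched ``compactness'' argument for termination does not work as described. The support condition constrains only \(\sigma\)-semistable objects of \(\cA\); the successive quotients \(G_{n}/G_{n-1}\) (or the terms of a destabilizing chain) are in general not \(\sigma\)-semistable, and replacing them by their \(\sigma\)-HN factors does not help, since those factors typically leave \(\cA_{1}\) and, for a \emph{weak} stability condition, may even have \(Z = 0\), so the support condition gives no lower bound on \(|Z|\) for them. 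Moreover, the only boundedness you actually have is \(\sum_{n} \Im Z(G_{n}/G_{n-1}) \leq \Im Z(E)\): this forces the imaginary parts of the quotients to tend to \(0\) but says nothing about their real parts, so the classes do not ``accumulate near \(Z = 0\)''; and even if they did, no contradiction would follow, because the hypothesis that \(Z|_{K(\cA_{1})}\) is a stability function excludes \(Z = 0\) exactly, not approximately, and the image \(Z(\Lambda) \subset \CC\) of a finite-rank lattice need not be discrete.

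Two further problems in the setup. First, even if your particular chain \(G_{n}\) stabilizes, that does not show the supremum \(\mu^{+}(E)\) is attained by its limit; in the classical sheaf-theoretic version of this argument the attainment uses discreteness of the possible slopes, which is not available here, so ``\(F\) has slope exactly \(\mu^{+}(E)\)'' is unjustified. Second, \(\phi^{+}_{\sigma}(E) < \infty\) is false in general for a weak stability condition, since \(\sigma\)-HN factors with \(\Im Z = 0\) have slope \(+\infty\) (and nonzero objects of \(\cA_{1}\) with \(\Im Z = 0\), hence slope \(+\infty\), are perfectly allowed); one has to argue with phases in \((0,1]\) rather than with a finite \(\mu^{+}\). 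As it stands, the proposal reduces the lemma to precisely the finiteness assertion it was meant to establish. To close it, one should instead verify Bridgeland's two chain conditions for \((\cA_{1}, Z)\) directly, exploiting the exactness of the inclusion together with the ambient weak stability condition — the existence of \(\sigma\)-HN filtrations in \(\cA\) and the support property applied to \(\sigma\)-semistable objects — rather than a metric accumulation argument on classes; that verification is where the real work of \cite{BLM+2023} lies.
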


\section{Surface case}
\label{sec:surface-case}

Let \(f: X \to Y\) be a contraction of a single \((-1)\)-curve \(C \subset X\) to a point in \(Y\). We define the hearts of perverse t-structures
\[^{p}\Per(X/Y) \subset D^{b}\Coh(X)\]
for \(p \in \ZZ\), introduced in \cite{Bridgeland2002,Bergh2004}. Let \(\cC\) be the triangulated subcategory of \(D^{b}\Coh(X)\), defined by
\[\cC := \{E \in D^{b}\Coh(X) : R f_{*} E = 0\}\]
The standard t-structure on \(D^{b}\Coh(X)\) induces a t-structure on \(\cC\), \((\cC^{\leq 0}, \cC^{\geq 0})\). We define \(^{p}\Per(X/Y)\) to be
\[^{p}\Per(X/Y) := \{E \in D^{b}\Coh(X) : Rf_{*}E \in \Coh(Y), \Hom(\cC^{< p}, E) = \Hom(E, \cC^{> p}) = 0\}\]
And we denote \(\Per(X/Y) := ^{-1} \Per(X/Y)\).
Let \(\cE := \cO_{X} \oplus \cO_{X}(-C), \cA := f_{*} \cEnd(\cE)\)
\begin{theorem}(\cite{Bergh2004})
  We have a derived equivalence
  \[\Phi:= Rf_{*} R\cHom(\cE, *) : D^{b}\Coh(X) \simeq D^{b}\Coh(\cA)\]
  which restricts to an equivalence between \(\Per(X/Y)\) and \(\Coh(\cA)\).
\end{theorem}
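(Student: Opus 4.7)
The strategy is to recognise $\cE = \cO_X \oplus \cO_X(-C)$ as a relative tilting generator for $f$ and then derive both halves of the statement from a standard relative Morita/tilting principle.

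First I would verify the vanishing $R^{i}f_{*}\cEnd(\cE) = 0$ for $i > 0$, which simultaneously confirms that $\cA = f_{*}\cEnd(\cE)$ agrees with $Rf_{*}\cEnd(\cE)$ in the derived category. Decomposing $\cEnd(\cE) \cong \cO_X^{\oplus 2} \oplus \cO_X(C) \oplus \cO_X(-C)$, it suffices to check the vanishing on each line-bundle summand. For a $(-1)$-curve contraction in dimension two, $Rf_{*}\cO_X = \cO_Y$ is classical, and the remaining two summands reduce, via the theorem on formal functions applied along the exceptional fibre $C \cong \PP^{1}$ (with normal bundle $\cO_{\PP^{1}}(-1)$), to the vanishings $H^{>0}(\PP^{1}, \cO(\pm 1)) = 0$.

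Next I would check that $\cE$ classically generates $D^{b}\Coh(X)$. The short exact sequence $0 \to \cO_X(-C) \to \cO_X \to \cO_C \to 0$ places $\cO_C$, and hence every sheaf set-theoretically supported on $C$, inside the triangulated envelope $\langle \cE \rangle$; away from $C$ the map $f$ is an isomorphism, so a standard dévissage with an ample line bundle pulled back from $Y$ recovers the remaining skyscrapers. Combined with the tilting vanishing, the relative Morita theorem in \cite{Bergh2004} then gives the equivalence $\Phi : D^{b}\Coh(X) \xrightarrow{\sim} D^{b}\Coh(\cA)$.

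For the restriction to hearts my plan is to prove the single inclusion $\Phi(\Per(X/Y)) \subset \Coh(\cA)$ and then invoke \cref{lem:2}, since both sides are then hearts of bounded t-structures on $D^{b}\Coh(\cA)$. Unwinding, $\Phi(E) \cong Rf_{*}E \oplus Rf_{*}(E(C))$ as an object of $D^{b}(\cA)$, so the requirement is that both summands sit in degree zero. The first is built into the definition of $\Per(X/Y)$. For the second I would use that $\cC$ is generated under shifts and extensions by the single simple object $\cO_C(-1)$, the unique simple sheaf on $C$ with vanishing $Rf_{*}$; the perverse vanishings $\Hom(\cC^{<-1}, E) = \Hom(E, \cC^{>-1}) = 0$ then translate into explicit $\Ext$-group vanishings against $\cO_C(-1)[k]$, which combined with the twisted Koszul triangle relating $\cO_X(C)$ to $\cO_X$ and $\cO_C(-1)$ yield the required degree-zero concentration of $Rf_{*}(E(C))$. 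The main obstacle I expect is precisely this last translation: bridging the abstract perverse vanishings and the concrete cohomology-sheaf conditions demands careful book-keeping of the relevant resolutions, and is where the specific geometry of the $(-1)$-curve — its normal bundle and the resulting small \emph{ext}-dimensions — enters essentially.
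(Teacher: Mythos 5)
The paper itself gives no argument for this statement --- it is imported verbatim from Van den Bergh (cf.\ \cite{Bergh2004} and its rephrasing in \cite{Toda2013b}) --- so what you have written is a reconstruction of the tilting proof rather than a parallel to anything in the text. Your overall strategy (relative $\Ext$-vanishing for $\cEnd(\cE)$, generation, the relative Morita theorem, then the one inclusion $\Phi(\Per(X/Y)) \subset \Coh(\cA)$ closed off by \cref{lem:2}) is the right one, and the heart-comparison trick via \cref{lem:2} is clean. Two points need repair or completion, though.

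First, the generation step as stated has a genuine gap: the triangulated (thick) envelope of $\cE=\cO_X\oplus\cO_X(-C)$ contains $\cO_C$, but \emph{not} every sheaf set-theoretically supported on $C$ --- for instance $\cO_C(-1)$ and the skyscrapers $\cO_x$, $x\in C$, are not in it (already on the level of $K$-theory the span of $[\cO_X],[\cO_X(-C)]$ misses $[\cO_x]$; on the fibre the summands restrict to $\cO,\cO(1)$ and $\langle\cO_C\rangle$ alone does not reach $\cO_C(-1)$). Twisting by $f^*$(ample) does not help along $C$. What Van den Bergh's relative theorem actually requires is generation in the spanning sense over $Y$: if $Rf_*R\cHom(\cE,E)=0$, i.e.\ $Rf_*E=Rf_*(E\otimes\cO_X(C))=0$, then $E=0$. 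This is immediate from the fact you yourself use later, namely $\cC=\langle\cO_C(-1)\rangle$ with $\cO_C(-1)$ exceptional: $Rf_*E=0$ forces $E$ to be a direct sum of shifts of $\cO_C(-1)$, and then $Rf_*(E\otimes\cO_X(C))$ is a sum of shifts of $Rf_*\cO_C(-2)\neq 0$. Replace the d\'evissage by this orthogonality computation. Second, the ``book-keeping'' you defer at the end does close, but it is where both perverse conditions are used: writing $T=\cH^0(E)$, $F=\cH^{-1}(E)$, degree-zero concentration of $Rf_*(E\otimes\cO_X(C))$ amounts to $R^1f_*(T\otimes\cO_X(C))=0$ and $f_*(F\otimes\cO_X(C))=0$. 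The first follows from $R^1f_*T=0$ together with $\Hom(T,\cO_C(-1))=0$, via the sequence $0\to\cO_X\to\cO_X(C)\to\cO_C(-1)\to0$ and Serre duality on the fibre $C\cong\PP^1$. The second is not a formal consequence of $f_*F=0$ alone: you must check that every nonzero quotient of $\cO_X(-C)$ supported on $C$ has nonzero $f_*$ (its lowest nonzero piece in the $I_C$-adic filtration is a quotient of $\cO_C(m+1)$ with $m\geq 0$, hence has sections), so that a nonzero map $\cO_X(-C)\to F$ would contradict $f_*F=0$. With these two repairs your sketch becomes a complete proof of the quoted theorem in this special case.
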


We define
\[\Per_{\leq i}(X/Y) := \{E \in \Per(X/Y) : \Phi(E) \in \Coh_{\leq i}(\cA)\}\]
and write \(\Per_{0}(X/Y) := \Per_{\leq 0}(X/Y)\).

\begin{proposition}
  \label{prop:2}
  (\cite[Proposition 3.4]{Toda2013b})
  \begin{enumerate}
    \item The category \(\Per_{0}(X/Y)\) is a finite length abelian category with simple objects given by \[\cO_{C},\ \cO_{C}(-1)[1],\ \cO_{x},\ x \in X\backslash C.\]
    \item An object \(E \in D^{b}\Coh(X)\) is an object in \(\Per(X/Y)\) if and only if \(\cH^{i}(E) = 0\) for \(i \neq 0, -1\) and \(\Hom(\cH^{0}(E), \cO_{C}(-1)) = 0\) and
    \[R^{1}f_{*} \cH^{0}(E) = f_{*} \cH^{-1}(E) = 0\]
  \end{enumerate}
\end{proposition}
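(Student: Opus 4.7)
The plan is to first describe the subcategory $\cC$ explicitly, and then decode the definition of $\Per(X/Y)$ into the concrete conditions in part (2). A convenient unifying picture is that $\Per(X/Y)$ arises as the tilt of $\Coh(X)$ at the torsion pair $\cT = \{F \in \Coh(X) : R^1 f_* F = 0,\ \Hom(F, \cO_C(-1)) = 0\}$, $\cF = \{F \in \Coh(X) : f_* F = 0\}$; this is implicit in Toda's setup and makes the ``two-term cohomology'' statement transparent. Preliminary step: identify $\cC = \langle \cO_C(-1) \rangle$ as triangulated subcategory. Any object of $\cC$ is set-theoretically supported on $C$ (since $f$ is an isomorphism elsewhere), and filtering by powers of $I_C$ reduces the problem to coherent sheaves on $C \cong \PP^1$. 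Since $R f_*|_C = R\Gamma(\PP^1, -)$, the classification of sheaves on $\PP^1$ shows that a sheaf on $C$ has $R f_* = 0$ iff it is a direct sum of $\cO_C(-1)$, which gives the claim. Consequently the standard t-structure on $\cC$ has heart generated under extension by $\cO_C(-1)$, and $\cC^{\leq -2}$, $\cC^{\geq 0}$ are generated by $\cO_C(-1)[\geq 2]$ and $\cO_C(-1)[\leq 0]$ respectively.

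For part (2), the orthogonality conditions in the definition of $\Per(X/Y)$ thus rewrite as $\Ext^j(\cO_C(-1), E) = 0$ for $j \leq -2$ and $\Ext^j(E, \cO_C(-1)) = 0$ for $j \leq 0$. Combined with $R f_* E \in \Coh(Y)$, I would first verify the cohomological concentration $\cH^i(E) = 0$ for $i \notin \{-1, 0\}$; the cleanest route is to exhibit $\Per(X/Y)$ as the tilt at $(\cT, \cF)$ above, after which concentration in $[-1, 0]$ is automatic from the definition of a tilt. Once this is in hand, the rest is bookkeeping with the standard triangle $\cH^{-1}(E)[1] \to E \to \cH^0(E)$: applying $R f_*$ and using vanishing of $R^{\geq 2} f_*$ on a surface gives $R f_* E \in \Coh(Y)$ iff $f_* \cH^{-1}(E) = 0$ and $R^1 f_* \cH^0(E) = 0$, while applying $\Hom(-, \cO_C(-1))$ to the same triangle reduces $\Hom(E, \cO_C(-1)) = 0$ to $\Hom(\cH^0(E), \cO_C(-1)) = 0$. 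The remaining negative $\Ext$ vanishings are automatic because $E$ lives in degrees $[-1, 0]$ and $\cO_C(-1)$ is a sheaf, and the converse direction is a direct check using the same long exact sequences.

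For part (1), I would pass through Bergh's equivalence. By definition $\Phi$ identifies $\Per_0(X/Y)$ with $\Coh_{\leq 0}(\cA)$, the category of coherent $\cA$-modules with zero-dimensional support. This category decomposes as a direct sum indexed by closed points $y \in Y$, each summand being the category of finite-dimensional modules over the finite-dimensional $\kappa(y)$-algebra $\cA \otimes \kappa(y)$, and is therefore of finite length; hence so is $\Per_0(X/Y)$. To enumerate the simples: for $y \neq y_0 := f(C)$, $\cA \otimes \kappa(y) \cong \kappa(y)$ since $f$ is an isomorphism there, contributing a single simple $\Phi^{-1}(\kappa(y)) = \cO_x$ for each $x \in X \setminus C$; over $y_0$, the fibre algebra $\cA \otimes \kappa(y_0)$ has exactly two simple modules coming from the two summands $\cO_X,\ \cO_X(-C)$ of $\cE$, and a direct computation on $\cE|_C$ identifies their $\Phi^{-1}$-preimages as $\cO_C$ and $\cO_C(-1)[1]$.

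The main obstacle is the cohomological concentration step in (2): showing that every $E \in \Per(X/Y)$ has $\cH^i(E) = 0$ outside $\{-1, 0\}$. Once this is secured, everything reduces to long exact sequences and the classification of sheaves on $\PP^1$. I would most likely handle it through the torsion-pair description above, verifying directly that $(\cT, \cF)$ is a torsion pair on $\Coh(X)$ and that the resulting tilted heart matches $\Per(X/Y)$ (using the explicit generator $\cO_C(-1)$ of $\cC$); a homological-dimension argument for the tilting bundle $\cE$ on the $\cA$-module side would be an alternative.
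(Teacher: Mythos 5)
Note first that the paper does not prove this proposition at all: it is quoted verbatim from Toda (and ultimately goes back to Bridgeland and Van den Bergh), so the only comparison available is with that standard argument, whose skeleton your outline follows. The genuine gap is exactly the step you yourself label the main obstacle: that every \(E\) satisfying the orthogonality conditions defining \(\Per(X/Y)\) has \(\cH^{i}(E)=0\) for \(i\neq 0,-1\), equivalently that \(\Per(X/Y)\) coincides with the tilt of \(\Coh(X)\) at your torsion pair \((\cT,\cF)\). Saying you would ``verify that the tilted heart matches \(\Per(X/Y)\)'' is a restatement of the claim, not an argument, and nothing in your sketch supplies the nontrivial inclusion \(\Per(X/Y)\subset\langle\cF[1],\cT\rangle_{\ext}\). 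Two standard ways to close it: (a) if \(\cH^{q}(E)\neq 0\) for some \(q>0\) (resp.\ \(q<-1\)), take \(q\) maximal (resp.\ minimal); the spectral sequence \(R^{p}f_{*}\cH^{q}(E)\Rightarrow R^{p+q}f_{*}E\), together with \(R^{\geq 2}f_{*}=0\) and \(Rf_{*}E\in\Coh(Y)\), forces \(Rf_{*}\cH^{q}(E)=0\), so \(\cH^{q}(E)\in\cC\) is a sum of \(\cO_{C}(-1)\)'s, and the canonical truncation map \(E\to\cH^{q}(E)[-q]\) (resp.\ \(\cH^{q}(E)[-q]\to E\)) contradicts \(\Hom(E,\cC^{>-1})=0\) (resp.\ \(\Hom(\cC^{<-1},E)=0\)); or (b) prove only the easy inclusion \(\langle\cF[1],\cT\rangle_{\ext}\subset\Per(X/Y)\) (a diagram chase with \(\cH^{-1}(E)[1]\to E\to\cH^{0}(E)\), exactly the ``bookkeeping'' you describe), then invoke that \(\Per(X/Y)\) is itself the heart of a bounded t-structure — this is already available in the section via Van den Bergh's equivalence with \(\Coh(\cA)\) — and conclude by \cref{lem:2} that two nested bounded hearts coincide. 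Either of these completes your plan; without one of them part (2) is not proved.

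Smaller repairs are also needed. In the preliminary step, filtering by powers of \(I_{C}\) is not by itself enough, since the graded pieces of an object of \(\cC\cap\Coh(X)\) need not individually be \(Rf_{*}\)-acyclic; argue instead that \(I_{C}F=0\) (e.g.\ \(I_{C}F\) is a quotient of the globally generated sheaf \((I_{C}/I_{C}^{2})\otimes F/I_{C}F\), while subsheaves of \(F\) have no sections), then apply the \(\PP^{1}\) classification, and use \(\Ext^{1}_{X}(\cO_{C}(-1),\cO_{C}(-1))=0\) if you want a direct sum rather than an iterated extension (a filtration suffices for everything you use). In part (1), for \(y\neq f(C)\) the fibre algebra is the matrix algebra \(M_{2}(\kappa(y))\), Morita equivalent to \(\kappa(y)\) but not isomorphic to it; the conclusion (one simple, equal to \(\Phi(\cO_{x})\)) is unaffected. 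Finally, ``exactly two simples over \(y_{0}\)'' requires that the idempotents coming from the summands \(\cO_{X},\cO_{X}(-C)\) remain primitive and inequivalent in the fibre: primitivity follows from \(e_{i}\cA e_{i}\cong\cO_{Y}\), and inequivalence from your computation that \(\Phi(\cO_{C})\) and \(\Phi(\cO_{C}(-1)[1])\) are one-dimensional and supported on different idempotents. With these points filled in, your route (which for part (1) argues on the \(\cA\)-module side, whereas Toda filters objects of \(\Per_{0}(X/Y)\) directly using part (2)) does yield a complete proof.
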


Let \(\omega\) be an ample divisor on \(Y\), then we define the slope function on \(\Per(X/Y)\):
\[\mu_{f^{*}\omega}(E) := \frac{\ch_{1}(E) \cdot f^{*}\omega}{\ch_{0}(E)}\]
if \(\ch_{0}(E) > 0\) and \(+\infty\) otherwise.

Let \((\cT_{f^{*} \omega}, \cF_{f^{*}\omega})\) be the pair of subcategories of \(\Per(X/Y)\) defined by
\[\cT_{f^{*} \omega} := \langle E: E \text{ is } \mu_{f^{*} \omega}\text{-semistable with } \mu_{f^{*}\omega}(E) > 0\rangle\]
\[\cF_{f^{*} \omega} := \langle E: E \text{ is } \mu_{f^{*} \omega}\text{-semistable with } \mu_{f^{*}\omega}(E) \leq 0\rangle\]
The pair \((\cT_{f^{*} \omega}, \cF_{f^{*}\omega})\) forms a torsion pair, and we define
\(\cB_{f^{*}\omega} \subset D^{b} \Coh(X)\) to be
\[\cB_{f^{*} \omega} := \langle \cF_{f^{*}\omega}[1], \cT_{f^{*} \omega} \rangle\]
and
\[Z_{f^{*} \omega}(E) = (- \ch_{2}(E) + \frac{\omega^{2}}{2} \ch_{0}(E)) + \sqrt{-1} \ch_{1}(E) \cdot f^{*} \omega\]
\begin{proposition}(\cite[Lemma 3.12,Proposition 3.13]{Toda2013b})
  \((Z_{f^{*}\omega}, \cB_{f^{*}\omega})\) is a Bridgeland stability condition.
\end{proposition}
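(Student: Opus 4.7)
The plan is to follow the BMT tilting blueprint adapted to the perverse heart. First, I would verify that $(\cT_{f^{*}\omega}, \cF_{f^{*}\omega})$ is a torsion pair on $\Per(X/Y)$ by producing Harder--Narasimhan filtrations for $\mu_{f^{*}\omega}$. To set this up, one checks that $W(E) := -\ch_{1}(E)\cdot f^{*}\omega + i\,\ch_{0}(E)$ is a weak stability function on $\Per(X/Y)$: nonnegativity of $\Im W$ uses \cref{prop:2}(2), since $f_{*}\cH^{-1}(E)=0$ forces $\cH^{-1}(E)$ to be supported on the contracted curve $C$, hence of rank zero, and therefore $\ch_{0}(E) = \ch_{0}(\cH^{0}(E)) \geq 0$; and when $\Im W = 0$ the sign of $\Re W$ is controlled by the projection formula $f^{*}\omega \cdot C = \omega \cdot f_{*}C = 0$, which makes all $C$-supported pieces contribute zero to $\ch_{1}\cdot f^{*}\omega$ while non-contracted $1$-dimensional pieces contribute nonnegatively. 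HN filtrations then follow from Noetherianness of $\Per(X/Y) \simeq \Coh(\cA)$.

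Once the torsion pair is in place, HRS tilting gives $\cB_{f^{*}\omega}$ as the heart of a bounded t-structure on $D^{b}\Coh(X)$. To prove that $Z_{f^{*}\omega}$ is a stability function, I would split $E \in \cB_{f^{*}\omega}$ into its $\cT_{f^{*}\omega}$-part $T$ and $\cF_{f^{*}\omega}[1]$-part $F[1]$: we get $\Im Z_{f^{*}\omega}(T) = \ch_{1}(T)\cdot f^{*}\omega \geq 0$ by definition of $\cT_{f^{*}\omega}$, and $\Im Z_{f^{*}\omega}(F[1]) = -\ch_{1}(F)\cdot f^{*}\omega \geq 0$ since $\mu_{f^{*}\omega}(F) \leq 0$. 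The strictness condition $\Im Z_{f^{*}\omega}(E) = 0 \Rightarrow \Re Z_{f^{*}\omega}(E) < 0$ is checked by reducing to the simple objects $\cO_{C}, \cO_{C}(-1)[1], \cO_{x}$ from \cref{prop:2}(1), each handled by a direct Chern-class computation using $C^{2}=-1$ (all give $\Re Z_{f^{*}\omega} = -\tfrac12$ or $-1$); the rank-positive case is dominated by the $\tfrac{\omega^{2}}{2}\ch_{0}$ term. Existence of HN filtrations in $\cB_{f^{*}\omega}$ then follows from Noetherianness of the tilted heart, exactly as in \cite{BMT2014}.

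The main obstacle is the support condition. Following the BMT strategy, I aim to exhibit a quadratic form, such as $Q(E) = (\ch_{1}(E)\cdot f^{*}\omega)^{2} - 2(f^{*}\omega)^{2}\,\ch_{0}(E)\,\ch_{2}(E)$, which is nonnegative on every $Z_{f^{*}\omega}$-semistable object and negative definite on $\ker Z_{f^{*}\omega}$, so as to apply \cref{lem:4}. The key input is a Bogomolov-type inequality for $\mu_{f^{*}\omega}$-semistables in $\Per(X/Y)$, which I would obtain by transferring via the derived equivalence $\Phi: D^{b}\Coh(X) \simeq D^{b}\Coh(\cA)$ and applying a Langer-type Bogomolov inequality for slope-semistable torsion-free modules over the finite sheaf of algebras $\cA$ on $Y$; Hodge index for the big and nef divisor $f^{*}\omega$ (with $(f^{*}\omega)^{2} = \omega^{2} > 0$) then converts this into the stated $Q$. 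The real subtlety lies in controlling constituents of $E$ supported on the contracted curve $C$, where $f^{*}\omega$ restricts to zero, and in verifying that $Q$ remains negative definite on $\ker Z_{f^{*}\omega}$ once these exceptional directions are accounted for; this is where the perverse structure enters in an essential way.
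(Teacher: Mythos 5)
The paper does not actually prove this proposition: it is quoted verbatim from Toda (Lemma 3.12 and Proposition 3.13 of \cite{Toda2013b}), so the relevant comparison is with Toda's argument. Your skeleton — slope HN filtrations on \(\Per(X/Y)\), HRS tilt, positivity checked on the simple objects of \(\Per_{0}(X/Y)\), support via a Bogomolov-type inequality and \cref{lem:4} — is the correct outline and matches Toda's strategy. But the two places where the real content sits are asserted rather than proved. First, the positivity check is not a triviality in the rank-positive case: for \(E = F[1]\) with \(F \in \cF_{f^{*}\omega}\) of positive rank and \(\ch_{1}(F)\cdot f^{*}\omega = 0\), the required statement \(\Re Z_{f^{*}\omega}(F[1]) < 0\) is exactly \(\ch_{2}(F) < \tfrac{\omega^{2}}{2}\ch_{0}(F)\), and a priori \(\ch_{2}(F)\) is unbounded; saying it is ``dominated by the \(\tfrac{\omega^{2}}{2}\ch_{0}\) term'' begs the question. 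This is precisely the slope-zero instance of the Bogomolov--Hodge-index inequality for \(\mu_{f^{*}\omega}\)-semistable perverse sheaves, so the stability-function step already needs the same hard input as the support property.

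Second, your proposed source for that input — a ``Langer-type Bogomolov inequality for slope-semistable torsion-free modules over the sheaf of algebras \(\cA\)'' — is not an available theorem; such inequalities exist for sheaves on varieties and in special settings (e.g.\ Azumaya algebras), not for \(f_{*}\cEnd(\cO_{X}\oplus\cO_{X}(-C))\), and even granting one you would still have to identify \(\mu_{f^{*}\omega}\)-semistability of \(E\) with slope-semistability of \(\Phi(E)\) over \(\cA\) and compare the relevant Chern data across the equivalence. Toda's actual route (and the mechanism this paper abstracts in \cref{lem:6} and uses in the proof of \cref{thm:2}) is to push forward: a \(\mu_{f^{*}\omega}\)-semistable object of \(\Per(X/Y)\) gives a \(\mu_{\omega}\)-semistable sheaf \(Rf_{*}E\) on \(Y\), with \(\ch_{0}\) and \(\ch_{1}\cdot\omega\) preserved and the \(\ch_{2}\)-discrepancy supported on the exceptional locus with controlled sign, so the classical Bogomolov inequality on \(Y\) yields the perverse one on \(X\); the \(C\)-supported constituents are handled through \cref{prop:2} rather than through module theory. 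Two further points need care: the support property must be verified for all \(Z_{f^{*}\omega}\)-semistable objects of the tilted heart \(\cB_{f^{*}\omega}\), not only for those coming from slope-semistable perverse sheaves; and the lattice must be taken to be the image of \((\ch_{0}, \ch_{1}\cdot f^{*}\omega, \ch_{2})\) for your quadratic form to be negative definite on \(\ker Z_{f^{*}\omega}\) — if the full \(\ch_{1}\) is retained, classes such as \((0, C, 0)\) lie in the kernel and \(Q\) vanishes on them, a point you flag but do not resolve.
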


Let \(D^{b}\Coh(X) = \langle D^{b}\Coh(Y), \cO_{C} \rangle\) be the standard semiorthogonal decomposition associated to blow-up of a point in a surface \cite{BO1995}. By the above construction we have that \(\cO_{c} \in \cB_{f^{*} \omega}(X/Y)\) and \(S(\cO_{C}) = \cO_{C}(-1)[2] \in \cB_{f^{*}\omega}[1]\), and \(Z(\cO_{C}) \neq 0\). Therefore, by applying \cref{prop:1}, we obtain a stability condition \((\cA_{1}, Z_{1})\) on \(D^{b}\Coh(Y)\).

\begin{theorem}
  \label{thm:2}
  \((\cA_{1}, Z_{1})\) is a geometric stability condition on \(Y\). More precisely,  \((\cA_{1}, Z_{1}) = \sigma^{2}_{\omega}\) in \cref{sec:revi-tilt-bridg}.
\end{theorem}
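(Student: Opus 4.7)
The plan is to identify $(\cA_1, Z_1)$ with $\sigma^2_\omega$ under the natural equivalence $Lf^*\colon D^b\Coh(Y) \xrightarrow{\sim} Lf^* D^b\Coh(Y)$, by verifying that hearts and central charges agree. The organizational device is \cref{lem:2}: both $\cA_1 = \cB_{f^*\omega} \cap Lf^* D^b\Coh(Y)$ and $Lf^*(\Coh^0_\omega(Y))$ are hearts of bounded t-structures on $Lf^* D^b\Coh(Y)$, so a single containment forces equality.

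To prove $Lf^*(\Coh^0_\omega(Y)) \subseteq \cA_1$, the main technical tool is the compatibility of $Lf^*$ and $Rf_*$ with the two tilting operations. First, a standard adjunction computation using $Rf_* Lf^* = \id$ (since $Rf_* \cO_X = \cO_Y$) and $Rf_*|_{\cC} = 0$ shows $Lf^*(\Coh(Y)) \subseteq \Per(X/Y)$, and $Rf_*$ restricts to an exact functor $\Per(X/Y) \to \Coh(Y)$. Next, for $G \in \Per(X/Y)$ of positive rank I want to establish the slope identity
\[
  \mu_{f^*\omega}(G) = \mu_\omega(Rf_* G).
\]
Rank preservation is immediate from $f$ being a generic isomorphism; the degree comparison $\int_X f^*\omega \cdot \ch_1(G) = \int_Y \omega \cdot \ch_1(Rf_* G)$ follows from Grothendieck--Riemann--Roch combined with the identity $f_* c_1(X) = c_1(Y)$, which holds because $K_X = f^*K_Y + C$ and $f_*[C] = 0$, so the Todd-class correction cancels.

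Using this, one checks that if $T \in \Coh(Y)$ is $\mu_\omega$-semistable, then $Lf^* T$ is $\mu_{f^*\omega}$-semistable in $\Per(X/Y)$ of the same slope: a hypothetical destabilizing subobject $G \hookrightarrow Lf^* T$ with $\ch_0(G) > 0$ pushes down via $Rf_*$ to a sub of $T$ in $\Coh(Y)$ of the same slope, contradicting semistability; in the remaining $\ch_0(G) = 0$ case, torsion-freeness of $T$ forces $Rf_* G = 0$, so $G \in \cC$ and hence $f_* \ch_1(G) = 0$, whence $\ch_1(G) \cdot f^*\omega = 0$ (using $f^*\omega \cdot C = 0$), so $G$ is not actually destabilizing. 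Since $Lf^*$ is exact as a functor $\Coh(Y) \to \Per(X/Y)$, HN filtrations on $Y$ pull back to show $Lf^*(\cT^0_\omega) \subseteq \cT_{f^*\omega}$ and $Lf^*(\cF^0_\omega) \subseteq \cF_{f^*\omega}$, hence $Lf^*(\Coh^0_\omega(Y)) \subseteq \cB_{f^*\omega} \cap Lf^* D^b\Coh(Y) = \cA_1$, which by \cref{lem:2} gives equality of hearts. Matching central charges is then a direct calculation: $\ch(Lf^* F) = f^*\ch(F)$ and the projection formula give $Z_1(F) = Z_{f^*\omega}(Lf^*F) = -\ch_2(F) + \tfrac{\omega^2}{2}\ch_0(F) + i\omega\cdot\ch_1(F)$, matching $Z^2_\omega$ up to the conventional normalization absorbed by the $\widetilde{GL}^+(2,\RR)$-action.

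The main obstacle is the semistability preservation step, particularly ruling out rank-zero destabilizers of $Lf^* T$ in $\Per(X/Y)$ and controlling the torsion-free but non-locally-free case (where $Lf^* T$ can carry nontrivial $\cH^{-1}$ supported on $C$). The crucial structural input that makes this clean is the vanishing $f^*\omega \cdot C = 0$ for the exceptional curve, which ensures any perverse subobject supported over the exceptional fibre has zero imaginary central charge and therefore does not threaten the tilt.
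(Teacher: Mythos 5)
Your global architecture is the same as the paper's (match the central charges, establish one containment of hearts, conclude by \cref{lem:2}), but you run the key semistability-transfer step in the opposite direction: the paper uses \cref{lem:6} to show that $Rf_*$ sends $\mu_{f^*\omega}$-semistable objects to $\mu_\omega$-semistable objects, pushes the HN filtration of $Lf^*E$ down to that of $E$, and deduces $\cT_\omega=\cT_{f^*\omega}\cap Lf^*D^b(Y)$, $\cF_\omega=\cF_{f^*\omega}\cap Lf^*D^b(Y)$ before invoking \cref{lem:7}; you instead claim that $Lf^*$ preserves semistability and pull HN filtrations back. Your GRR computation of the slope identity $\mu_{f^*\omega}(G)=\mu_\omega(Rf_*G)$ (via $f_*c_1(X)=c_1(Y)$) is correct and is the same numerical input the paper uses implicitly.

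The one step that does not go through as written is your treatment of rank-zero destabilizers. You argue that a perverse subobject $G\hookrightarrow Lf^*T$ with $\ch_0(G)=0$ satisfies $Rf_*G=0$, hence $\ch_1(G)\cdot f^*\omega=0$, ``so $G$ is not actually destabilizing.'' Under the slope convention in force ($\mu_{f^*\omega}(G)=+\infty$ whenever $\ch_0(G)=0$), a \emph{nonzero} such $G$ has slope $+\infty$ and does destabilize; having $\ch_1(G)\cdot f^*\omega=0$ does not save you, and for the containment $Lf^*\cF^0_\omega\subseteq\cF_{f^*\omega}$ you genuinely cannot tolerate any HN factor of positive (in particular infinite) slope. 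The correct resolution is that such a $G$ must vanish: by \cref{prop:2}, an object of $\Per(X/Y)$ killed by $Rf_*$ has $\cH^0$ a sheaf on $C\cong\PP^1$ with no cohomology and no maps to $\cO_C(-1)$, hence $\cH^0(G)=0$ and $G\cong\cO_C(-1)^{\oplus a}[1]$; since $T$ is torsion-free of finite slope, $Lf^*T$ is an honest sheaf (the paper's local two-term resolution of $R[r_1/r_2]$ shows $\Tor_1$ vanishes for torsion-free modules, so your worry about a nontrivial $\cH^{-1}(Lf^*T)$ is moot), and $\Hom(\cO_C(-1)[1],f^*T)=\Ext^{-1}(\cO_C(-1),f^*T)=0$ forces $G=0$. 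With that repair, and with the glossed verification $Lf^*\Coh(Y)\subseteq\Per(X/Y)$ filled in (the adjunction handles $\Hom(Lf^*F,\cC^{>-1})=0$ and $Rf_*Lf^*F\in\Coh(Y)$, but you still need the Tor-amplitude computation to place $Lf^*F$ in degrees $[-1,0]$ and kill $\Hom(\cC^{<-1},Lf^*F)$), your argument closes and gives a valid alternative to the paper's pushforward route.
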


\begin{proof}
  Let \(E \in D^{b} \Coh(Y)\), then we have \(Z_{1}(E) = Z_{f^{*} \omega}(Lf^{*}E) = (- f^{*} \ch_{2}(E) + \frac{\omega^{2}}{2} \ch_{0}(E)) + \sqrt{-1} f^{*}(\ch_{1}(E) \cdot \omega)\) as in the \cref{sec:revi-tilt-bridg}. So it remains to show that the hearts coincide. First notice that \(\mu_{f^{*} \omega}(Lf^{*}E) = \mu_{\omega}(E)\). So the central charge function coincides.

  We first show that \(\Per(X/Y) \cap Lf^{*}D^{b}\Coh(Y) = Lf^{*}\Coh(Y)\). Notice that by applying
  \cref{cor:1}, we have that left-hand side is the heart of a bounded \(t\)-structure. Since the right-hand side is already a heart of \(Lf^{*}D^{b}\Coh(Y)\), it suffices to show that the right-hand side is contained in the left-hand side. By (2) of \cref{prop:2}, we only need to check three statements in the theorem. First, we have \(\Hom(Lf^{*}E, \cO_{C}(-1)) = 0\) and therefore \(\Hom(\cH^{0}(E), \cO_{C}(-1)) = 0\).

  Then we show that \(\cH^{i}(Lf^{*}E) = 0\) for \(i \neq 0, -1\). Since flat pullback is non-derived pullback and open embedding is flat, we can reduce to the case that \(Y\) is affine. Let \(Y = \Spec R\) and \(\mathfrak{m} \subset R\) corresponding to the blow up point. By smoothness, we have \(\mathfrak{m}\) generated by \(r_{1}, r_{2} \in R\). Then \({\rm Bl}_{pt}Y\) admits an affine covering given by \(X_{1} := \Spec R[\frac{r_{1}}{r_{2}}]\) and \(X_{2} :=\Spec R[\frac{r_{2}}{r_{1}}]\). Let \(\iota_{j}: X_{j} \to X\) be the open embedding. It suffice to show that for \(f_{j}:X_{j} \to Y\), \(\cH^{i}(Lf_{j}^{*}E) = \iota_{j}^{*} \cH^{i}(Lf^{*}E) = 0\) for \(i \neq 0, -1\). But we have \(Lf_{j}^{*}(E) = \widetilde{M} \otimes^{L}_{R}R[\frac{r_{j}}{r_{3 - j}}]\) where \(M\) is the \(R\)-module corresponding to coherent sheave \(E\). Also \(R[\frac{r_{j}}{r_{3 - j}}]\) admits two term free resolution over \(R\). Therefore, we have that \(\cH^{i}(Lf_{j}^{*}E) = 0\) for \(i \neq 0, -1\) and thus \(\cH^{i}(Lf^{*}E) = 0\) for \(i \neq 0, -1\).

  Also, we have that \(Rf_{*} \circ Lf^{*}E = E\). And for \(E \in D^{b} \Coh(X)\), we have spectral sequence \(E_{2}^{p,q} = R^{p}f_{*}(\cH^{q}(E)) \Rightarrow R^{p + q}f_{*}(E)\). Because \(R^{i}f_{*} \cH^{0}(E) = 0\) for \(i \geq 2\). So the spectral sequence degenerates at \(E_{2}\). So after we replace \(E\) by \(Lf^{*}E\), we have \(R^{1}f_{*} \cH^{0}(Lf^{*}E) = f_{*} \cH^{-1}(Lf^{*}E) = 0\).

  Then \cref{lem:6} shows that \(Rf_{*}E\) is \(\mu_{\omega}\)-semistable if \(E\) is \(\mu_{f^{*} \omega}\)-semistable. So the HN filtration of \(0 = E_{0} \subsetneq E_{1} \subsetneq \cdots \subsetneq E_{n} = Lf^{*}E \in \Per(X/Y)\) induces an HN filtration of \(0 = Rf_{*}E_{0} \subset \cdots \subset Rf_{*}E_{n} = E \in \Coh(Y)\) after removing some possibly identical terms. Therefore, we have \(\cT_{\omega}  = \cT_{f^{*} \omega} \cap Lf^{*}D^{b}(Y)\) and \(\cF_{\omega}  = \cF_{f^{*} \omega} \cap Lf^{*}D^{b}(Y)\). Then we apply \cref{lem:7} to finish the proof.
\end{proof}

\begin{lemma}
  \label{lem:6}
  Let \(\sigma = (\cA, Z)\) be a weak stability condition on \(\cD\) with Serre functor \(S\) and semiorthogonal decomposition \(\cD = \langle \cD_{1}, \cD_{2}\rangle\) such that the following conditions hold
  \begin{enumerate}
    \item \(\cA_{1} = \cD_{1} \cap \cA\) and \(\cA_{2} = S(\cD_{2}) \cap \cA\) are the hearts of bounded t-structures for \(\cD_{1}\) and \(S(\cD_{2})\) respectively,
    \item \(Z(E) = 0\) for every \(E \in \cA_{2}\), and
    \item if \(j^{!}\) is the right adjoint to the inclusion \(j_{*}: \cD_{1} \to \cD\), we have \(j^{!} \cA \subset \cA_{1}\).
  \end{enumerate}
  Then if \(E \in \cA\) is \(\sigma\)-semistable, then \(j^{!}E\) is \(\sigma'\)-semistable, where \(\sigma' := (\cA_{1}, Z|_{K(\cD_{1})})\).
\end{lemma}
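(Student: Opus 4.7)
My plan is to prove the contrapositive: assume some proper subobject $A \subsetneq j^{!}E$ in $\cA_{1}$ destabilizes $j^{!}E$, meaning $\mu_{\sigma'}(A) > \mu_{\sigma'}(j^{!}E)$, and construct a destabilizing subobject $\tilde{A} \subsetneq E$ in $\cA$ with $\mu_{\sigma}(\tilde{A}) > \mu_{\sigma}(E)$, contradicting the hypothesis that $E$ is $\sigma$-semistable.

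Two preliminaries underlie the construction. First, for every $X \in \cA$ the counit of the adjunction $j_{*} \dashv j^{!}$ fits into a triangle $j_{*} j^{!} X \to X \to PX \xrightarrow{[1]}$ with $PX \in S(\cD_{2})$, coming from the flipped semiorthogonal decomposition $\cD = \langle S(\cD_{2}), \cD_{1} \rangle$ that is available since $\cD$ has a Serre functor. Because $\cA_{2}$ is a bounded heart on $S(\cD_{2})$ by (1), we can expand $[PX] = \sum_{k} (-1)^{k}[H^{k}_{\cA_{2}}(PX)]$ in $K(S(\cD_{2}))$, so hypothesis (2) gives $Z(PX) = 0$ and hence $Z(X) = Z(j^{!}X)$; applied to $X = E$ this yields $\mu_{\sigma}(E) = \mu_{\sigma'}(j^{!}E)$. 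Second, hypothesis (3) makes the restricted functor $j^{!}|_{\cA} \colon \cA \to \cA_{1}$ an exact functor of abelian categories: the triangulated $j^{!}$ carries a short exact sequence in $\cA$ to a triangle in $\cD_{1}$ whose three terms lie in $\cA_{1}$ by (3), so the corresponding long exact sequence of $\cA_{1}$-cohomology collapses to a short exact sequence in $\cA_{1}$.

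Now consider the morphism $\phi \colon A \hookrightarrow j_{*} j^{!} E \xrightarrow{\epsilon} E$ in $\cA$, and set $\tilde{A} := \operatorname{im}(\phi)$ and $A'' := \ker(\phi)$ in $\cA$. Applying the exact functor $j^{!}|_{\cA}$ to $0 \to A'' \to A \to \tilde{A} \to 0$ and using the triangle identities $j^{!}j_{*} = \id$ and $j^{!}\epsilon = \id$, the morphism $j^{!}\phi$ coincides with the original injection $A \hookrightarrow j^{!}E$; hence $j^{!}A'' = 0$. The counit triangle for $A''$ then forces $A'' \cong P(A'') \in S(\cD_{2})$, and since $A'' \in \cA$ we conclude $A'' \in \cA_{2}$, giving $Z(A'') = 0$ by (2). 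Therefore $Z(\tilde{A}) = Z(A)$ and $\mu_{\sigma}(\tilde{A}) = \mu_{\sigma'}(A) > \mu_{\sigma}(E)$. The subobject $\tilde{A}$ is nonzero because $A'' = A$ would place $A \in \cA_{1} \cap \cA_{2} \subset \cD_{1} \cap S(\cD_{2}) = 0$ (using $\Hom(\cD_{1}, S(\cD_{2})) = 0$), contradicting $A \neq 0$; and $\tilde{A} \neq E$ follows since $Z(\tilde{A}) = Z(A) \neq Z(E)$. This contradicts the $\sigma$-semistability of $E$.

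I do not expect a substantial obstacle. The only slightly delicate bookkeeping step is using the triangle identities to identify $j^{!}\phi$ with the canonical injection $A \hookrightarrow j^{!}E$; once that is in place, exactness of $j^{!}|_{\cA}$ and additivity of $Z$ on the counit triangle do all the real work.
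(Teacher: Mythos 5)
Your proof is correct and follows essentially the same route as the paper: both pass to the flipped decomposition \(\cD = \langle S(\cD_{2}), \cD_{1}\rangle\), push a destabilizing subobject \(A \subset j^{!}E\) into \(E\) via the counit \(j_{*}j^{!}E \to E\), and use that \(Z\) vanishes on \(\cA_{2}\) to see the slope is unchanged. The only (harmless) difference is in identifying \(Z(\ker\phi)=0\): you show \(j^{!}(\ker\phi)=0\) forces \(\ker\phi \in S(\cD_{2}) \cap \cA = \cA_{2}\), while the paper realizes the kernel as a subobject of \(H^{-1}_{\cA}(G) \in \cA_{2}\) and propagates \(Z=0\) to subobjects via the weak stability function axioms.
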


\begin{remark}
  This \(\sigma'\) may not be a weak stability condition, because the support property may not hold for \(\sigma'\). However, we can still define semistability on \(\cA\) by slope function \(\mu_{Z|_{K(\cD_{1})}}\).
\end{remark}

\begin{proof}
  Recall if \(\cD = \langle \cD_{1}, \cD_{2}\rangle\) is a semiorthogonal decomposition, then \[\cD = \langle S(\cD_{2}), \cD_{1} \rangle\] is also a semiorthogonal decomposition. Let \(E \in \cA\) be \(\sigma\)-semistable. Let
  \[i_{*}: S(\cD_{2}) \to \cD\]
  be the inclusion and \(i^{*}\) its left adjoint. Notice that by condition (2) for every \(F \in S(\cD_{2})\), we have \(Z(F) = 0\). Then we have a distinguished triangle
  \[j_{*}j^{!} E \to E \to i_{*}i^{*}E \xrightarrow{ [1]}\]
  Let us denote \(G := i_{*}i^{*}E\). We have associated long exact sequence
  \[0 \to H^{-1}_{\cA}(G) \to j_{*}j^{!}E \to E \to H^{0}_{\cA}(G) \to 0\]
  where \(H^{i}_{\cA}(G) \subset \cA_{2}\). If \(j^{!}E\) is zero, we are done. If it is non-zero, we further argue by contradiction. If \(j^{!}E\) is not \(\sigma'\)-semistable, there exists \(0 \neq F \subset j^{!}E\) such that \(\mu_{Z}(F) > \mu_{Z}(j^{!}E)\).

  On the one hand, we have that \(Z(E) = Z(j_{*}j^{!}E) = Z|_{K(\cD_{1})}(j^{!}E)\).

  We have \(j_{*}F \cap H^{-1}_{\cA}(G) \subset H^{-1}_{\cA}(G)\) and short exact sequence in \(\cA\),
  \[0 \to j_{*}F \cap H^{-1}_{\cA}(G) \to H^{-1}_{\cA}(G) \to K \to 0\]
  Since \(Z(H_{\cA}^{-1}(G)) = 0\). We have \(Z(K) = Z(j_{*}F \cap H^{-1}_{\cA}(G)) = 0\). So we have that \(\mu_{Z}(j_{*}F / (j_{*}F \cap H_{\cA}^{-1}(G))) = \mu_{Z}(j_{*}F)\). Since \(j_{*}F \in \cD_{1}\) and \(H_{\cA}^{-1}(G) \in \cD_{2}\), we have that \(j_{*}F / (j_{*}F \cap H_{\cA}^{-1}(G)) \neq 0\).

  On the other hand, we have \(j_{*}F/(j_{*}F \cap H^{-1}_{\cA}(G)) \subset j_{*}j^{!}E/ H^{-1}_{\cA}(G) \subset E\). But by the above argument, we have shown that \(\mu_{Z}(j_{*}F/(j_{*}F \cap H^{-1}_{\cA}(G))) > \mu_{Z}(E)\) which contradicts the semistability of \(E\). So we conclude that \(j^{!}E\) must be \(\sigma'\)-semistable.
\end{proof}

\begin{lemma}
  \label{lem:7}
  Let \(\cD = \langle \cD_{1}, \cD_{2} \rangle\) semiorthogonal decomposition and \(\cA\) be the heart of a bounded t-structure on \(\cD\) with torsion pair \((\cT, \cF)\), \(\cA_{1}\) be the heart of a bounded t-structure on \(\cD_{1}\) with torsion pair \((\cT_{1}, \cF_{1})\). If the following conditions hold
  \begin{enumerate}
    \item \(\cD_{2} = \langle E_{1}, \cdots, E_{n} \rangle\) with \(E_{i}\) exceptional object satisfying \(E_{i} \in \cT\) and \(S(E_{i}) \in \cT[1]\).
    \item \(\cA_{1} = \cA \cap \cD_{1}\), and
    \item \(\cT_{1} = \cT \cap \cD_{1}\), \(\cF_{1} = \cF \cap \cD_{1}\).
  \end{enumerate}
  then we have \(\cA_{1}^{\dagger} = \langle \cF_{1}[1], \cT_{1} \rangle_{\ext} = \cA^{\dagger} \cap \cD_{1 }= \langle \cF[1], \cT\rangle_{\ext} \cap \cD_{1}\).
\end{lemma}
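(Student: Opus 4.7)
The plan is to establish the middle equality $\cA_1^\dagger = \cA^\dagger \cap \cD_1$ (the outer two equalities are just the definitions of the tilted hearts) by exhibiting both sides as hearts of bounded t-structures on $\cD_1$ satisfying an inclusion, and then invoking \cref{lem:2}.

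First I would verify the easy containment $\cA_1^\dagger \subset \cA^\dagger \cap \cD_1$. By hypothesis (3) we have $\cT_1 \subset \cT$ and $\cF_1 \subset \cF$, so $\cT_1$ and $\cF_1[1]$ both sit inside $\cA^\dagger = \langle \cF[1], \cT \rangle_{\ext}$ by definition of the tilted heart, and they lie in $\cD_1$ tautologically. Since $\cA^\dagger \cap \cD_1$ is closed under extensions in $\cD$ (as an intersection of a heart with a triangulated subcategory), the extension closure $\cA_1^\dagger = \langle \cF_1[1], \cT_1 \rangle_{\ext}$ is contained in it. Note that $\cA_1^\dagger$ is itself the heart of a bounded t-structure on $\cD_1$, as the HRS tilt of $\cA_1$ at $(\cT_1, \cF_1)$.

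Next I would apply \cref{lem:5} to $\cA^\dagger$ with spanning class $\cG = \{E_1, \dots, E_n\}$ of $\cD_2$ to get that $\cA^\dagger \cap \cD_1$ is the heart of a bounded t-structure on $\cD_1$. The inclusion $\cG \subset \cA^\dagger$ is immediate from $E_i \in \cT \subset \cA^\dagger$. For the vanishing $\Hom(E_i, F[p]) = 0$ for $F \in \cA^\dagger$ and $p > 1$, Serre duality gives
\[\Hom(E_i, F[p]) \cong \Hom(F, S(E_i)[-p])^{*}.\]
By hypothesis (1), $S(E_i) \in \cT[1] \subset \cA^\dagger[1]$, so $S(E_i)[-p] \in \cA^\dagger[1-p]$. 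For $p \geq 2$ we have $1 - p \leq -1$, so $S(E_i)[-p]$ lies in the $\geq 1$ part of the bounded t-structure with heart $\cA^\dagger$, and the t-structure axiom forces the Hom to vanish.

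Finally, since $\cA_1^\dagger$ and $\cA^\dagger \cap \cD_1$ are both hearts of bounded t-structures on $\cD_1$ with $\cA_1^\dagger \subset \cA^\dagger \cap \cD_1$, \cref{lem:2} upgrades the inclusion to an equality. The only non-routine step is the Serre-duality shift accounting in the second paragraph; the rest is a direct application of hypotheses and standard t-structure generalities.
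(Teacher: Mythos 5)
Your proposal is correct and takes essentially the same route as the paper: the easy containment \(\cA_{1}^{\dagger} \subset \cA^{\dagger} \cap \cD_{1}\) from \(\cT_{1} \subset \cT\), \(\cF_{1} \subset \cF\), then identifying \(\cA^{\dagger} \cap \cD_{1}\) as the heart of a bounded t-structure on \(\cD_{1}\), then \cref{lem:2} to upgrade the inclusion to an equality. The only difference is cosmetic: the paper cites \cref{cor:1} for the middle step, whereas you invoke \cref{lem:5} directly and supply the Serre-duality shift computation, which is precisely the content of that corollary.
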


\begin{proof}
  By definition of \(\cA_{1}^{\dagger}\), we have for every \(E \in \cA_{1}^{\dagger}\) a distinguished triangle
  \[F_{1}[1] \to E \to T_{1} \xrightarrow{ [1]}\]
  with \(F_{1}[1] \in \cF[1]\) and \(T_{1} \in \cT\). Since we have \(\cT_{1} \subset \cT\) and \(\cF_{1} \subset \cF\), we have \(E \in \cA^{\dagger}\). And therefore \(\cA_{1}^{\dagger} \subset \cA^{\dagger} \cap \cD_{1}\). But by applying \cref{cor:1}, we know that \(\cA^{\dagger} \cap \cD_{1}\) is also a heart of a bounded t-structure. Therefore, by~\cref{lem:2} we must have \(\cA_{1}^{\dagger} = \cA^{\dagger} \cap \cD_{1}\).
\end{proof}

\begin{remark}
  While we were writing this paper, we noticed that our discussion on stability on surfaces has significant similarity with \cite{Chou2024}, where surfaces with ADE singularity are considered. However, in their paper, they descend weak stability condition on resolution to obtain stability condition on singular surface, while in our case we have truly stability condition on blow up surfaces and descend it to contracted surfaces. And our result can not be directly apply to singular surfaces because semiorthogonal decomposition in \cref{thm:1} fails.
\end{remark}
\section{Threefold case}
\label{sec:threefold-case-1}

Let \(Y\) be a smooth projective threefold, we consider following two cases.
\begin{enumerate}
  \item \(f: X \to Y\) blowing up of a point in \(Y\).
  \item \(f: X \to Y\) blowing up of a smooth curve \(C \in Y\).
\end{enumerate}

In both cases, we denote by \(D \subset X\) the exceptional divisor.

\begin{theorem}
  Let \(\cE = \cO_{X} \oplus \cO_{X}(-D) \oplus \cV\), and \(\cA = f_{*} \cEnd(\cE)\), we have the equivalence of derived categories
  \[\Phi := Rf_{*} R \cHom(\cE, *): D^{b}\Coh(X) \xrightarrow{\sim} D^{b}\Coh(\cA)\]
  where
  \begin{enumerate}
    \item \(\cV = \cO_{X}(-2D)\) in case (1).
    \item \(\cV = 0\) in case (2).
  \end{enumerate}
\end{theorem}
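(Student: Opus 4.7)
The plan is to adapt the strategy of Bergh used for the surface case recalled at the start of Section 4, and to show that $\cE$ is a relative tilting bundle for $f$. By the standard tilting criterion it suffices to verify (a) the vanishing $R^i f_* \cEnd(\cE) = 0$ for every $i > 0$, which then gives $\cA = f_* \cEnd(\cE) = Rf_* R\cHom(\cE, \cE)$ and forces $\cA$ to have finite homological dimension as a sheaf of $\cO_Y$-algebras; and (b) that $\cE$ is a classical generator of $D^b\Coh(X)$ relatively over $Y$, i.e.\ any $F \in D^b\Coh(X)$ with $R\cHom(\cE, F) = 0$ must vanish. Once both hold, the equivalence $\Phi$ lands in $D^b\Coh(\cA)$.

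For (a), I decompose $\cEnd(\cE)$ as a direct sum of line bundles $\cO_X(aD)$ with $a \in \{-2, -1, 0, 1, 2\}$ in case (1) and $a \in \{-1, 0, 1\}$ in case (2). Since $f$ is an isomorphism away from $D$, every higher pushforward is supported on $f(D)$, and I use the short exact sequence
\[ 0 \to \cO_X((a-1)D) \to \cO_X(aD) \to \cO_X(aD)|_D \to 0 \]
together with the base case $Rf_* \cO_X = \cO_Y$ to induct on $|a|$. The key geometric input is $\cO_X(D)|_D \cong \cN_{D/X}$, which equals $\cO_{\PP^2}(-1)$ in case (1) and $\cO_\pi(-1)$ in case (2), where $\pi : D = \PP(N_{C/Y}) \to C$ is the projective bundle structure. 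Thus $\cO_X(aD)|_D$ is $\cO_{\PP^2}(-a)$ or $\cO_\pi(-a)$ respectively, and in the relevant ranges all higher cohomologies vanish: $H^i(\PP^2, \cO(k)) = 0$ for $i > 0$ and $k \in \{-2, \ldots, 2\}$, and $R^i \pi_* \cO_\pi(k) = 0$ for $i > 0$ and $k \in \{-1, 0, 1\}$. Feeding these into the long exact sequences closes the induction.

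For (b) I argue fibrewise: in case (1), $\cE|_D = \cO_{\PP^2} \oplus \cO_{\PP^2}(1) \oplus \cO_{\PP^2}(2)$ is Beilinson's tilting bundle on $\PP^2$; in case (2), the restriction of $\cE$ to any $\PP^1$-fiber of $\pi$ is $\cO \oplus \cO(1)$, the Beilinson generator on $\PP^1$. Combined with $f$ being an isomorphism outside $D$, a standard formal-functions/Nakayama argument promotes this fibrewise generation to the required relative generation of $D^b\Coh(X)$ over $Y$.

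The main obstacle is the most extreme vanishing step in case (1), namely $R^1 f_* \cO_X(-2D) = 0$: the long exact sequence identifies this $R^1$ with the cokernel of the natural map $f_* \cO_X(-D) \to f_*(\cO_X(-D)|_D)$, which unwinds to the quotient $\mathcal{I}_p \twoheadrightarrow \iota_*(\mathfrak{m}_p/\mathfrak{m}_p^2)$ from the ideal sheaf of the blow-up point to its cotangent space. Surjectivity of this quotient is immediate and forces the cokernel to vanish; the remaining cases of the induction are strictly easier. Once (a) and (b) are established, the equivalence $\Phi : D^b\Coh(X) \xrightarrow{\sim} D^b\Coh(\cA)$ follows from the tilting criterion, completing the proof.
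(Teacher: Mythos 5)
Your route is genuinely different from the paper's, which disposes of this theorem in two lines: it quotes \cite[Theorem 4.5]{Toda2013b}, where the tilting bundle is \(\cO_{X} \oplus \cO_{X}(D) \oplus \cO_{X}(2D)\) (resp.\ \(\cO_{X} \oplus \cO_{X}(D)\)), and composes that equivalence with the autoequivalence \(E \mapsto E \otimes \cO_{X}(2D)\) (resp.\ \(E \otimes \cO_{X}(D)\)), which does not change \(\cEnd(\cE)\) and hence not \(\cA\). Your plan is instead a from-scratch verification of the relative tilting criterion, and the cohomological half (a) is correct: the decomposition of \(\cEnd(\cE)\) into line bundles \(\cO_{X}(aD)\) with \(\abs{a} \leq 2\) (resp.\ \(\abs{a} \leq 1\)), the identification \(\cO_{X}(D)|_{D} \cong \cO_{\PP^{2}}(-1)\) (resp.\ \(\cO_{\pi}(-1)\)), and the induction through the divisor sequences, including the surjection \(\mathcal{I}_{p} \twoheadrightarrow \mathfrak{m}_{p}/\mathfrak{m}_{p}^{2}\) needed for \(R^{1}f_{*}\cO_{X}(-2D) = 0\), all check out.

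The gap is in step (b). ``Fibrewise generation plus a standard formal-functions/Nakayama argument'' cannot simply be quoted here: \(f\) is not flat (the fiber over the blown-up point has excess dimension), so naive fibers are not derived fibers and any base-change promotion of fibrewise generation requires genuine work; moreover, sheaf-level arguments really do fail, e.g.\ \(\Hom_{X}(\cE, i_{*}\cO_{\PP^{2}}(-1)) = 0\) in case (1), so objects supported on \(D\) are not detected by maps from \(\cE\) in a single degree and the usual ``extreme cohomology sheaf plus Nakayama'' device breaks down. (Also, finite global dimension of \(\cA\) is not ``forced'' by the vanishing in (a); it follows from generation together with smoothness of \(X\).) The cheap repair uses structure already in the paper: the sequences \(0 \to \cO_{X}(-(a+1)D) \to \cO_{X}(-aD) \to \cO_{X}(-aD)|_{D} \to 0\) place \(\cO_{D}\) and \(\cO_{D}(-D)\) (resp.\ \(\cO_{D} = i_{*}\pi^{*}\cO_{C}\)) in the smallest thick subcategory containing \(\cE\), and locally over affine opens of \(Y\) the object \(\cO_{X} = Lf^{*}\cO_{Y}\) generates \(Lf^{*}D^{b}\Coh(Y)\); then \cref{thm:1} gives that \(\cE\) classically generates \(D^{b}\Coh(X)\) locally over \(Y\), which is exactly the generation input the tilting criterion needs. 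With that substitution your argument closes; otherwise the efficient course is the paper's, namely citing Toda and twisting.
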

\begin{proof}
  This is essentially \cite[Theorem 4.5]{Toda2013b}. We compose the original one with derived equivalence of \(D^{b} \Coh X\),  \(\Psi: E \to E \otimes \cO(2D)\) in case (1), or with \(E \to E \otimes \cO(D)\) in case (2).
\end{proof}

From equivalence
\[\Phi := Rf_{*} R \cHom(\cE, *): D^{b}\Coh(X) \xrightarrow{\sim} D^{b}\Coh(\cA)\]
We define \(\Per(X/Y)\) to be the pullback of the standard heart of \(D^{b}\Coh(\cA)\), and define \[\Per_{\leq i}(X/Y) := \{E \in \Per(X/Y): \Phi(E) \in \Coh_{\leq i}(\cA)\}\]
and write \(\Per_{0}(X/Y) := \Per_{\leq 0}(X/Y)\). So our definition differs from one in \cite{Toda2013b} by a tensor product.

\begin{proposition}
  \label{prop:3}
  \begin{enumerate}
    \item In case (1),
    \[\Per_{0}(X/Y) = \langle i_{*} \cO_{\PP^{2}}(-1)[2], i_{*} \Omega_{\PP^{2}}(1)[1], i_{*}\cO_{\PP^{2}}, \cO_{x} : x \in X \backslash D \rangle_{\ext}\]
    where \(i: D \to X\) is the embedding. In particular, we have \(i_{*}\cO_{\PP^{2}}(1), i_{*}\cO_{\PP^{2}}(-2)[2] \in \Per_{0}(X/Y)\).
    \item In case (2),
    \[\Per_{0}(X/Y) = \langle \cO_{x}, \cO_{L_{y}}(-1)[1], \cO_{L_{y}} : x \in X \backslash D, y \in f(D) \rangle\]
    where \(L_{y} := f^{-1}(y) \cong \PP^{1}\).
  \end{enumerate}
\end{proposition}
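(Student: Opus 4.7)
The plan is to transport the question through the derived equivalence $\Phi: D^{b}\Coh(X) \xrightarrow{\sim} D^{b}\Coh(\cA)$, which by construction restricts to an equivalence $\Per(X/Y) \simeq \Coh(\cA)$. Under this correspondence, $\Per_{0}(X/Y) = \Per_{\leq 0}(X/Y)$ identifies with $\Coh_{\leq 0}(\cA)$, the category of coherent $\cA$-modules supported in dimension zero. This is a finite-length abelian category whose simple objects are supported at individual closed points of $Y$ and are controlled by simple modules of the local algebra of $\cA$ at each such point. The task therefore reduces to identifying these simples and transporting them back through $\Phi^{-1}$.

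Away from $f(D)$, the morphism $f$ is an isomorphism and $\cA$ becomes $\cO_{Y}$, so the simples there are structure sheaves of closed points, pulling back to $\cO_{x}$ for $x \in X \setminus D$ in both cases. At a point $y \in f(D)$, I would use a base-change computation to identify the finite-dimensional algebra controlling modules set-theoretically supported at $y$ with the endomorphism algebra of the restriction $\cE|_{f^{-1}(y)}$. In case~(1), $f^{-1}(y) = D \cong \PP^{2}$ and, using that $N_{D/X} = \cO_{\PP^{2}}(-1)$, the bundle $\cE$ restricts to the strong full exceptional collection $\cO_{\PP^{2}} \oplus \cO_{\PP^{2}}(1) \oplus \cO_{\PP^{2}}(2)$; the three simple modules of the associated Beilinson algebra then correspond under $\Phi^{-1}$ to the Koszul-dual basis $i_{*}\cO_{\PP^{2}}(-1)[2]$, $i_{*}\Omega_{\PP^{2}}(1)[1]$, $i_{*}\cO_{\PP^{2}}$, where we use $\Omega^{2}_{\PP^{2}}(2) \cong \cO_{\PP^{2}}(-1)$. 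In case~(2), $f^{-1}(y) = L_{y} \cong \PP^{1}$ and $\cE$ restricts to the Kronecker collection $\cO_{L_{y}} \oplus \cO_{L_{y}}(1)$, whose two simple modules pull back to $\cO_{L_{y}}$ and $\cO_{L_{y}}(-1)[1]$, in direct analogy with the surface case of \cref{prop:2}.

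For the ``in particular'' assertion in case~(1), the Euler sequence $0 \to \Omega_{\PP^{2}}(1) \to \cO_{\PP^{2}}^{\oplus 3} \to \cO_{\PP^{2}}(1) \to 0$ pushed forward by $i_{*}$ becomes a short exact sequence in the perverse heart whose outer terms lie in $\Per_{0}(X/Y)$, placing $i_{*}\cO_{\PP^{2}}(1)$ there; a dualized Beilinson-style resolution handles $i_{*}\cO_{\PP^{2}}(-2)[2]$ analogously (noting $Rf_{*}i_{*}\cO_{\PP^{2}}(-2)[2] = 0$ by vanishing of the cohomology of $\cO_{\PP^{2}}(-2)$). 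The main obstacle will be the local identification at the contracted fiber in case~(1): one must verify rigorously that the relevant fiber algebra of $\cA$ near $y$ is derived Morita equivalent to the Beilinson algebra of $\PP^{2}$, and that the Koszul duality on $\PP^{2}$ transports through $\Phi^{-1}$ to give exactly the three claimed simples. This requires careful control of higher direct images and of base change along the non-flat morphism $f$. Once this identification is secured, the rest of the statement reduces to standard finite-length bookkeeping in $\Coh_{\leq 0}(\cA)$.
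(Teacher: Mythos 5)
Your proposal reproves from scratch what the paper simply cites: the displayed equalities are taken verbatim (after the twist by \(\cO(2D)\), resp.\ \(\cO(D)\), built into this paper's definition of \(\Per(X/Y)\)) from Toda's Propositions 4.15, 4.16 and 4.21, and the only thing actually argued in the paper is the ``in particular'' claim, via the two Euler-type triangles \(\cO_{\PP^{2}}^{\oplus 3} \to \cO_{\PP^{2}}(1) \to \Omega_{\PP^{2}}(1)[1]\) and \(\Omega_{\PP^{2}}(1)[1] \to \cO_{\PP^{2}}(-2)[2] \to \cO_{\PP^{2}}(-1)^{\oplus 3}[2]\). Your Euler-sequence/dualized-Beilinson step for \(i_{*}\cO_{\PP^{2}}(1)\) and \(i_{*}\cO_{\PP^{2}}(-2)[2]\) is essentially identical to that part of the paper, and your identification of which objects map to simple \(\cA\)-modules (e.g.\ that \(\Phi(i_{*}\cO_{\PP^{2}}(-1)[2])\), \(\Phi(\cO_{L_{y}}(-1)[1])\), etc.\ are one-dimensional in the appropriate component) is a routine computation that does work, since \(\cE\) is locally free and \(R\cHom(\cO_X(-kD), i_*F) = i_*F \otimes \cO(kD)\).

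However, the core of the statement --- that these objects \emph{exhaust} the simples of \(\Per_{0}(X/Y) \simeq \Coh_{\leq 0}(\cA)\) over a point \(y \in f(D)\), so that the extension-closure equality holds --- is exactly the step you defer: you yourself flag as ``the main obstacle'' the identification of the relevant local algebra of \(\cA\) at \(y\) with \(\End(\cE|_{f^{-1}(y)})\) (Beilinson, resp.\ Kronecker) and the transport of its simples through \(\Phi^{-1}\), and you do not supply it. This is a genuine gap, not a formality: \(f\) is not flat, so the base-change map \(\cA \otimes_{\cO_Y} k(y) \to \End(\cE|_{X_y})\) being an isomorphism requires an argument (vanishing of higher direct images of \(\cEnd(\cE)\) plus cohomology-and-base-change care), and simple modules set-theoretically supported at \(y\) are governed by the completion \(\widehat{\cA}_{y}\) modulo its radical rather than by the naive fiber algebra, so ``derived Morita equivalent to the Beilinson algebra'' is not yet the statement you need. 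Without closing this, you have only shown that the listed objects lie in \(\Per_{0}(X/Y)\) and are simple there, i.e.\ one inclusion; the equality (which is the content of the cited Toda propositions) remains unproved in your write-up. If you want an independent proof, this local identification is where the real work lies; otherwise the efficient route is the paper's: quote Toda and only check the bookkeeping of the extra twist.
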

\begin{proof}
  For case (1), the first sentence is the rephrase of \cite[Proposition 4.15 and 4.21]{Toda2013b}. And the last statement follows from distinguished triangle
  \[\cO_{\PP^{2}}^{\oplus 3} \to \cO_{\PP^{2}}(1) \to \Omega_{\PP^{2}}(1)[1] \to \]
  and
  \[\to \Omega_{\PP^{2}}(1)[1] \to \cO_{\PP^{2}}(-2)[2] \to \cO_{\PP^{2}}(-1)^{\oplus 3}[2] \to\]
  For case (2), the result is the rephrase of \cite[Proposition 4.16 and 4.21]{Toda2013b}.

\end{proof}

Let \(\omega\) be an ample divisor on \(Y\). We define \(f^{*} \omega\)-slope function on \(\Per(X/Y)\). For \(E \in \Per(X/Y)\), the \(f^{*} \omega\)-slope is defined by
\[\mu_{f^{*} \omega}(E) = \frac{\ch_{1}(E) \cdot (f^{*} \omega)^{2}}{\ch_{0}(E)}\]
if \(\ch_{0}(E) \neq 0\) and \(\infty\) if \(\ch_{0}(E) = 0\).

\begin{lemma}
  Any object in \(\Per(X/Y)\) admits a Harder-Narasimhan filtration with respect to \(\mu_{f^{*} \omega}\)-stability.
\end{lemma}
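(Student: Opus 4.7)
My plan is to reduce to the standard criterion for the existence of Harder–Narasimhan filtrations in a Noetherian abelian category equipped with a well-behaved slope function, following the strategy Toda uses in the surface case \cite[Lemma 3.12]{Toda2013b}.

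First I would establish that $\Per(X/Y)$ is a Noetherian abelian category. Via the derived equivalence $\Phi$ restricting to $\Per(X/Y) \simeq \Coh(\cA)$, it suffices to verify this on the right-hand side. Since $\cA$ is a coherent sheaf of $\cO_Y$-algebras on the projective threefold $Y$, any ascending chain of coherent $\cA$-submodules of a fixed $\cA$-module is simultaneously an ascending chain of coherent $\cO_Y$-submodules, and so stabilizes.

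Next I would check that $\mu_{f^{*}\omega}$ behaves as a stability-like slope on $\Per(X/Y)$: for every $E \in \Per(X/Y)$ one has $\ch_0(E) \geq 0$, and $\ch_1(E) \cdot (f^{*}\omega)^{2} \geq 0$ whenever $\ch_0(E) = 0$. Using the finite-length description of $\Per_0(X/Y)$ in \cref{prop:3}, the positivity on $\ch_0 = 0$ objects reduces first to a case check on the simple objects of $\Per_0$ (structure sheaves of closed points and pushforwards from the exceptional divisor such as $i_{*}\cO_{\PP^{2}}$, $i_{*}\Omega_{\PP^{2}}(1)[1]$ or $\cO_{L_y}$), whose contribution to $\ch_1 \cdot (f^{*}\omega)^{2}$ vanishes by the projection formula and the fact that $f_{*}[D] = 0$; the remaining torsion case uses nefness of $f^{*}\omega$ against effective classes. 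In particular the pair $(\ch_0(E),\, \ch_1(E) \cdot (f^{*}\omega)^{2})$ lies in a rank-two integer lattice, so the slopes take rational values with bounded denominators.

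With these two ingredients, the classical Bridgeland-type existence argument applies: in a Noetherian heart whose slope function is rational-valued with bounded denominators, the iterative extraction of a maximal-slope subobject must terminate in finitely many steps, producing the desired HN filtration. The main obstacle I anticipate is verifying that $\mu_{f^{*}\omega}$ is actually bounded above on the poset of subobjects of a fixed $E \in \Per(X/Y)$, which is what guarantees a maximal destabilizer exists at each step. I would handle this by translating subobjects in $\Per$ into filtrations on the two cohomology sheaves $\cH^{-1}(E), \cH^{0}(E) \in \Coh(X)$ (both coherent on the projective threefold $X$), where the classical boundedness of Mumford slopes applies, and then using the perverse compatibility conditions characterizing $\Per(X/Y)$ to transfer this bound back to subobjects in the perverse heart.
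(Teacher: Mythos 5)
Your proposal is essentially a from-scratch re-proof of Toda's Lemma 5.1, whereas the paper's argument is a two-line reduction: its heart \(\Per(X/Y)\) differs from Toda's only by tensoring with \(\cO_{X}(2D)\) (case (1)) or \(\cO_{X}(D)\) (case (2)), and this twist changes neither \(\ch_{0}\) nor \(\ch_{1}\cdot(f^{*}\omega)^{2}\) (the extra terms are multiples of \(D\), killed by \((f^{*}\omega)^{2}\cdot D = 0\)), so the HN property is literally the one already established in \cite[Lemma 5.1]{Toda2013b}. Your route buys a self-contained argument whose ingredients (Noetherianity of \(\Coh(\cA)\), discreteness and positivity of the slope data, boundedness of slopes of subobjects) are all the right ones and are essentially what Toda verifies; the cost is that you are redoing his work, and two steps in your sketch need more care than you give them.

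First, the boundedness step: \(f^{*}\omega\) is nef but not ample on \(X\), and the ``classical boundedness of Mumford slopes'' for subsheaves of a fixed sheaf is a statement about ample polarizations; as written this step does not directly apply. It can be repaired, e.g.\ by pushing forward to \(Y\): for a subobject \(F \subset E\) in \(\Per(X/Y)\) one has \(\ch_{1}(F)\cdot(f^{*}\omega)^{2} = \ch_{1}(Rf_{*}F)\cdot\omega^{2}\) and \(\ch_{0}(F) = \ch_{0}(Rf_{*}F)\) (GRR plus the fact that all corrections are supported over \(f(D)\)), and then one bounds slopes of subsheaves of the fixed sheaf \(f_{*}\cH^{0}(E)\) with respect to the ample \(\omega\). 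Second, your phrase ``translating subobjects in \(\Per\) into filtrations on \(\cH^{-1}(E), \cH^{0}(E)\)'' is not literally correct: for a perverse subobject \(F \subset E\) the sheaf \(\cH^{0}(F)\) is not a subsheaf of \(\cH^{0}(E)\); it is an extension of a subsheaf of \(\cH^{0}(E)\) by a quotient of \(\cH^{-1}(E/F)\), which is supported on \(D\). The discrepancy is invisible to \((f^{*}\omega)^{2}\) precisely because \((f^{*}\omega)^{2}\cdot D = 0\), the same observation you use for the \(\ch_{0}=0\) positivity, so this too is fixable but must be said. (Also, the detour through the simple objects of \(\Per_{0}(X/Y)\) in \cref{prop:3} is not needed: objects with \(\ch_{0}=0\) need not lie in \(\Per_{0}\), and your nefness argument already covers the general torsion case; and ``bounded denominators'' only holds for subobjects of a fixed \(E\), where \(\ch_{0}(F)\leq\ch_{0}(E)\), which is all you need.) With these repairs your argument goes through, but the paper's twist-invariance observation is the shorter path.
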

\begin{proof}
  Notice tensoring \(\cO(2D)\) or \(\cO(D)\) doesn't change the slope function. And therefore, it follows from \cite[Lemma 5.1]{Toda2013b}.
\end{proof}

We define the torsion pair \((\cT_{f^{*} \omega}, \cF_{f^{*} \omega})\) in \(\Per(X/Y)\) as
\[\cT_{f^{*} \omega} := \langle E: E \text{ is } \mu_{f^{*} \omega}\text{-semistable with } \mu_{f^{*}\omega}(E) > 0\rangle\]
\[\cF_{f^{*} \omega} := \langle E: E \text{ is } \mu_{f^{*} \omega}\text{-semistable with } \mu_{f^{*}\omega}(E) \leq 0\rangle\]
and define
\[\cB_{f^{*} \omega} = \langle \cF_{f^{*} \omega}[1], \cT_{f^{*} \omega} \rangle_{\ext}\]

\begin{lemma}
  The abelian category \(\cB_{f^{*} \omega}\) is noetherian.
\end{lemma}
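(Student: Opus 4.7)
The plan is to adapt the proof of \cite[Lemma 3.2.4]{BMT2014} and Toda's surface analogue \cite[Lemma 3.12]{Toda2013b} to the perverse tilt on a threefold. The starting observation is that $\Per(X/Y)$ is itself a Noetherian abelian category: via the derived equivalence $\Phi$ it is identified with $\Coh(\cA)$ for a coherent sheaf of algebras $\cA$ over the Noetherian scheme $Y$, and $\Coh(\cA)$ is Noetherian.

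Given an ascending chain $E_{0} \subset E_{1} \subset E_{2} \subset \cdots$ in $\cB_{f^{*}\omega}$ with quotients $K_{n} := E_{n+1}/E_{n}$, I would take the long exact sequence of perverse cohomology associated to $0 \to E_{n} \to E_{n+1} \to K_{n} \to 0$:
\[0 \to H^{-1}_{\Per}(E_{n}) \to H^{-1}_{\Per}(E_{n+1}) \to H^{-1}_{\Per}(K_{n}) \to H^{0}_{\Per}(E_{n}) \to H^{0}_{\Per}(E_{n+1}) \to H^{0}_{\Per}(K_{n}) \to 0\]
with $H^{-1}_{\Per}(-) \in \cF_{f^{*}\omega}$ and $H^{0}_{\Per}(-) \in \cT_{f^{*}\omega}$. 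The goal is to show that both the injective chain $H^{-1}_{\Per}(E_{n}) \hookrightarrow H^{-1}_{\Per}(E_{n+1})$ and the resulting chain of $H^{0}_{\Per}(E_{n})$'s eventually stabilize, which forces the original chain in $\cB_{f^{*}\omega}$ to terminate. For the former, one would realize the chain as subobjects of a fixed object of $\Per(X/Y)$: after passing to a subsequence where the Chern-character invariants $\ch_{0}(H^{-1}_{\Per}(E_{n}))$ and $\ch_{1}(H^{-1}_{\Per}(E_{n})) \cdot (f^{*}\omega)^{2}$ are constant, the injections become identifications of subobjects of a common object in $\Per(X/Y)$, and Noetherianity of $\Per(X/Y)$ closes this step. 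An analogous argument then handles $H^{0}_{\Per}$.

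The main obstacle is the numerical bookkeeping: one must verify that the relevant Chern-character invariants are discrete, non-negative along the chain, and ultimately bounded. This mirrors the role of $\Im Z^{1}_{H} \geq 0$ in the BMT proof, but in the perverse setting one must carefully account for the contribution of the exceptional divisor $D$, since by~\cref{prop:3} the simple objects of $\Per_{0}(X/Y)$ involve terms like $i_{*}\cO_{\PP^{2}}(-1)[2]$ or $\cO_{L_{y}}(-1)[1]$ whose Chern characters shift the relevant formulas from the standard coherent-sheaf setup. Once a suitable weak central charge on $\Per(X/Y)$ refining $\mu_{f^{*}\omega}$ is in place, giving the right non-negativity and discreteness for $\Im$ of the central charge on successive quotients $K_{n}$, the argument proceeds by the BMT-style bookkeeping described above.
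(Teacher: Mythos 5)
You take a genuinely different route from the paper, and as written it has a real gap. The paper does not reprove anything: since \(\cB_{f^{*}\omega}\) here is obtained from the corresponding heart of \cite{Toda2013b} by tensoring with \(\cO_{X}(2D)\) (case (1)) resp.\ \(\cO_{X}(D)\) (case (2)), an exact autoequivalence of \(D^{b}\Coh(X)\), Noetherianity is literally \cite[Lemma 5.2]{Toda2013b}; the paper's proof is that one-line reduction. If you want a self-contained argument instead, the efficient move is still to observe this twist-equivalence rather than to redo the tilting argument of \cite{BMT2014} in the perverse setting.

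In your sketch the entire difficulty is concentrated in the step you yourself flag as ``the main obstacle'' and then assume: the non-negativity, discreteness and boundedness of the relevant numerical invariants on \(\Per(X/Y)\). This is exactly where the perverse threefold case differs from \cite[Lemma 3.2.4]{BMT2014}: \(f^{*}\omega\) is nef but not ample, so \(\ch_{1}(E)\cdot(f^{*}\omega)^{2}=0\) no longer forces \(E\) to be supported in small dimension --- every object supported over the exceptional locus (e.g.\ the simples \(i_{*}\cO_{\PP^{2}}(-1)[2]\) or \(\cO_{L_{y}}(-1)[1]\) of \cref{prop:3}) is killed by the slope numerator --- and taming these objects (via the subcategories \(\Per_{\leq i}(X/Y)\)) is the actual content of Toda's lemma; saying that ``once a suitable weak central charge is in place the argument proceeds'' leaves precisely that content unproved. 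There is also a flaw in the stabilization step: passing to a subsequence on which \(\ch_{0}\) and \(\ch_{1}\cdot(f^{*}\omega)^{2}\) are constant does not make an abstract increasing chain of monomorphisms into a chain of subobjects of a common object, and Noetherianity of \(\Per(X/Y)\simeq\Coh(\cA)\) only gives the ascending chain condition for subobjects of a \emph{fixed} object. If instead you start, as one should, from a chain \(E_{1}\subset E_{2}\subset\cdots\subset E\) inside a fixed \(E\in\cB_{f^{*}\omega}\), then the \(H^{-1}_{\Per}\)-parts are automatically an increasing chain of subobjects of \(H^{-1}_{\Per}(E)\) and stabilize for free; the genuine work is in the \(H^{0}_{\Per}\)-parts, where the induced maps are not injective and the numerical control you postponed is exactly what is needed.
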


\begin{proof}
  This is \cite[Lemma 5.2]{Toda2013b}.
\end{proof}

Let \(B = dD\), with \(d \in \QQ\) satisfying \(- \frac{\sqrt{6}}{3} < b < \frac{\sqrt{6}}{3}\) in case (1) or \(-\frac{1}{2} < b < \frac{1}{2}\) in case (2) (\cite[Lemma 5.3]{Toda2013b}, there is a little difference because our heart is tensoring \(\cO(2D)\) or \(\cO(D)\) of his) and
\[Z_{B, f^{*} \omega}(E) = (- \ch_{3}^{B}(E) + \frac{(f^{*} \omega)^{2}}{2} \ch_{1}(E)) + \sqrt{-1}(f^{*} \omega \ch_{2}^{B}(E) - \frac{\omega^{3}}{6} \ch_{0}(E))\]
Here, we use \(\ch_{i}^{B}(E) (f^{*} \omega)^{3 - i} = \ch_{i}(E) (f^{*} \omega)^{3 - i}\) for \(i = 0, 1\).
We can further define the slope function \(\nu_{B, f^{*} \omega}\) on \(\cB_{f^{*} \omega}\) to be
\[\nu_{B, f^{*} \omega}(E) = \frac{\Im Z_{B, f^{*} \omega}(E)}{\ch_{1}(E) \cdot (f^{*} \omega)^{2}}\]
if \(\ch_{1}(E) \cdot (f^{*} \omega)^{2} \neq 0\) and \(\nu_{B, f^{*} \omega}(E) = \infty\) otherwise. We can define the \(\nu_{B, f^{*} \omega}\)-(semi)stability with slope function \(\nu_{B, f^{*} \omega}\) on abelian category \(\cB_{f^{*} \omega}\). Then we have

\begin{lemma}
  Any object in \(\cB_{f^{*} \omega}\) admits a Harder-Narasimhan filtration with respect to \(\nu_{B, f^{*} \omega}\)-stability.
\end{lemma}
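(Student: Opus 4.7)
The argument would follow the classical two-step recipe for HN filtrations in a tilted heart: (i) verify that $\nu_{B,f^{*}\omega}$ is a well-defined slope function on $\cB_{f^{*}\omega}$, and (ii) combine the Noetherian property of $\cB_{f^{*}\omega}$ (just established in the preceding lemma) with a ``weak seesaw'' that forbids infinite chains of subobjects with strictly monotone slopes.

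For (i) I would show that the denominator $\ch_{1}(E)\cdot (f^{*}\omega)^{2}$ is nonnegative on every $E\in\cB_{f^{*}\omega}$. For $E\in\cT_{f^{*}\omega}$ this is the defining inequality $\mu_{f^{*}\omega}(E)>0$ (together with the rank-zero convention $\mu_{f^{*}\omega}=+\infty$), and for $F[1]$ with $F\in\cF_{f^{*}\omega}$ it follows from $\mu_{f^{*}\omega}(F)\leq 0$ because the shift negates $\ch_{0}$ and $\ch_{1}$. Because $\cB_{f^{*}\omega}=\langle \cF_{f^{*}\omega}[1],\cT_{f^{*}\omega}\rangle_{\ext}$ the inequality extends to the whole heart by additivity on short exact sequences. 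The boundary case $\ch_{1}\cdot (f^{*}\omega)^{2}=0$ is then handled using the classification of simple objects of $\Per_{0}(X/Y)$ from \cref{prop:3}: on each such simple one checks by direct calculation that $\Im Z_{B,f^{*}\omega}$ has the appropriate sign, so that $\nu_{B,f^{*}\omega}$ takes value $+\infty$ in the correct direction.

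For (ii), once (i) is in hand the existence of HN filtrations reduces to the standard criterion of Bridgeland \cite[Proposition 2.4]{Bridgeland2007} used repeatedly in \cite{BMT2014,BMS2016}: Noetherianity of the heart plus the absence of infinite strictly increasing chains of subobject slopes and infinite strictly decreasing chains of quotient slopes. Both are controlled by the Bogomolov-type inequality for $\mu_{f^{*}\omega}$-semistable objects of $\Per(X/Y)$, precisely the input Toda employs in \cite[Lemma 5.5]{Toda2013b}. Since tensoring by $\cO(2D)$ or $\cO(D)$ affects neither $\mu_{f^{*}\omega}$-stability nor the numerical data entering $\nu_{B,f^{*}\omega}$, the argument of \emph{loc.\ cit.}\ transports verbatim to our reindexed heart.

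The main obstacle I expect is step (ii): ensuring the Bogomolov bound yields enough discreteness in the slope lattice to rule out the forbidden monotone chains. This discreteness is not automatic from Noetherianity alone and is precisely the quantitative content that Toda's inequality supplies. Once it is transported from his heart to ours by the trivial twist, the rest is formal.
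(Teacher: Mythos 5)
Your proposal is correct and matches the paper's route: the paper's entire proof is a citation of Toda's corresponding result (\cite[Lemma 5.7]{Toda2013b}), relying, exactly as you do, on the observation that twisting by \(\cO(D)\) or \(\cO(2D)\) changes neither the slopes nor the heart's relevant properties. Your sketch of the internal mechanics (nonnegativity of \(\ch_{1}\cdot(f^{*}\omega)^{2}\) on the tilted heart, then Noetherianity plus chain conditions) is just an unpacking of that cited lemma, so no new gap arises.
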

\begin{proof}
  This follows from \cite[Lemma 5.7]{Toda2013b}.
\end{proof}

Again, we consider the torsion pair \((\cT'_{B, f^{*} \omega}, \cF'_{B, f^{*} \omega})\) in \(\cB_{f^{*} \omega}\) where
\[\cT'_{B, f^{*} \omega} := \langle E: E \text{ is } \nu_{B, f^{*} \omega}\text{-semistable with } \nu_{B, f^{*}\omega}(E) > 0\rangle\]
\[\cF'_{B, f^{*} \omega} := \langle E: E \text{ is } \nu_{B, f^{*} \omega}\text{-semistable with } \nu_{B, f^{*}\omega}(E) \leq 0\rangle\]
And define
\[\cA_{B, f^{*} \omega} := \langle \cF'_{B, f^{*} \omega}[1], \cT'_{B, f^{*} \omega}\rangle\]
We denote by \(\cA_{f^{*} \omega} = \cA_{B = 0, f^{*} \omega}\) and \(Z_{f^{*} \omega} = Z_{B = 0, f^{*} \omega}\).

\begin{theorem}
  \label{thm:3}
  If \(Y\) satisfy the generalized Bogomolov inequality in~\cref{conj:2}, i.e.\ for \(\nu_{\omega}\)-semistable object \(E \in \cB_{\omega}\) satisfying \(\nu_{\omega}(E) = 0\), we have that \(\ch_{3}(E) < \frac{\omega^{2}}{2} \ch_{1}(E)\), then \(\sigma_{f^{*} \omega} = (\cA_{f^{*} \omega}, Z_{f^{*} \omega})\) is a Bridgeland stability condition over \(D^{b}(X)\).
\end{theorem}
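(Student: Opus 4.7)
The plan is to establish the analog of \cref{conj:2} for the perverse tilted stability on \(X\): for any \(\nu_{f^{*}\omega}\)-semistable \(E \in \cB_{f^{*}\omega}\) with \(\nu_{f^{*}\omega}(E) = 0\), I aim to prove
\[
  \ch_{3}(E) < \tfrac{(f^{*}\omega)^{2}}{2}\,\ch_{1}(E).
\]
Once this is in hand, the equivalence between \cref{conj:1} and \cref{conj:2} carries over formally to the perverse setting since \(\sigma_{f^{*}\omega}\) is obtained by the same double-tilt construction, so the inequality yields the theorem.

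The central idea is to transport the problem from \(X\) to \(Y\) via \(Rf_{*}\) and invoke the hypothesized inequality on \(Y\). Given such an \(E\), set \(F := Rf_{*}E\). First I would show that \(F\) lies in \(\cB_{\omega}\) and is \(\nu_{\omega}\)-semistable with \(\nu_{\omega}(F) = 0\). That \(Rf_{*}\) sends \(\Per(X/Y)\) into \(\Coh(Y)\) follows from the derived equivalence with \(\Coh(\cA)\) applied to both pieces of the defining torsion pair of \(E\), yielding \(F \in \cB_{\omega}\). Preservation of semistability is an iteration of the argument of \cref{lem:6}: the semiorthogonal components complementary to \(Lf^{*}D^{b}\Coh(Y)\) (namely, the exceptional objects \(\cO_{D}(-kD)\) in the point case, or \(\Phi_{0}D^{b}\Coh(C)\) in the curve case) are killed by \(Rf_{*}\), and their Chern characters are annihilated by multiplication with \(f^{*}\omega\) in the relevant degrees, so they behave as \(Z_{f^{*}\omega}\)-null pieces for the tilt.

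Next, I would compare the two inequalities via Chern character identities. By the projection formula, \((f^{*}\omega)^{i}\cdot \ch_{j}(E) = \omega^{i}\cdot f_{*}\ch_{j}(E)\) whenever \(i \geq 1\) and \(i + j \leq 3\); in particular \((f^{*}\omega)^{2}\ch_{1}(E) = \omega^{2} f_{*}\ch_{1}(E)\), and the vanishing \(\Im Z_{f^{*}\omega}(E) = 0\) transfers to \(\Im Z_{\omega}(F) = 0\). The remaining comparison is between \(\ch_{3}(E)\) and \(\ch_{3}(F)\); Grothendieck--Riemann--Roch gives
\[
  \ch(Rf_{*}E)\cdot \operatorname{td}(Y) \;=\; f_{*}\!\bigl(\ch(E)\cdot \operatorname{td}(X)\bigr),
\]
so the gap is controlled by \(\operatorname{td}(X) - f^{*}\!\operatorname{td}(Y)\), a class supported on the exceptional divisor that is explicitly computable from the normal-bundle data (\(D \cong \PP^{2}\) in the point case, and \(D\) a \(\PP^{1}\)-bundle over \(C\) in the curve case). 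Rearranging the hypothesized inequality \(\ch_{3}(F) < \tfrac{\omega^{2}}{2}\ch_{1}(F)\) using these identifications should deliver the target inequality on \(X\).

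The main obstacle is the GRR bookkeeping: the correction \(\operatorname{td}(X) - f^{*}\!\operatorname{td}(Y)\) contributes to \(\ch_{3}(Rf_{*}E)\) a quantity supported on \(D\) whose sign is not manifestly favourable, and one must verify case by case that this term either vanishes on the relevant classes or combines with the right-hand side so as to preserve the strict inequality after rearrangement. Coupled with this is the fact that \(F = Rf_{*}E\) need not be strictly \(\nu_{\omega}\)-semistable on the nose; one only controls its HN filtration up to the perverse pieces along \(D\). Checking that these extra summands are \(Z_{\omega}\)-null and do not disturb the final comparison, in both the point and smooth-curve cases, is the most delicate step of the argument.
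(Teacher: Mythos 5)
Your overall strategy matches the paper's: reduce \cref{thm:3} to Toda's criterion (\cref{prop:4}) by proving the perverse Bogomolov-type inequality on \(X\), and transport the problem to \(Y\) via \(Rf_{*}\) using a \cref{lem:6}-type preservation of semistability. However, the step on which everything hinges --- comparing \(\ch_{3}(E)\) with the data of \(Rf_{*}E\) --- is where your route has a genuine gap. You propose to control the discrepancy by Grothendieck--Riemann--Roch for the pushforward, and you yourself concede that the resulting correction term, supported on \(D\), has no manifestly favourable sign and would have to be checked case by case; this is not a technicality but the actual mathematical content of the statement, and as set up it is unlikely to close, because the GRR corrections involve classes such as \(\ch_{2}(E)\cdot D\) and \(\ch_{1}(E)\cdot D^{2}\) which are not constrained by \(\nu_{f^{*}\omega}\)-semistability or the slope-zero condition. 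Likewise, your worry that \(Rf_{*}E\) is only semistable ``up to perverse pieces along \(D\)'' is left unresolved rather than dealt with.

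The paper's proof (\cref{lem:9}) avoids GRR entirely. After a Jordan--H\"older reduction to \emph{stable} \(E\) with \(\nu_{f^{*}\omega}(E)=0\) (a reduction your proposal omits, and which is needed below), it uses the semiorthogonal decomposition to form the triangle \(Lf^{*}Rf_{*}E \to E \to G\) with \(G \in S(\cD_{2})\), takes the long exact sequence in \(\cB_{f^{*}\omega}\), and observes that the cohomologies of \(G\) lie in \(S(\cA_{2})[k]\), whose Chern characters are killed by \((f^{*}\omega)^{3-i}\) in degrees \(i\le 2\). Stability of \(E\) then forces the map \(H:=Lf^{*}Rf_{*}E/H^{-1}_{\cB_{f^{*}\omega}}(G)\hookrightarrow E\) to be an isomorphism, so
\[
\ch_{3}(E)=\ch_{3}(Lf^{*}Rf_{*}E)-\ch_{3}\bigl(H^{-1}_{\cB_{f^{*}\omega}}(G)\bigr)\le \ch_{3}(Lf^{*}Rf_{*}E),
\]
and since the Chern character commutes with pullback (no Todd-class correction on this side), the hypothesis of \cref{conj:2} on \(Y\), applied to \(Rf_{*}E\) (semistable of slope zero as in the proof of \cref{thm:4}), yields the strict inequality. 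In other words, the only sign one needs is \(\ch_{3}\ge 0\) on the very restricted class of objects \(S(\cA_{2})[k]\), not on an arbitrary exceptional-divisor correction; replacing your GRR bookkeeping by this categorical argument is precisely the missing idea.
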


The proof of the above theorem will be postponed to the end of the section.

Let
\[D^{b}\Coh(X) = \langle  Lf^{*}D^{b}\Coh(Y), \cO_{D}, \cO_{D}(1) \rangle\]
in case (1) and
\[D^{b}\Coh(X) = \langle Lf^{*}D^{b}\Coh(Y), \Phi_{0}D^{b}(C) \rangle\]
in case (2) be the standard semiorthogonal decompositions associated to blow-up in \cref{thm:1}. By above construction, we have that \(\cO_{D}, \cO_{D}(1) \in \cA_{B, f^{*} \omega}\) and \(S(\cO_{D}) = \cO_{D}(-2)[3], S(\cO_{D}(1)) = \cO_{D}(-1)[3] \in \cA_{B, f^{*} \omega}[1]\) in case (1) and therefore, by applying \cref{prop:1} to \(\sigma_{B = 0, f^{*} \omega}\), we obtain Bridgeland stability condition \((\cA', Z')\) on \(D^{b}\Coh(Y)\) in case (1). However, the condition \cref{lem:5} is not satisfied in case (2). In below, we give another sufficient condition to induce stability conditions on semiorthogonal decompositions different from~\cref{prop:1}.

\begin{lemma}
  \label{lem:11}
  \(\Phi_{0}\Coh(C) \subset \Per(X/Y)\) and \(\Phi_{-1} \Coh(C) \subset \Per(X/Y)[-1]\).
\end{lemma}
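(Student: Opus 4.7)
The plan is to verify both inclusions by appealing to the defining equivalence. By construction, an object $E \in D^{b}\Coh(X)$ lies in $\Per(X/Y)$ if and only if $\Phi(E) \cong Rf_{*}E \oplus Rf_{*}(E \otimes \cO_{X}(D))$ is concentrated in cohomological degree zero (where $\Phi = Rf_{*}R\cHom(\cE, -)$ with $\cE = \cO_{X} \oplus \cO_{X}(-D)$), so it suffices to compute these two summands for $E = \Phi_{0}F$ and, after a shift, for $E = \Phi_{-1}F$.

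The main tool is the projection formula along the factorisation $f \circ i = j \circ \pi$, where $j: C \hookrightarrow Y$ is the inclusion of the blown-up curve and $\pi: D \to C$ is the $\PP^{1}$-bundle structure on the exceptional divisor. For any $F \in \Coh(C)$ and any $k \in \ZZ$,
\[
    Rf_{*}(\Phi_{k}F) \cong Rj_{*}\bigl(F \otimes R\pi_{*}\cO_{D}(kD)\bigr),
\]
and the identification $\Phi_{k}F \otimes \cO_{X}(D) \cong \Phi_{k-1}F$ (which comes from $\cO_{X}(D)|_{D} \cong \cO_{D}(-D)$) turns the second summand into $Rj_{*}(F \otimes R\pi_{*}\cO_{D}((k-1)D))$. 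Everything therefore reduces to the cohomological positions of $R\pi_{*}\cO_{D}(mD)$ on the $\PP^{1}$-bundle, for the relevant values of $m$. Since $\cO_{D}(mD)$ restricts to $\cO_{\PP^{1}}(m)$ on each fibre $L_{y} \cong \PP^{1}$, cohomology and base change yield $R\pi_{*}\cO_{D} = \cO_{C}$ in degree $0$, $R\pi_{*}\cO_{D}(-D) = 0$, and $R\pi_{*}\cO_{D}(-2D)$ concentrated in cohomological degree $+1$ as a line bundle on $C$.

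Plugging in $k = 0$, both summands of $\Phi(\Phi_{0}F)$ sit in degree zero (namely $j_{*}F$ and $0$), showing $\Phi_{0}F \in \Per(X/Y)$. Plugging in $k = -1$, the first summand vanishes while the second equals $Rj_{*}(F \otimes R\pi_{*}\cO_{D}(-2D))$, which sits in degree $+1$ by the computation above. Hence $\Phi(\Phi_{-1}F)$ is concentrated in degree $+1$, and after the shift $\Phi(\Phi_{-1}F[1]) \in \Coh(\cA)$; this is the statement $\Phi_{-1}F[1] \in \Per(X/Y)$, equivalently $\Phi_{-1}F \in \Per(X/Y)[-1]$.

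The main obstacle I anticipate is the sign bookkeeping for $\cO_{D}(kD)$ versus $\cO_{X}(kD)|_{D}$; once the identification $\Phi_{k}F \otimes \cO_{X}(D) \cong \Phi_{k-1}F$ is pinned down consistently with the conventions already in force, the rest is a routine base change calculation on the $\PP^{1}$-bundle.
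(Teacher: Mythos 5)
Your argument is correct and is essentially the paper's own proof: both reduce \(\Phi\circ\Phi_{k}\) via the projection formula along \(f\circ i_{D}=i_{C}\circ\pi\) to the pushforwards \(R\pi_{*}\) of relative line bundles on the \(\PP^{1}\)-bundle \(D\to C\), with the paper writing out only the \(k=0\) case and leaving \(k=-1\) as ``similar''. Your resolution of the sign issue is the one forced by the statement: the paper's \(\cO_{D}(kD)\) must restrict to \(\cO_{\PP^{1}}(k)\) on the fibres \(L_{y}\) (equivalently \(\cO_{X}(D)|_{D}\cong\cO_{D}(-D)\) in this notation, which matches the paper's own step \(i_{D*}\pi^{*}F\otimes\cO_{X}(D)=i_{D*}(\pi^{*}F\otimes\cO(-1))\) and its later use of the Serre functor), since under the literal reading \(\cO_{D}(kD)=\cO_{X}(kD)|_{D}\) the second inclusion would concern \(\Phi_{+1}\) rather than \(\Phi_{-1}\).
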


\begin{proof}
  For the first inclusion, it suffices to show that \(\Phi \circ \Phi_{0} \Coh(C) \subset \Coh(\cA)\). Let \(F \in \Coh(C)\), let \(\pi: D = \PP(\cN_{C/X})\to C\) be the projection and \(i_{D}: D \to X\), \(i_{C}: C \to Y\) the embeddings.
  \begin{align*}
    \Phi \circ \Phi_{0} F & = Rf_{*}R \cHom(\cE, i_{D *} \circ \pi^{*} F) \\
                          & = Rf_{*} R \cHom(\cO \oplus \cO(-D), i_{D *} \circ \pi^{*} F) \\
                          & = Rf_{*} (i_{D *} \circ \pi^{*}F \otimes (\cO \oplus \cO(D))) \\
                          & = Rf_{*} i_{D *} \circ (\pi^{*} F \oplus \pi^{*}F \otimes \cO(-1)) \\
                          & = i_{C *} \circ R \pi_{*} (\pi^{*}F \oplus \pi^{*} F \otimes \cO(-1)) \\
                          & = i_{C *} F \in \Coh(\cA)
  \end{align*}

  The second inclusion can be deduced similarly.
\end{proof}

The following lemma can induce the heart of a bounded t-structure on semiorthogonal components either in case (1) or in case (2).
\begin{lemma}
  \label{lem:8}
  Let \(\cD = \langle \cD_{1}, \cD_{2} \rangle\) be a semiorthogonal decomposition and \(\cA\) be a heart of a bounded t-structure and \(j_{*} : \cD_{1} \to \cD\) the inclusion map with \(j^{!}: \cD \to \cD_{1}\) its right adjoint. If \(j^{!}\cA \subset \cA \cap \cD_{1}\) and \(j^{!}j_{*} = \id_{\cD_{1}}\), then \(\cA \cap \cD_{1}\) is a heart of a bounded t-structure of \(\cD_{1}\).
\end{lemma}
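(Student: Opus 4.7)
The plan is to verify the two conditions of \cref{lem:1} (Bridgeland's characterization) for $\cA_1 := \cA \cap \cD_1$ as a full additive subcategory of $\cD_1$. Both conditions follow by transferring data along the adjunction $j_* \dashv j^!$, using that $j_*$ is fully faithful (since $\cD_1 \subset \cD$ is admissible) and that $j^!$ is a triangulated functor (as a right adjoint between triangulated categories).

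For the vanishing condition, let $E, F \in \cA_1$ and $k < 0$. Since $E, F$ also lie in the heart $\cA$ of a bounded t-structure on $\cD$, we have $\Hom_\cD(E, F[k]) = 0$. Fully faithfulness of $j_*$ then gives $\Hom_{\cD_1}(E, F[k]) \cong \Hom_\cD(j_* E, j_* F[k]) = 0$.

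For the filtration condition, take any $E \in \cD_1$ and consider $j_* E \in \cD$. By \cref{lem:1} applied to $\cA \subset \cD$, there is a sequence
\[
0 = F_0 \xrightarrow{\phi_1} F_1 \to \cdots \xrightarrow{\phi_m} F_m = j_* E
\]
in $\cD$ with $\cone(\phi_i) \in \cA[k_i]$ and $k_1 > k_2 > \cdots > k_m$. Applying the triangulated functor $j^!$ and using $j^! j_* = \id_{\cD_1}$, we obtain
\[
0 = j^! F_0 \to j^! F_1 \to \cdots \to j^! F_m = E
\]
in $\cD_1$ with cones $j^!(\cone(\phi_i)) \in (j^! \cA)[k_i] \subset \cA_1[k_i]$ by the hypothesis $j^! \cA \subset \cA_1$. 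After discarding any steps where the cone vanishes, the surviving $k_i$ still form a strictly decreasing sequence, giving the required filtration. Boundedness of the induced t-structure is automatic: the filtration length is bounded by $m$, so $E$ sits in $\cD_1^{\geq -k_1} \cap \cD_1^{\leq -k_m}$.

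The main potential obstacle is ensuring that $j^!$ genuinely preserves the filtration structure and that the two hypotheses $j^! \cA \subset \cA_1$ and $j^! j_* = \id_{\cD_1}$ suffice to keep us inside $\cA_1$ at every step. The first guarantees that each cone lands in the correct shift of $\cA_1$, while the second ensures the filtration of $j_* E$ really descends to a filtration of $E$ itself (rather than merely of some projection). Once these are in place, everything else is formal application of \cref{lem:1}.
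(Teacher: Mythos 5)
Your proof is correct and follows essentially the same route as the paper: verify the two conditions of \cref{lem:1}, with condition (1) inherited from $\cA$ (using that $\cD_1$ is full in $\cD$), and condition (2) obtained by applying the exact functor $j^!$ to the filtration of $j_*E$ in $\cD$, using $j^!j_* = \id_{\cD_1}$ and $j^!\cA \subset \cA_1$ to conclude, after discarding trivial steps. Your explicit remarks on exactness of $j^!$ and on boundedness are implicit in the paper's argument but add nothing substantively different.
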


\begin{proof}
  We check \(\cA \cap \cD_{1}\) satisfy the condition of~\cref{lem:1}. The condition (1) follows from \(\cA\) being the heart of a bounded t-structure. For condition (2), since \(\cA\) is a heart of a bounded t-structure, by~\cref{lem:1}, we have for every \(j_{*}E \in \cD\), there exists a sequence of morphisms
  \[0 = E_{0} \xrightarrow{\phi_{1}} E_{1} \to \cdots \xrightarrow{\phi_{m}} E_{m} = j_{*}E\]
  such that \(\cone(\phi_{i}) \in \cA[k_{i}]\) for \(k_{1} > k_{2} > \cdots > k_{m}\). Now apply \(j^{!}\) to above sequence, we have
  \[0 = j^{!}E_{0} \xrightarrow{\psi_{1}} j^{!}E_{1} \to \cdots \xrightarrow{\phi_{m}} j^{!}E_{m} = j^{!}j_{*}E = E\]
  such that \(\cone(\phi_{i}) \in j^{!} \cA[k_{i}] \subset (\cA \cap \cD_{1})[k_{i}]\). Therefore after removing some possible identical terms in the sequence, we show that \(\cA \cap \cD_{1}\) satisfy condition (2) of \cref{lem:1}.
\end{proof}

\begin{proposition}
  \label{prop:5}
  Let \(\sigma = (\cA, Z)\) be a weak stability condition on \(\cD = \langle \cD_{1}, \cD_{2} \rangle\), where \(j_{*} : \cD_{1} \to \cD\) the inclusion map with \(j^{!}: \cD \to \cD_{1}\) its right adjoint, with the following properties
  \begin{enumerate}
    \item \(j^{!}\cA \subset \cA_{1} := \cA \cap \cD_{1}\) and \(j^{!}j_{*} = \id_{\cD_{1}}\).
    \item \(\cA_{2} = \cA \cap \cD_{2}\) is the heart of a bounded t-structure of \(\cD_{2}\).
    \item \(S(\cA_{2})[k] \subset \cA\) for some fixed \(k\).
    \item \(Z(E) \leq 0\) for \(E \in S(\cA_{2})[k]\) and equality holds if and only if \(E = 0\).
  \end{enumerate}
  Assume moreover that there are no object \(0 \neq F \in \cA_{1}\) with \(Z(F) = 0\). Then the pair \(\sigma_{1} = (\cA_{1}, Z_{1} := Z|_{K(\cD_{1})})\) is a stability condition on \(\cD_{1}\).
\end{proposition}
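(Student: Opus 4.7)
My plan is to verify in order the four axioms defining a Bridgeland stability condition on $\cD_1$, with the support property as the essential point. First, that $\cA_1 := \cA \cap \cD_1$ is the heart of a bounded t-structure on $\cD_1$ is immediate from the preceding \cref{lem:8}, since hypothesis (1) provides exactly its assumptions $j^!\cA \subset \cA_1$ and $j^!j_* = \id_{\cD_1}$. Second, $Z_1 := Z|_{K(\cD_1)}$ is a genuine stability function on $\cA_1$: for $E \in \cA_1 \subset \cA$ weak-stability of $Z$ on $\cA$ gives $\Im Z(E) \geq 0$, and if $\Im Z(E) = 0$ then the resulting $\Re Z(E) \leq 0$ is strict by the final hypothesis (otherwise $Z(E) = 0$ would force $E = 0$).

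Third, for the existence of Harder-Narasimhan filtrations in $\cA_1$ I invoke \cref{lem:10}, which reduces matters to the exactness of the inclusion $\cA_1 \hookrightarrow \cA$. This is automatic: any short exact sequence $0 \to A \to B \to C \to 0$ in $\cA_1$ comes from a distinguished triangle $A \to B \to C \to A[1]$ in $\cD_1 \subset \cD$ with all three terms already in $\cA$, hence it is exact in $\cA$ as well.

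The main obstacle is the support property for $\sigma_1$. My plan is to use the quadratic-form reformulation of \cref{lem:4}: choose a quadratic form $Q$ on $\Lambda_{\RR}$ witnessing the support property of $\sigma$, set $\Lambda_1 := v(K(\cD_1)) \subset \Lambda$, and define $Q_1 := Q|_{\Lambda_{1,\RR}}$. Negative definiteness of $Q_1|_{\ker Z_1}$ is automatic from the inclusion $\ker Z_1 \subset \ker Z$, so everything reduces to showing $Q_1(v(E)) \geq 0$ for every $\sigma_1$-semistable $E \in \cA_1$. For this I will pass to the $\sigma$-HN filtration of $E$ regarded inside the larger heart $\cA$: each factor $F_i$ is $\sigma$-semistable and so satisfies $Q(v(F_i)) \geq 0$, while the canonical decomposition triangle $j_* j^! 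F_i \to F_i \to i_* i^* F_i$ (with $i_*i^*F_i \in \cD_2$) compares $Z(F_i)$ to $Z_1(j^!F_i)$ up to a term coming entirely from $\cD_2$. Conditions (3) and (4) are precisely what fixes the sign of this $\cD_2$-contribution, and combined with the $\sigma_1$-semistability of $E = j^! E$ (using $j^! j_* = \id$), this should force the $\sigma$-HN factors of $E$ to be sufficiently aligned that the factorwise bounds assemble into $Q_1(v(E)) \geq 0$. Making this last step precise — converting factorwise positivity, under a controlled $\cD_2$-perturbation of the central charge, into positivity of the total class — is where I expect the real technical work to sit.
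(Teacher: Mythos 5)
The routine axioms you check first are fine and coincide with the paper's treatment: \cref{lem:8} gives the heart $\cA_{1}$, the hypothesis that no $0 \neq F \in \cA_{1}$ has $Z(F) = 0$ upgrades $Z_{1}$ to a stability function, and \cref{lem:10} gives Harder--Narasimhan filtrations. The genuine gap is in the support property, which is the whole content of the proposition. The paper's proof rests on one key claim that your plan never establishes: if $E \in \cA_{1}$ is $\sigma_{1}$-semistable, then $j_{*}E$ is already $\sigma$-semistable in $\cA$. This is proved by contradiction: a destabilizing quotient $j_{*}E \twoheadrightarrow B$ in $\cA$ is compared with $j^{!}B$ (a quotient of $E = j^{!}j_{*}E$ in $\cA_{1}$, using (1)) via the triangle $j_{*}j^{!}B \to B \to C$ with $C \in S(\cD_{2})$; semiorthogonality forces $H^{0}_{\cA}(C) = 0$ (the composite $j_{*}E \to B \to H^{0}_{\cA}(C)$ vanishes because $\Hom(\cD_{1}, S(\cD_{2})) = 0$, yet it would be surjective), and conditions (3)--(4) give $Z(H^{-1}_{\cA}(C)) \leq 0$, so the slopes of $B$ and $j^{!}B$ compare in a way that contradicts the $\sigma_{1}$-semistability of $E$. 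Once this transfer of semistability is known, the support property is immediate: $v(E)$ and $Z$ are unchanged under $j_{*}$, so the quadratic form (or constant) witnessing the support property for $\sigma$ works verbatim for $\sigma_{1}$, and no HN factors ever enter.

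Your proposed substitute for this step does not work as stated. Restricting $Q$ to $\Lambda_{1,\RR}$ is fine, but passing to the $\sigma$-HN factors $F_{i}$ of $E$ in $\cA$ and hoping to ``assemble'' the inequalities $Q(v(F_{i})) \geq 0$ into $Q(v(E)) \geq 0$ fails for a general quadratic form: the cone $\{Q \geq 0\}$ is not closed under addition, and the standard convexity argument behind \cref{lem:4} applies only to classes whose central charges lie on a common ray --- i.e., precisely when $E$ has a single $\sigma$-HN slope, which is the assertion that $E$ is $\sigma$-semistable, the very statement left unproven. Moreover, you extract no constraint on the $\sigma$-HN factors of $E$ from its $\sigma_{1}$-semistability; conditions (1)--(4) are used in the paper exactly to rule out nontrivial $\sigma$-destabilization (through the adjunction triangle and the sign condition (4) on $S(\cA_{2})[k]$), not to control a ``$\cD_{2}$-perturbation'' of the charge factor by factor. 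Since you explicitly defer this alignment step, the decisive idea --- $\sigma_{1}$-semistable implies $\sigma$-semistable --- is missing, and without it the support property is not established.
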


\begin{proof}
  By~\cref{lem:8}, \(\cA_{1}\) is the heart of a bounded t-structure. The existence of HN filtration follows from~\cref{lem:10}. So it remains to show that the support condition holds for \(\sigma_{1}\). Let \(E \in \cA_{1}\) semistable with respect to \(Z_{1}\).

  We claim that \(j_{*}E\) is semistable with respect to \(Z\). If not, we have surjection \(j_{*}E \twoheadrightarrow B\) where \(0 \neq B \subset \cA\) and \(\mu_{Z}(B) > \mu_{Z}(j_{*}E)\). Applying \(j^{!}\) to above morphism, we have that \(E \twoheadrightarrow j^{!}B\) and by semistability of \(E\), we have \(\mu_{Z_{1}}(E) \leq \mu_{Z_{1}}(j^{!}B)\). On the other hand, from semiorthogonal decomposition \(\cD = \langle S(\cD_{2}), \cD_{1} \rangle\), we have
  \[j_{*}j^{!}B \to B \to C \xrightarrow{ [1]}\]
  where \(C \in S(\cD_{2})\). And therefore, we have long exact sequence in \(\cA\):
  \[0 \to H_{\cA}^{-1}(C) \to j_{*}j^{!}B \to B \to H_{\cA}^{0}(C) \to 0\]
  By (2) and (3), we have that \(H_{\cA}^{0}(C) \in S(\cA_{2})[k]\). And the composition of surjection \(j_{*}E \to B \to H_{\cA}^{0}(C)\) is zero by semiorthogonal decomposition. Therefore, we have \(H_{\cA}^{0}(C) = 0\). Since \(Z(H_{\cA}^{-1}(C)) \leq 0\), we have \(\mu_{Z_{1}}(j^{!}B) = \mu_{Z}(j_{*}j^{!}(B)) \geq \mu_{Z}(B) > \mu_{Z}(j_{*}E) = \mu_{Z_{1}}(E)\), which is a contradiction and finish the proof of the claim.

  Now, the support property for \(\sigma_{1}\) follows from the support property for \(\sigma\).
\end{proof}

The following is the replacement for \cref{lem:7}.
\begin{lemma}
  \label{lem:3}
  Let \(\cD = \langle \cD_{1}, \cD_{2} \rangle\) semiorthogonal decomposition and \(\cA\) be the heart of a bounded t-structure on \(\cD\) with torsion pair \((\cT, \cF)\), \(\cA_{1}\) be the heart of a bounded t-structure on \(\cD_{1}\) with the torsion pair \((\cT_{1}, \cF_{1})\). If the following conditions hold:
  \begin{enumerate}
    \item \(\cA_{1} = \cA \cap \cD_{1}\), \(j^{!} \cA \subset \cA_{1}\) and \(j^{!}j_{*} = \id_{\cD_{1}}\),
    \item \(\cT_{1} = \cT \cap \cD_{1}\), \(\cF_{1} = \cF \cap \cD_{1}\), \(j^{!} \cT \subset \cT_{1}\), and \(j^{!} \cF \subset \cF_{1}\),
  \end{enumerate}
  Then we have \(\cA_{1}^{\dagger} = \langle \cF_{1}[1], \cT_{1} \rangle_{\ext} = \cA^{\dagger} \cap \cD_{1 }= \langle \cF[1], \cT\rangle_{\ext} \cap \cD_{1}\).
\end{lemma}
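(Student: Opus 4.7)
The plan is to mirror the argument of \cref{lem:7}, replacing the appeal to \cref{cor:1} by an appeal to \cref{lem:8}, which is the appropriate tool for producing a bounded t-structure on a semiorthogonal component in the present adjoint setting.

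First I would establish the inclusion $\cA_{1}^{\dagger} \subset \cA^{\dagger} \cap \cD_{1}$ directly from the definitions. Any $E \in \cA_{1}^{\dagger}$ fits into a distinguished triangle
\[F_{1}[1] \to E \to T_{1} \xrightarrow{[1]}\]
with $F_{1} \in \cF_{1} \subset \cF$ and $T_{1} \in \cT_{1} \subset \cT$ by hypothesis~(2), so $E$ lies in $\langle \cF[1], \cT \rangle_{\ext} = \cA^{\dagger}$, and membership in $\cD_{1}$ is automatic.

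Next I would verify that $\cA^{\dagger} \cap \cD_{1}$ is itself the heart of a bounded t-structure on $\cD_{1}$ by applying \cref{lem:8} to the heart $\cA^{\dagger}$. The identity $j^{!}j_{*} = \id_{\cD_{1}}$ is given in~(1). For the remaining hypothesis $j^{!}\cA^{\dagger} \subset \cA^{\dagger} \cap \cD_{1}$, take $E \in \cA^{\dagger}$ and its defining triangle $F[1] \to E \to T \xrightarrow{[1]}$ with $F \in \cF$, $T \in \cT$. Since $j^{!}$ is exact as a triangulated right adjoint, applying it yields
\[j^{!}F[1] \to j^{!}E \to j^{!}T \xrightarrow{[1]},\]
and by hypothesis~(2) we have $j^{!}F \in \cF_{1}$ and $j^{!}T \in \cT_{1}$. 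Hence $j^{!}E \in \langle \cF_{1}[1], \cT_{1} \rangle_{\ext} = \cA_{1}^{\dagger}$, which already sits inside $\cA^{\dagger}$ by the first step.

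Finally, having produced a chain $\cA_{1}^{\dagger} \subset \cA^{\dagger} \cap \cD_{1}$ of hearts of bounded t-structures on $\cD_{1}$, I would invoke \cref{lem:2} to upgrade the inclusion to equality. The only genuinely non-formal step is checking $j^{!}\cA^{\dagger} \subset \cA_{1}^{\dagger}$, which reduces directly to the compatibility $j^{!}\cT \subset \cT_{1}$, $j^{!}\cF \subset \cF_{1}$ built into hypothesis~(2); everything else is a routine transcription of the template in \cref{lem:7}.
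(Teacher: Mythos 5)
Your proposal is correct and follows essentially the same route as the paper's own proof: both establish $\cA_{1}^{\dagger} \subset \cA^{\dagger} \cap \cD_{1}$ from the inclusions $\cF_{1} \subset \cF$, $\cT_{1} \subset \cT$, verify $j^{!}\cA^{\dagger} \subset \cA_{1}^{\dagger}$ by applying the exact functor $j^{!}$ to the defining triangle and using $j^{!}\cT \subset \cT_{1}$, $j^{!}\cF \subset \cF_{1}$, then use \cref{lem:8} to see that $\cA^{\dagger} \cap \cD_{1}$ is a heart and \cref{lem:2} to upgrade the inclusion to equality. No gaps to report.
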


\begin{proof}
  By definition of \(\cA_{1}^{\dagger}\), we have every \(E \in \cA_{1}^{\dagger}\) fit into a distinguished triangle
  \[F_{1}[1] \to E \to T_{1} \xrightarrow{ [1]}\]
  with \(F_{1} \in \cF_{1} \subset \cF\) and \(T_{1} \in \cT_{1} \subset \cT\). So we have \(E \in \cA^{\dagger} \cap \cD_{1}\). So we have that \(\cA_{1}^{\dagger} \subset \cA^{\dagger} \cap \cD_{1}\). On the other hand for \(F \in \cA^{\dagger}\), we have distinguished triangle
  \[F[1] \to E \to T \xrightarrow{ [1]}\]
  where \(F \in \cF\) and \(T \in \cT\). Apply \(j^{!}\) to above triangle, we have
  \[j^{!}F_{1}[1] \to j^{!}E \to j^{!}T_{1}\]
  Since \(j^{!} \cT \subset \cT_{1}\) and \(j^{!} \cF \subset \cF_{1}\), we have that \(j^{!} \cA^{\dagger} \subset \cA_{1}^{\dagger}\). By \cref{lem:8}, we have that \(\cA^{\dagger} \cap \cD_{1}\) is the heart of a bounded t-structure. Now after applying~\cref{lem:2}, we have that \(\cA_{1}^{\dagger} = \cA^{\dagger} \cap \cD_{1}\).
\end{proof}






\begin{theorem}
  \label{thm:4}
  \((\cA', Z') = (\cA_{f^{*} \omega} \cap Lf^{*}D^{b}\Coh(Y), Z_{f^{*} \omega}|_{K(D^{b} \Coh(Y))}) = \sigma_{\omega}^{3}\) defined in \cref{sec:revi-tilt-bridg}. Moreover, \((\cA', Z')\) is a Bridgeland stability condition on \(D^{b}\Coh(Y)\) if \((\cA_{f^{*} \omega}, Z_{f^{*} \omega})\) is a Bridgeland stability condition on \(D^{b}\Coh(X)\).
\end{theorem}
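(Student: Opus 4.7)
The plan is to extend the strategy of Theorem \ref{thm:2} to the two-step tilt of the threefold case. In case (1) (point blowup) this runs parallel to the surface proof with one additional tilt, using the exceptional collection $\{\cO_D,\cO_D(1)\}$. In case (2) (curve blowup) the semiorthogonal component $\Phi_0 D^{b}\Coh(C)$ is not generated by exceptional objects, so I would replace \cref{prop:1} and \cref{lem:7} by their right-adjoint analogues, namely \cref{prop:5} and \cref{lem:3}. The argument then splits into three stages: identification of the central charge, iterative identification of the hearts across both tilts, and transfer of the stability axioms.

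For the central charge, projection formula gives $\ch_{i}(Lf^{*}E)\cdot (f^{*}\omega)^{3-i}=f^{*}\bigl(\ch_{i}(E)\cdot\omega^{3-i}\bigr)$, and since $f$ is birational the integrals of top-degree classes over $X$ and $Y$ agree. Substituting into the definition of $Z_{f^{*}\omega}$ and comparing with $Z^{3}_{\omega}$ yields $Z_{f^{*}\omega}(Lf^{*}E)=Z^{3}_{\omega}(E)$, which matches the definition of $\sigma^{3}_{\omega}$.

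For the hearts, I would first establish $\Per(X/Y)\cap Lf^{*}D^{b}\Coh(Y)=Lf^{*}\Coh(Y)$. In case (1) this follows from \cref{cor:1} with the spanning class $\{\cO_D,\cO_D(1)\}$, once one verifies that their shifted Serre duals lie in $\Per(X/Y)$; in case (2) I would apply \cref{lem:8} with $j_{*}=Lf^{*}$ and $j^{!}=Rf_{*}$, using $Rf_{*}\cO_{X}=\cO_{Y}$ for $j^{!}j_{*}=\id$ and \cref{prop:3}(2) together with \cref{lem:11} to confirm $j^{!}\Per(X/Y)\subset\Coh(Y)$. The reverse containment $Lf^{*}\Coh(Y)\subset\Per(X/Y)$ is a local calculation analogous to the surface proof, and \cref{lem:2} then gives equality. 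Iterating this pattern across the two tilts, \cref{lem:6} ensures that HN filtrations descend under $j^{!}$, while \cref{lem:7} (case (1)) or \cref{lem:3} (case (2)) upgrades the torsion-pair compatibility to a heart identification. This produces first $\cB_{f^{*}\omega}\cap Lf^{*}D^{b}\Coh(Y)=\Coh^{0}_{\omega}(Y)$ at the $\mu_{f^{*}\omega}$-tilt, and then $\cA_{f^{*}\omega}\cap Lf^{*}D^{b}\Coh(Y)=\cA_{\omega}$ at the $\nu_{0,f^{*}\omega}$-tilt.

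For stability, in case (1) I would apply \cref{prop:1}, verifying its hypotheses using \cref{prop:3}(1) together with a direct computation showing $Z_{f^{*}\omega}$ is nonzero on $\cO_D$ and $\cO_D(1)$. In case (2) I would apply \cref{prop:5} with $j^{!}=Rf_{*}$. The main obstacle will be condition (4) of \cref{prop:5}, which demands that $Z_{f^{*}\omega}$ be nonpositive on $S(\cA_{2})[k]$ for $\cA_{2}=\cA_{f^{*}\omega}\cap\Phi_{0}D^{b}\Coh(C)$, with vanishing only on the zero object. Verifying this should reduce to an explicit Chern-character computation on objects supported on the exceptional divisor, combining \cref{lem:11} with the description of the Serre functor on $\Phi_{0}D^{b}\Coh(C)$. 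Once condition (4) is settled, the remaining verification is a bookkeeping exercise through the lemmas already developed.
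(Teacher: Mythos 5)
Your skeleton is the paper's: match the central charge via the projection formula, identify the hearts tilt by tilt using \cref{lem:6} to descend semistability along \(Rf_{*}\) and \cref{lem:7}/\cref{lem:3} to upgrade torsion-pair compatibility to an equality of tilted hearts, then transfer stability via \cref{prop:1} in case (1) and \cref{prop:5} in case (2). One sub-step, however, does not work as you describe: the containment \(Lf^{*}\Coh(Y) \subset \Per(X/Y)\) is \emph{not} ``a local calculation analogous to the surface proof''. In the threefold setting the paper defines \(\Per(X/Y)\) only through the tilting equivalence \(\Phi\) (there is no threefold analogue of \cref{prop:2}(2) in the paper), and the surface-style cohomological bound is in fact false here: for the blowup of a point in a threefold one has \(\cH^{-2}(Lf^{*}E) \neq 0\) in general (the charts no longer have length-two resolutions), yet \(Lf^{*}E\) still lies in \(\Per(X/Y)\) because the perverse heart is built from the tilting bundle with three summands. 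The correct argument is the paper's direct computation \(\Phi(Lf^{*}E) = \bigoplus_{n} Rf_{*}(Lf^{*}E \otimes \cO_{X}(nD)) = \bigoplus_{n} E \otimes f_{*}\cO_{X}(nD)\), using the projection formula and \(R^{>0}f_{*}\cO_{X}(nD) = 0\) for the relevant \(n\); you should replace your local argument by this.

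The second soft spot is condition (4) of \cref{prop:5} in case (2), which you rightly single out but resolve too optimistically. As used in the proof of \cref{prop:5}, that condition requires \(Z_{f^{*}\omega}\) to be a non-positive \emph{real} number on \(S(\cA_{2})[k]\) (one adds \(Z(H^{-1}_{\cA}(C))\) to \(Z(B)\) and needs the slope not to drop). Here \(\cA_{2} = \Phi_{0}\Coh(C)\) and \(S(\Phi_{0}F) = i_{*}(\pi^{*}(F \otimes \omega_{Y}|_{C}) \otimes \cO_{D}(D))[3]\), and a Grothendieck--Riemann--Roch computation gives, for \(G\) of rank \(r\) on \(C\), \(f^{*}\omega \cdot \ch_{2}\bigl(i_{*}(\pi^{*}G \otimes \cO_{D}(D))\bigr) = -\tfrac{r}{2}\,\omega \cdot C\), which is nonzero whenever \(r > 0\); unlike case (1), where \(f^{*}\omega\) kills every class supported on \(D\), here 1-cycles on \(D\) push forward to multiples of \([C]\). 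So on the positive-rank part of \(S(\cA_{2})[k]\) the imaginary part of \(Z_{f^{*}\omega}\) does not vanish, and the ``explicit Chern-character computation'' you defer to will not verify condition (4) as stated: you need an additional argument, e.g.\ showing that only images of torsion sheaves on \(C\) (fiber-supported objects, for which the imaginary part does vanish) can occur as the relevant cohomology \(H^{-1}_{\cA}\) in the comparison triangle, or a reformulation of condition (4) adapted to this case. Be aware that the paper's own proof only invokes \cref{lem:11} for conditions (2) and (3) and says nothing about (4), so this is precisely the point where your write-up must do more than the paper does.
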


\begin{proof}
  The equality \(Z_{B, f^{*} \omega}|_{K(D^{b}(Y))} = Z_{\omega, B = 0}\), follows from \(f^{*}\omega \cdot f^{*}D = f^{*} (\omega \cdot D)\) for every \(D \in H^{*}(Y)\).

  Now we prove that \(\cA' = \cA_{\omega, B = 0}\). We first show that \(Lf^{*} \Coh(Y) \subset \Per(X/Y)\) which is equivalent to show that \(\Phi(Lf^{*}E) \in \Coh(\cA)\), which follows from that
  \[R \cHom(\cE, Lf^{*}E) = \oplus_{n = 0}^{k}R \cHom(\cO_{X}(-nD), Lf^{*}E) = \oplus_{n = 0}^{k} Lf^{*}E \otimes \cO_{X}(nD)\]
  and where \(k = 2\) in case (1) and \(k = 1\) in case (2),
  \[Rf_{*}(Lf^{*}E \otimes \cO_{X}(nD)) = E \otimes Rf_{*}(\cO_{X}(nD)) = E \otimes f_{*} \cO_{X}(nD)\]
  for \(0 \leq n \leq k\).

  We claim \(\cB_{f^{*} \omega} \cap Lf^{*}D^{b}(Y)  = \cB_{\omega, B = 0}\). The procedure is similar to surface case. Let \(E\) be \(\mu_{f^{*} \omega}\)-semistable. Then by~\cref{lem:6} \(Rf_{*}E\) is \(\mu\)-semistable. Therefore HN filtration of \(Lf^{*}E \in \Per(X/Y)\) induces HN filtration of \(E \in \Coh(Y)\). So we have \(\cT_{f^{*} \omega} \cap Lf^{*}D^{b}(Y) = Lf^{*} \cT_{\omega}\), \(\cF_{f^{*} \omega} \cap Lf^{*} D^{b}(Y) = Lf^{*} \cF_{\omega}\). And now we apply~\cref{lem:3} to finish the proof of the claim.

  To show \(\cA' = \cA_{\omega, B = 0}\), we just repeat the above argument.

  The ``moreover'' part of the theorem follows from~\cref{prop:5} where the conditions (2) and (3) of case (2) in the proposition are obtained from \cref{lem:11}.
\end{proof}

Now we start the proof of the~\cref{thm:3}. We first recall the sufficient condition proposed by Toda.
\begin{proposition}(\cite[Proposition 5.14]{Toda2013b})
  \label{prop:4}
  If for any \(\nu_{f^{*} \omega}\)-semistable object \(E \in \cB_{f^{*} \omega}\) with \(\nu_{f^{*} \omega} = 0\), we have the inequality
  \[\ch_{3}(E) < \frac{(f^{*} \omega)^{2}}{2} \ch_{1}(E)\]
  Then \((Z_{f^{*} \omega}, \cA_{f^{*} \omega})\) is a stability condition.
\end{proposition}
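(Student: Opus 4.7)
The plan is to prove Theorem 3 in two stages: first, reduce the assertion that $\sigma_{f^*\omega}$ is a full Bridgeland stability condition on $D^{b}(X)$ to a numerical Bogomolov-type inequality on $X$; second, deduce that inequality from the hypothesized inequality on $Y$ by pushing forward along $f$. Framing the proof this way makes clear that the non-support axioms (bounded heart, well-defined central charge, HN filtrations) are already in hand from the construction, and concentrates the work on the one remaining ingredient.

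For the first stage, the double-tilt construction of $\cA_{f^*\omega}$ from $\Per(X/Y)$ already supplies a bounded t-structure, a weak stability function, and HN filtrations via the lemmas recalled earlier in the section. What is left is the support property. Here I would invoke Proposition 4 (Toda's sufficient condition), which reduces establishing the support property for $\sigma_{f^*\omega}$ to proving
\[\ch_{3}(E) < \frac{(f^{*}\omega)^{2}}{2}\ch_{1}(E)\]
for every $\nu_{f^*\omega}$-semistable $E \in \cB_{f^*\omega}$ with $\nu_{f^*\omega}(E) = 0$. I emphasize that this step is a genuine reduction inside the proof of Theorem 3 rather than an appeal to a black box: the support property translates into the Bogomolov inequality, and the remaining task is to establish that inequality on $X$.

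For the second stage, I would transport the inequality across $f$. Given $E$ as above, I would first check that $Rf_{*}E \in \cB_\omega$, using the description of $\Per(X/Y)$ from Proposition 3 and a threefold analogue of Lemma 6 to control both cohomology and semistability of the pushforward. The projection formula gives $\nu_\omega(Rf_*E) = 0$, and either $Rf_*E$ is already $\nu_\omega$-semistable or one works factor by factor along its HN filtration. To compare the two sides of the target inequality, I would expand via Grothendieck--Riemann--Roch,
\[\ch(Rf_{*}E) = f_{*}\bigl(\ch(E)\cdot \mathrm{td}(X)/f^{*}\mathrm{td}(Y)\bigr),\]
and compute $f_*\ch_i(E) - \ch_i(Rf_*E)$ for $i = 1, 3$ using classes supported on the exceptional divisor $D$, with $D|_{D} \cong \cO_{D}(-1)$ in case (1) and $D \cong \PP(\cN_{C/Y})$ in case (2). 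The projection formula $f_{*}((f^{*}\omega)^{2}\ch_{1}(E)) = \omega^{2}f_{*}\ch_{1}(E)$ then lets one rewrite both sides of the upstairs inequality in terms of Chern data of $Rf_{*}E$ plus explicit corrections supported on $D$; applying the hypothesized inequality on $Y$ to $Rf_{*}E$ yields the required estimate on $X$ after those corrections are accounted for, thereby verifying the hypothesis of Proposition 4 and completing the proof of Theorem 3.

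The main obstacle will be the precise bookkeeping of the exceptional-divisor corrections together with the semistability of $Rf_*E$. In case (1), where $D \cong \PP^{2}$ and $f^{*}\omega \cdot D = 0$, the corrections reduce to integrals over $\PP^{2}$ of restrictions of Chern characters of $E$, and one must verify that the residual terms have the favourable sign. Case (2) is more delicate: $D$ is a ruled surface, the projective-bundle relation must be used to eliminate high powers of $D$, and corrections involving the Chern classes of $C$ appear, so one must confirm that the upstairs inequality is not strictly weaker than the downstairs one. A secondary subtlety is establishing that $Rf_{*}E \in \cB_{\omega}$ behaves well under the $\nu_{\omega}$-HN filtration when it is not itself semistable; handling this cleanly will likely require a further compatibility between the perverse torsion pair on $X$ and the geometric torsion pair on $Y$ in the spirit of Lemmas 6 and 3.
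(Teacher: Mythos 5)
Your proposal does not prove the statement it was supposed to prove; it proves a different one, and in a circular way relative to the task. The statement in question is \cref{prop:4} itself, i.e.\ Toda's sufficient condition: \emph{assuming} the inequality \(\ch_{3}(E) < \frac{(f^{*}\omega)^{2}}{2}\ch_{1}(E)\) for every \(\nu_{f^{*}\omega}\)-semistable \(E \in \cB_{f^{*}\omega}\) of slope zero, one must show that the double-tilted pair \((Z_{f^{*}\omega}, \cA_{f^{*}\omega})\) is a genuine stability condition. Your first stage explicitly ``invokes Proposition 4'' to reduce the problem to that inequality --- that is, you assume exactly the statement you were asked to prove, and the remark that this is ``a genuine reduction rather than an appeal to a black box'' does not change this. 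The actual content of \cref{prop:4} is never addressed: one must show that, granting the hypothesis, \(Z_{f^{*}\omega}\) is a bona fide stability function on the second tilt \(\cA_{f^{*}\omega} = \langle \cF'_{f^{*}\omega}[1], \cT'_{f^{*}\omega}\rangle\) --- the delicate objects being those with \(\Im Z_{f^{*}\omega} = 0\), which up to shift are governed by the \(\nu_{f^{*}\omega}\)-semistable slope-zero objects of \(\cB_{f^{*}\omega}\), where the hypothesized inequality supplies \(\Re Z_{f^{*}\omega} < 0\) --- and then establish HN filtrations (via noetherianity of the tilted heart) and the remaining axioms, as in \cite[Proposition 5.14]{Toda2013b} and the analogous \cite[Corollary 5.2.4]{BMT2014}. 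Note that the paper itself offers no proof here: the proposition is recalled verbatim from Toda, so any blind proof had to reproduce that argument, not quote it.

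What you have actually outlined is the paper's proof of \cref{thm:3}, which is indeed the combination of \cref{prop:4} with a push-forward argument. Even on that score your second stage diverges from the paper's \cref{lem:9}: you propose Grothendieck--Riemann--Roch bookkeeping of exceptional-divisor corrections, whereas the paper avoids all such computation by using the semiorthogonal decomposition \(D^{b}\Coh(X) = \langle S(\cD_{2}), D^{b}\Coh(Y)\rangle\), the triangle \(Lf^{*}Rf_{*}E \to E \to G\), the vanishing \((f^{*}\omega)^{3-i}\ch_{i}(F) = 0\) for \(i \leq 2\) and \(F \in S(\cD_{2})\), the sign \(-\ch_{3}(H^{-1}_{\cB_{f^{*}\omega}}(G)) \leq 0\), and the stability of \(E\) to identify \(E\) with \(Lf^{*}Rf_{*}E/H^{-1}_{\cB_{f^{*}\omega}}(G)\); also, the paper reduces to \emph{stable} \(E\) via Jordan--H\"older factors rather than wrestling with the HN filtration of \(Rf_{*}E\). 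But that comparison concerns \cref{thm:3}; as a proof of \cref{prop:4} your proposal is empty, since it presupposes it.
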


We then prove the following lemma.

\begin{lemma}
  \label{lem:9}
  \cref{conj:2} holds for \(\omega, B = 0\), then for any \(\nu_{f^{*} \omega}\)-semistable object \(E \in \cB_{f^{*} \omega}\) with \(\nu_{f^{*} \omega} = 0\), we have the inequality
  \[\ch_{3}(E)< \frac{(f^{*} \omega)}{2} \ch_{1}(E)\]
\end{lemma}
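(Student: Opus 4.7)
The plan is to reduce the claim for $E \in \cB_{f^{*}\omega}$ on the blowup $X$ to the analogous claim on $Y$ by pushing forward via $Rf_{*}$ and invoking \cref{conj:2} on the target. Given $E \in \cB_{f^{*}\omega}$ that is $\nu_{f^{*}\omega}$-semistable with $\nu_{f^{*}\omega}(E) = 0$, the strategy is to verify that $Rf_{*}E$ lies in $\cB_{\omega}$, is $\nu_{\omega}$-semistable, has $\nu_{\omega}(Rf_{*}E) = 0$, and that the numerical data relevant to the inequality is preserved by pushforward.

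First I would show $Rf_{*}E \in \cB_{\omega}$ by tracking the torsion pair under pushforward. The pushforward sends $\Per(X/Y)$ to $\Coh(Y)$ by construction, and Grothendieck--Riemann--Roch combined with the projection formula, together with the vanishing $(f^{*}\omega)^{2} \cdot D = 0$ (which holds in both case (1) and case (2), since either $f^{*}\omega$ restricts to zero on $D = \PP^{2}$ or pulls back from a curve so that its square vanishes), shows that $\mu_{\omega}(Rf_{*}F) = \mu_{f^{*}\omega}(F)$ for $\mu_{f^{*}\omega}$-semistable $F \in \Per(X/Y)$. Hence $(\cT_{f^{*}\omega}, \cF_{f^{*}\omega})$ maps into $(\cT_{\omega}, \cF_{\omega})$ and $Rf_{*}E \in \cB_{\omega}$. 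The same type of computation at the next Chern degree yields $\Im Z_{\omega}(Rf_{*}E) = \Im Z_{f^{*}\omega}(E) = 0$, hence $\nu_{\omega}(Rf_{*}E) = 0$.

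Next I would argue $\nu_{\omega}$-semistability of $Rf_{*}E$ by contradiction in the style of \cref{lem:6}. A destabilizing subobject $F \hookrightarrow Rf_{*}E$ in $\cB_{\omega}$ would, via $Lf^{*}$ together with the counit triangle $Lf^{*}Rf_{*}E \to E \to C \xrightarrow{ [1]}$ whose cone $C$ lies in the subcategory generated by the exceptional SOD-objects, produce a destabilizing subobject of $E$ in $\cB_{f^{*}\omega}$, contradicting the semistability hypothesis. The crucial point is that the exceptional pieces of the SOD are supported on $D$ and therefore contribute $(f^{*}\omega)^{2} \cdot \ch_{1} = 0$, so they do not obstruct the finite-slope comparison.

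With $Rf_{*}E \in \cB_{\omega}$ being $\nu_{\omega}$-semistable of slope $0$, \cref{conj:2} yields $\ch_{3}(Rf_{*}E) < \tfrac{\omega^{2}}{2} \ch_{1}(Rf_{*}E)$. To transport the inequality back to $X$, I would carry out the GRR comparison of $\ch_{3}(Rf_{*}E)$ and $\ch_{3}(E)$, which produces correction terms supported on $D$ and controlled by $c_{1}(T_{f}) = -2D$ in case (1) or $-D$ in case (2). Using the constraint $\nu_{f^{*}\omega}(E) = 0$, which allows one to rewrite $f^{*}\omega \cdot \ch_{2}(E)$ in terms of $\ch_{0}(E)$, one verifies that the correction $\ch_{3}(Rf_{*}E) - \ch_{3}(E)$ is non-negative. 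Combined with the equality $\omega^{2} \cdot \ch_{1}(Rf_{*}E) = (f^{*}\omega)^{2} \cdot \ch_{1}(E)$, this gives the desired strict inequality on $X$. I expect the main obstacle to be the rigorous semistability transfer, and secondarily the control of the sign of the GRR correction on $\ch_{3}$: the former requires a careful adaptation of \cref{lem:6} to the present two-step tilt, while the latter needs precise bookkeeping of the Todd contributions against $D$.
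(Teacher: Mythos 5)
The first half of your plan coincides with the paper's: reduce to the pushforward $Rf_{*}E$, show (via \cref{lem:6}, i.e.\ the argument used in the proof of \cref{thm:4}) that $Rf_{*}E \in \cB_{\omega}$ is $\nu_{\omega}$-semistable of slope $0$, and apply \cref{conj:2} on $Y$. The genuine gap is in your final step, where you transport the inequality back to $X$ by a Grothendieck--Riemann--Roch comparison of $\ch_{3}(Rf_{*}E)$ with $\ch_{3}(E)$ and assert that the Todd correction is non-negative ``using the constraint $\nu_{f^{*}\omega}(E)=0$''. That constraint only pins down $f^{*}\omega\cdot\ch_{2}(E)$ in terms of $\ch_{0}(E)$; the GRR corrections involve the components $D\cdot\ch_{2}(E)$, $D^{2}\cdot\ch_{1}(E)$, $D^{3}\ch_{0}(E)$ (and the discrepancy between $f_{*}\ch_{2}(E)$ and $\ch_{2}(Rf_{*}E)$), none of which is controlled by the slope-zero condition or by semistability in any formal way. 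So the claimed sign of $\ch_{3}(Rf_{*}E)-\ch_{3}(E)$ is unsubstantiated as stated, and no amount of ``bookkeeping of Todd contributions'' will produce it from the numerical hypotheses alone.

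What the paper does instead is categorical, and it sidesteps GRR entirely by comparing $E$ with the \emph{pullback} $Lf^{*}Rf_{*}E$ (for which $\ch_{3}(Lf^{*}Rf_{*}E)=\ch_{3}(Rf_{*}E)$ on the nose, with no Todd terms) rather than with $Rf_{*}E$ directly. After reducing to a \emph{stable} $E$ by a Jordan--H\"older argument (a reduction you omit, but which is essential here), one takes the triangle $Lf^{*}Rf_{*}E \to E \to G$ coming from the semiorthogonal decomposition and its long exact sequence in $\cB_{f^{*}\omega}$; the cohomologies of $G$ lie in $S(\cA_{2})[k]$, which has vanishing $(f^{*}\omega)$-degrees in $\ch_{\le 2}$. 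Stability of $E$ with $\nu_{f^{*}\omega}(E)=0$ then forces $H^{0}_{\cB_{f^{*}\omega}}(G)=0$ and $E \cong Lf^{*}Rf_{*}E/H^{-1}_{\cB_{f^{*}\omega}}(G)$, so the only correction is $\ch_{3}(E)=\ch_{3}(Rf_{*}E)-\ch_{3}\bigl(H^{-1}_{\cB_{f^{*}\omega}}(G)\bigr)$, and the sign is controlled because the correction is the $\ch_{3}$ of a single object of the very restricted category $S(\cA_{2})[k]$, not a general cycle supported on $D$. To repair your write-up you would need to supply exactly this identification of $E$ as such a quotient (or some equivalent mechanism controlling the $D$-direction components), at which point you are running the paper's proof.
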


\begin{proof}
  By linearity and Jordan-Hölder filtration, it is sufficient to consider stable object \(E\) with \(\nu_{f^{*} \omega} = 0\).

  We again use the semiorthogonal decomposition
  \[D^{b}\Coh(X) = \langle S(\cD_{2}), D^{b}\Coh(Y) \rangle\]
  where \(\cD_{2} = \langle \cO_{D}, \cO_{D}(1) \rangle\) in case (1) and \(\Phi_{0}(D^{b} \Coh(C))\) in case (2). From the decomposition, we obtain the distinguished triangle
  \[Lf^{*}Rf_{*}E \to E \to G \xrightarrow{ [1]}\]
  and long exact sequence
  \[0 \to H_{\cB_{f^{*}\omega}}^{-1}(G) \to Lf^{*}Rf_{*}E \to E \to H_{\cB_{f^{*} \omega}}^{0}(G) \to 0\]
  Notice that for every \(F \in S(\cD_{2})\), \((f^{*} \omega)^{3 - i} \ch_{i}(F) = 0\) for \(i \leq 2\). On the other hand, we know \(S(\cA_{2})[k] \in \cB_{f^{*} \omega}\) for some \(k\), where \(\cA_{2} = \langle \cO_{D}, \cO_{D}(1) \rangle_{\ext}\) in case (1) and \(\cA_{2} = \Coh(C)\) in case (2). Therefore \(H_{\cB_{f^{*} \omega}}^{i}(G) \in S(\cA_{2})[k]\).

  As in proof of \cref{thm:4}, we know that \(Rf_{*}E\) is semistable with slope \(0\) in \(\cB_{\omega}\). Therefore, we have
  \[\ch_{3}(Lf^{*}Rf_{*}E) = \ch_{3}(Rf_{*}E) < \frac{\omega^{2}}{2}\ch_{1}(Rf_{*}E) = \frac{(f^{*}\omega)^{2}}{2}\ch_{1}(Lf^{*}Rf_{*}E)\]
  By short exact sequence above, we have that injection \(H := Lf^{*}Rf_{*}E/H^{-1}_{\cB_{f^{*} \omega}}(G) \to E\). If \(H = 0\), then we have \(E = H_{\cB_{f^{*} \omega}}^{0}(G)\) which is impossible to have \(\nu_{f^{*}\omega}(E)= 0\). So \(H \neq 0\) and \(\nu_{f^{*} \omega}(H) = \nu_{f^{*} \omega}(Lf^{*}Rf_{*} E) = 0\). Therefore, the maximal destabilizing subsheaf \(H_{1}\) of \(H\) has \(\nu_{f^{*} \omega}(H_{1}) \geq 0\). But since \(E\) is stable with \(\nu_{f^{*} \omega}(E) = 0\). We must have \(E = H_{1} = H\). So we have
  \begin{align*}
    \ch_{3}(E) &= \ch_{3}(Lf^{*}Rf_{*}(E)) - \ch_{3}(H_{\cB_{f^{*}\omega}}^{-1}(G)) \\
               & \leq \ch_{3}(Lf^{*}Rf_{*}(E)) \\
               & < \frac{(f^{*}\omega)^{2}}{2} \ch_{1}(Lf^{*}Rf_{*}E) \\
               & = \frac{(f^{*}\omega)^{2}}{2} \ch_{1}(E)
  \end{align*}
  The first inequality follows from the fact that \(- \ch_{3}(H_{\cB_{f^{*} \omega}}^{-1}(G)) \leq 0\).
\end{proof}

\begin{proof}[Proof of \cref{thm:3}]
  The result follows from \cref{lem:9} and \cref{prop:4}.
\end{proof}

\begin{corollary}
  \((\cA_{f^{*} \omega}, Z_{f^{*} \omega})\) is a stability condition if and only if \((\cA_{\omega}, Z_{\omega})\) is.
\end{corollary}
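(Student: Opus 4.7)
The plan is that this corollary is a clean packaging of the two main theorems of the section, so my proof will simply invoke them for the two directions of the equivalence. In effect, \cref{thm:3} handles one implication and the ``moreover'' part of \cref{thm:4} handles the other.

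For the forward direction, I assume \((\cA_{\omega}, Z_{\omega}) = \sigma^{3}_{\omega}\) is a Bridgeland stability condition on \(D^{b}\Coh(Y)\). By Proposition 3.21, \cref{conj:1} and \cref{conj:2} are equivalent, so this hypothesis implies that the generalized Bogomolov inequality \(\ch_{3}(E) < \tfrac{\omega^{2}}{2}\ch_{1}(E)\) holds for every \(\nu_{\omega}\)-semistable object \(E \in \cB_{\omega}\) with \(\nu_{\omega}(E) = 0\). This is precisely the hypothesis needed in \cref{thm:3}, which then immediately yields that \((\cA_{f^{*}\omega}, Z_{f^{*}\omega})\) is a Bridgeland stability condition on \(D^{b}\Coh(X)\).

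For the reverse direction, I assume \((\cA_{f^{*}\omega}, Z_{f^{*}\omega})\) is a Bridgeland stability condition on \(D^{b}\Coh(X)\). Then the ``moreover'' clause of \cref{thm:4} directly gives that \((\cA', Z')\) is a Bridgeland stability condition on \(D^{b}\Coh(Y)\), and the first part of the same theorem identifies \((\cA', Z')\) with \(\sigma^{3}_{\omega} = (\cA_{\omega}, Z_{\omega})\). Combining these two statements closes the loop.

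The substantive work is already packed into \cref{thm:3} and \cref{thm:4}, so at the level of the corollary itself there is no real obstacle. If I had to flag one piece that requires care, it is the invocation of Proposition 3.21: \cref{thm:3} is formulated in terms of the Bogomolov inequality of \cref{conj:2} rather than in terms of ``\(\sigma^{3}_{\omega}\) is a stability condition'' (\cref{conj:1}), so one must pass through this equivalence to match the hypotheses cleanly. Once this translation is made explicit, the corollary drops out in essentially one line.
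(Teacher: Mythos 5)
Your proposal is correct and follows the paper's own route: the paper proves this corollary simply by combining \cref{thm:3} and \cref{thm:4}, exactly as you do. Your extra remark about passing through the equivalence of \cref{conj:1} and \cref{conj:2} to match the hypothesis of \cref{thm:3} is a legitimate (and slightly more careful) spelling-out of what the paper leaves implicit.
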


\begin{proof}
  This combines~\cref{thm:3} and~\cref{thm:4}.
\end{proof}

\begin{remark}
  As pointed out in \cite[Remark 5.10]{Toda2013b}, for \(0 < \epsilon \ll 1\), \(f^{*} \omega - \epsilon D\) is ample. So we would like to see if \((\cA_{f^{*} \omega}, Z_{f^{*} \omega})\) is a limit of geometric stability condition \((\cA_{f^{*} \omega - \epsilon D}, Z_{f^{*} \omega - \epsilon D})\). However, there are some gaps to be filled. The main problem is that we don't know any case where the \((\cA_{f^{*} \omega - \epsilon D}, Z_{f^{*} \omega - \epsilon D})\) is a stability condition. In \cite{BMS+2017}, the stability conditions in the case of blowing up of a point in \(\PP^{3}\) and blowing up of a line in \(\PP^{3}\) are considered. However, in the former case, the authors only prove some variant of the generalized Bogomolov inequality in with \(H' = 2 f^{*}H - D\), where \(H\) is the canonical polarization on \(\PP^{3}\). In the latter case, the generalized Bogomolov inequality was shown only for \(H' = a f^{*} H - b D\), where \(a, b \geq 0\) and \(a \leq b\), which also don't satisfy our needs.
\end{remark}

\begin{remark}
  We expect the similar situation occurs in higher dimensional case. However, there are no general result about the (geometric) stability conditions in higher dimensional case. So we limited ourselves to surface and threefold case.
\end{remark}

\printbibliography

\end{document}